\definecolor{rp}{rgb}{0.25, 0, 0.75}
\definecolor{dg}{rgb}{0, 0.5, 0}
\newcommand{\der}{\delta}
\newcommand{\id}{\mbox{Id}}
\newcommand{\ist}{\int_{s}^{t}}
\newcommand{\norm}[1]{\lVert #1\rVert}
\newcommand{\ott}{[0,T]}
\newcommand{\cb}{{\mathcal B}}
\newcommand{\cac}{{\mathcal C}}
\newcommand{\cf}{{\mathcal F}}
\newcommand{\ch}{{\mathcal H}}
\newcommand{\cj}{{\mathcal J}}
\newcommand{\cn}{{\mathcal N}}
\newcommand{\cz}{{\mathcal Z}}
\newcommand{\al}{\alpha}
\newcommand{\ga}{\gamma}
\newcommand{\ep}{\varepsilon}
\newcommand{\si}{\sigma}
\newcommand{\laa}{\Lambda}
\newcommand{\R}{{\mathbb R}}
\newcommand{\lcl}{\left\{}
\newcommand{\rcl}{\right\}}
\newcommand{\lp}{\left(}
\newcommand{\rp}{\right)}
\newcommand{\lc}{\left[}
\newcommand{\rc}{\right]}
\newtheorem{thm}{Theorem}[section]
\newtheorem{cor}[thm]{Corollary}
\newtheorem{lem}[thm]{Lemma}
\newtheorem{prop}[thm]{Proposition}
\newtheorem{defn}{Definition}[section]
\newtheorem{hyp}{Hypothesis}[section]
\theoremstyle{remark}
\newtheorem{rem}{Remark}[section]
\newcommand{\Ind}{\;{\rm l}\hskip -0.23truecm 1}
\begin{document}

\title[Fractional differential systems]{Some differential systems driven by a fBm with Hurst parameter greater than 1/4}

\author{Samy Tindel \and Iván Torrecilla}

\address{
{\it Samy Tindel:} {\rm Institut {\'E}lie Cartan Nancy, B.P. 239,
54506 Vand{\oe}uvre-l{\`e}s-Nancy Cedex, France}. {\it Email:}{\tt
tindel@iecn.u-nancy.fr}
\newline
$\mbox{ }$\hspace{0.1cm} {\it Iván Torrecilla:} {\rm Facultat de
Matem\`atiques, Universitat de Barcelona, Gran Via 585, 08007
Barcelona, Spain}. {\it Email: }{\tt
itorrecillatarantino@gmail.com}. {\it Supported by the grants
HF2005-0038, MTM2006-01351 from the Direcci\'on General de
Investigaci\'on, Ministerio de Educación y Ciencia, Spain.} }

\begin{abstract}
This note is devoted to show how to push forward the algebraic integration setting in order to treat differential systems driven by a noisy input with Hölder regularity greater than $1/4$. After recalling how to treat the case of ordinary stochastic differential equations, we mainly focus on the case of delay equations. A careful analysis is then performed in order to show that a fractional Brownian motion  with Hurst parameter $H>1/4$ fulfills the assumptions of our abstract theorems.
\end{abstract}

\keywords{Rough paths theory; Stochastic delay equations; Fractional Brownian motion.}

\subjclass[2000]{60H05, 60H07, 60G15}

\date{\today}
\maketitle

\section{Introduction}
A differential equation driven by a $d$-dimensional fractional Brownian motion $B=(B^1,\ldots,$ $B^d)$ is generically written as:
\begin{equation}
\label{eq:sde-intro}
y_t=a+ \int_0^t \sigma(y_s) \, dB_s,\quad t\in[0,T],
\end{equation}
where $a$ is an initial condition in $\R^n$, $\si:\R^n\to\R^{n,d}$ is a smooth enough function, and $T$ is an arbitrary positive constant. The recent developments in rough paths analysis \cite{CQ02,LyQi02,FV08} have allowed to solve this kind of differential equation when the Hurst parameter $H$ of the fractional Brownian motion is greater than 1/4, by first giving a natural meaning to the integral  $\int_0^t \sigma(y_s) \, dB_s$ above. It should also be stressed that a great amount of information has been obtained about these systems, ranging from support theorems \cite{Fr05} to the existence of a density for the law of $y_t$ at a fixed instant $t$ (see \cite{CF07,CFV07}).

\smallskip

In a parallel but somewhat different direction, the algebraic
integration theory (introduced in \cite{Gu04}), is meant as an
alternative and complementary method of generalized integration
with respect to a rough path. It relies on some more elementary
and explicit formulae, and its main advantage is that it allows to
develop rather easily an intuition about the way to handle
differential systems beyond the diffusion case given by
(\ref{eq:sde-intro}). This fact is illustrated by the study of
delay \cite{NNT07} and Volterra \cite{DT08} type equations, as
well as an attempt to handle partial differential equations driven
by a rough path \cite{GT08}. In each of those cases, the main
underlying idea consists in changing slightly the basic structures
allowing a generalized integration theory (discrete differential
operator $\delta$, sewing map $\Lambda$, controlled processes) in
order to adapt them to the context under consideration. While the
technical details might be long and tedious, let us insist on the
fact that the changes in the structures we have alluded to are
always natural and (almost) straightforward. Some twisted L\'evy
areas also enter into the game in a natural manner.

\smallskip

However, all the results contained in the references mentioned
above concern a fractional Brownian motion $B$ with Hurst
parameter $H>1/3$, while the usual rough path theory enables to
handle any $H>1/4$  (see \cite{CQ02} for the explicit application
to fBm). The current paper can then be seen as a step in order to
fill this gap, and we shall deal mainly with two kind of systems:
first of all, we will show how to solve equation
(\ref{eq:sde-intro}) when $1/4<H\leq 1/3$, thanks to the algebraic
integration theory. The results we will obtain are not new, and
the algebraic integration formalism has been extended to a much
broader context in \cite{Gu06} by means of a tree-based expansion
(let us mention again that the case $H>1/4$ is also covered by the
usual rough path theory). This study is thus included here as a
preliminary step, where the changes in the structures (new
definition of a controlled path, introduction of a L\'evy
\emph{volume}) can be  exhibited in a simple enough manner.

\smallskip

Then, in a second part of the paper, we show how to adapt our formalism in order to deal with delay equations of the form:
\begin{equation}
\label{eq:intro-def-delay-eq} \left\{
\begin{array}{ll}
dy_t=\sigma(y_t,y_{t-r_1},\ldots,y_{t-r_{q}}) \, dB_t\quad  t\in[0,T],    \\
y_t=\xi_t,  \qquad  t \in [-r_{q},0],
\end{array}
\right.
\end{equation}
where $y$ is a $\R^n$-valued continuous process, $q$ is a positive
integer, $\si:\R^{n,q+1}\to\R^{n,d}$ is a smooth enough function,
$B$ is a $d$-dimensional fractional Brownian motion with Hurst
parameter $H>1/4$ and $T$ is an arbitrary positive constant. The
delay in our equation is represented by the family
$0<r_1<\ldots<r_{q}<\infty$, and the initial condition $\xi$ is
taken as a regular enough deterministic function on $[-r_{q},0]$.
Though this kind of system is implicitly considered in \cite{Ho06}
in the usual Brownian case, and in \cite{FR06} for a Hurst
parameter $H>1/2$, the rough paths techniques have only been used
in this context (to the best of our knowledge) in \cite{NNT07},
where a delay equation driven by a fractional Brownian motion with
Hurst parameter $H>1/3$ is considered. Our paper is thus an
extension of this last result, and we shall obtain an existence
and uniqueness theorem for equation (\ref{eq:intro-def-delay-eq})
in the case $H>1/4$, under reasonable regularity conditions on
$\si$ and $\xi$.

\smallskip

From our point of view the example of delay equations, which is interesting in its own right because of its potential physical applications, is also worth studying in order to see the kind of algebraical structures which pop out when changing the type of rough differential system we are trying to handle. In case of a delay equation driven by a rough path of order 3 like ours, we shall introduce the notion of \emph{doubly delayed} controlled processes, and have to assume a priori the existence of some \emph{doubly delayed} elements of area and volume associated to $B$. This rich structure induces some cumbersome computations when one decides to expand all the calculations explicitly like we did. However, in the end, one also gets the satisfaction to see that the algebraic integration setting is flexible enough to be adapted naturally to many situations. Let us also mention that the infinite dimensional setting of \cite{Mo84} is avoided here, and that all our considerations only involve paths taking values in a finite dimensional space.

\smallskip

Let us also mention that, as in other examples of fractional differential systems, an important part of our work consists in verifying that the fractional Brownian motion satisfies the assumptions of our abstract theorems. The main available tools we are aware of for this kind of task are based on Russo-Vallois approximations \cite{RV93}, analytic approximations of the fBm (like we did in \cite{DT08}) or Malliavin calculus. We have chosen here to work under this latter framework, since it leads to reasonably short calculations, and also because it allows us to build on the previous results obtained in \cite{NNT07}, where this formalism was also adopted.

\smallskip

Here is how our article is structured: Section \ref{s2} is devoted to recall the basic ingredients of the algebraic integration setting. The diffusion case is treated at Section \ref{sec:diff-case}, and the bulk of the computations concerning delay systems can be found at Section \ref{sec:delay}. Finally, the application to fractional Brownian motion is given at Section \ref{sec:fbm}.

\section{Increments}
\label{s2}

To  begin with, let us present the very basic algebraic structures which
will allow to define a pathwise integral with respect to
irregular functions.

\subsection{Basic notions of algebraic integration}\label{sec:basics-alg-integration}

For an arbitrary real number
$T>0$, a vector space $V$ and an integer $k\ge 1$ we denote by
$\cac_k(V)$ the set of functions $g : [0,T]^{k} \to V$, $g(t_1,\ldots,t_k)= g_{t_1 \ldots t_{k}}$ such
that $g_{t_1 \cdots t_{k}} = 0$ whenever $t_i = t_{i+1}$ for some $1\le i\le k-1$.
Such a function will be called a
\emph{$(k-1)$-increment}, and we will
set $\cac_*(V)=\cup_{k\ge 1}\cac_k(V)$.

\smallskip

On $\cac_k(V)$ we introduce the operator $\der$ defined as follows:
\begin{equation}
  \label{f2.1}
\delta : \cac_k(V) \to \cac_{k+1}(V), \qquad
(\delta g)_{t_1 \cdots t_{k+1}} = \sum_{i=1}^{k+1} (-1)^{k-i}
g_{t_1  \cdots \hat t_i \cdots t_{k+1}} ,
\end{equation}
where $\hat t_i$ means that this particular argument is omitted.
A fundamental property of $\der$, which is easily verified,
is that
$\delta\circ \delta = 0$. We will denote $\cz\cac_k(V) = \cac_k(V) \cap \text{Ker}\delta$
and $\cb \cac_k(V) =
\cac_k(V) \cap \text{Im}\delta$.

\smallskip

Throughout the paper we will mainly deal with actions of $\delta$ on $\cac_i$, $i=1,2$. That is, consider
$g\in\cac_1$ and $h\in\cac_2$. Then, for any $s,u,t\in\ott$, we have
\begin{equation}
\label{f2.2}
  (\der g)_{st} = g_t - g_s,
\quad\mbox{ and }\quad
(\der h)_{sut} = h_{st}-h_{su}-h_{ut}.
\end{equation}
Furthermore, it is easily checked that
$\cz \cac_{k+1}(V) = \cb \cac_{k}(V)$ for any $k\ge 1$.
In particular, we have the following property:
\begin{lem}
\label{l2.1}Let $k\ge 1$ and $h\in \cz\cac_{k+1}(V)$. There exists a (non unique)
$f\in\cac_{k}(V)$ such that $h=\der f$.
\end{lem}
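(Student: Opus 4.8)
The claim to prove is Lemma~\ref{l2.1}: if $h \in \cz\cac_{k+1}(V)$ (i.e. $\der h = 0$), there exists $f \in \cac_k(V)$ with $h = \der f$. The statement just before it asserts $\cz\cac_{k+1}(V) = \cb\cac_k(V)$, so this lemma is essentially the content of that exactness claim, restated in usable form. The key point is that $\der$ restricted to these particular function spaces has no cohomology: every cocycle is a coboundary. Since we are allowed to invoke the preceding displayed assertion $\cz\cac_{k+1}(V) = \cb\cac_k(V)$, the lemma is immediate — any $h \in \cz\cac_{k+1}(V) = \cb\cac_k(V) = \der(\cac_k(V))$ is of the form $\der f$. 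But presumably one wants an honest proof of that exactness, so I will give the explicit construction.

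\medskip

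The plan is to write down an explicit primitive. Fix an arbitrary point $a \in [0,T]$ (say $a = 0$) and, for $h \in \cac_{k+1}(V)$, define $f \in \cac_k(V)$ by
\[
f_{t_1 \cdots t_k} = h_{a\, t_1 \cdots t_k}.
\]
One must first check $f \in \cac_k(V)$: if $t_i = t_{i+1}$ for some $1 \le i \le k-1$, then the corresponding two adjacent arguments of $h$ coincide, so $h_{a\,t_1\cdots t_k} = 0$; one also needs the case $t_1 = a$ handled, but $h_{a a t_2 \cdots} = 0$ as well. Then compute $\der f$ directly from \eqref{f2.1}. Writing $(\der f)_{t_1 \cdots t_{k+1}} = \sum_{i=1}^{k+1} (-1)^{k-i} f_{t_1 \cdots \hat t_i \cdots t_{k+1}} = \sum_{i=1}^{k+1} (-1)^{k-i} h_{a\, t_1 \cdots \hat t_i \cdots t_{k+1}}$, one recognizes the right-hand side as, up to the leading term, the expansion of $(\der h)_{a\, t_1 \cdots t_{k+1}}$. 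Precisely, since $h$ is a $(k+1)$-increment, $\der h \in \cac_{k+2}$ and
\[
(\der h)_{a\, t_1 \cdots t_{k+1}} = \sum_{j=0}^{k+1} (-1)^{k+1-j} h_{a\, t_1 \cdots \widehat{(\cdot)_j} \cdots t_{k+1}},
\]
where the $j=0$ term omits $a$ and equals $(-1)^{k+1} h_{t_1 \cdots t_{k+1}}$, and the $j \ge 1$ terms omit $t_j$. Matching signs carefully (the shift in index parity between the two sums), the hypothesis $(\der h)_{a\, t_1 \cdots t_{k+1}} = 0$ rearranges exactly to $(\der f)_{t_1 \cdots t_{k+1}} = h_{t_1 \cdots t_{k+1}}$, which is the desired identity $h = \der f$.

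\medskip

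The only genuinely delicate point is bookkeeping the signs and the index relabelling between the $(k+2)$-term expansion of $\der h$ and the $(k+1)$-term expansion of $\der f$ — in particular verifying that the extra term in $\der h$ (the one obtained by deleting the fixed argument $a$) is precisely $\pm h_{t_1 \cdots t_{k+1}}$ with the correct sign so that it is not absorbed or cancelled. This is the step I expect to require the most care, though it is entirely mechanical. For $k = 1$ one can sanity-check against \eqref{f2.2}: with $h \in \cz\cac_2$, i.e. $h_{st} - h_{su} - h_{ut} = 0$ for all $s,u,t$ (note the sign convention here), setting $f_t = h_{0t}$ gives $(\der f)_{st} = f_t - f_s = h_{0t} - h_{0s}$, and the cocycle relation $h_{0t} = h_{0s} + h_{st}$ yields $(\der f)_{st} = h_{st}$, as wanted. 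The non-uniqueness asserted in the statement is clear since $f$ is only determined up to an element of $\cz\cac_k(V) = \cb\cac_{k-1}(V)$ (for $k \ge 2$; for $k=1$ up to a constant), and different choices of the base point $a$ yield different such $f$.
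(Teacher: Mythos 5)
The paper does not actually give a proof of Lemma~\ref{l2.1}: it is presented as an immediate consequence of the displayed assertion that $\cz\cac_{k+1}(V)=\cb\cac_k(V)$, which is stated as ``easily checked.'' Your explicit base-point construction $f_{t_1\cdots t_k}=h_{a\,t_1\cdots t_k}$ is the standard and correct way to verify that exactness, so the approach is the right one and fills in exactly what the paper elides.

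However, the sign bookkeeping you explicitly deferred does not come out the way you assert. Using the convention \eqref{f2.1} with $k$ replaced by $k+1$ (since $h\in\cac_{k+1}$), one finds
\[
(\der h)_{a\,t_1\cdots t_{k+1}}
=(-1)^{k}\,h_{t_1\cdots t_{k+1}}+\sum_{i=1}^{k+1}(-1)^{k-i}\,h_{a\,t_1\cdots\hat t_i\cdots t_{k+1}}
=(-1)^{k}\,h_{t_1\cdots t_{k+1}}+(\der f)_{t_1\cdots t_{k+1}},
\]
so the cocycle condition $\der h=0$ gives $\der f=(-1)^{k+1}h$, not $\der f=h$. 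Your sanity check at $k=1$ passes only because $(-1)^{k+1}=1$ there; for even $k$ your proposed $f$ yields $-h$. The fix is trivial --- take $f_{t_1\cdots t_k}=(-1)^{k+1}h_{a\,t_1\cdots t_k}$ --- but since this is precisely the step you flagged as the one requiring care, you should carry it out rather than assert the conclusion. The rest (the check that $f\in\cac_k(V)$, the non-uniqueness remark) is correct.
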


Lemma \ref{l2.1} implies that all the elements
$h \in\cac_2(V)$ such that $\der h= 0$ can be written as $h = \der f$
for some (non unique) $f \in \cac_1(V)$. Thus we have a heuristic
interpretation of $\der |_{\cac_2(V)}$ as a measure of how much a
given 1-increment  is far from being an  exact increment of a
function, i.e., a finite difference.
\begin{rem}
\label{r2.1}Here is a first elementary but important link between these algebraic structures
and integration theory. Let $f$ and $g$ be two smooth real valued functions on $\ott$.
Define $I\in\cac_2$ by
$$
I_{st}=\ist \lp\int_{s}^{v} dg_w\rp df_v ,
\quad\mbox{ for }\quad
s,t\in\ott.
$$
Then,
$(\der I)_{sut}=[g_u-g_s] [f_t-f_u]=(\der g)_{su} (\der f)_{ut}$.
Hence we see that the operator $\der$ transforms iterated integrals
into products of increments, and we will be able to take advantage of both regularities
of $f$ and $g$ in these products of the form $\der g\, \der f$.
\end{rem}

\vspace{0.3cm}

Let us concentrate now on the case $V=\R^d$, and
notice that our future discussions will mainly rely on
$k$-increments with $k \le 2$, for which we will use some
analytical assumptions. Namely,
we measure the size of these increments by H\"older-type norms
defined in the following way. For $f \in \cac_2(V)$ and $\mu\in(0,\infty)$, let
\begin{equation}
\label{eq:2.4}
\norm{f}_{\mu} =
\sup_{s,t\in\ott}\frac{|f_{st}|}{|t-s|^\mu},
\end{equation}
and set $\cac_2^\mu(V)=\lcl f \in \cac_2(V);\, \norm{f}_{\mu}<\infty  \rcl$.

The usual H\"older spaces $\cac_1^\mu(V)$ will be determined
in the following way. For a continuous function $g\in\cac_1(V)$, we simply set
\begin{equation}\label{hnorm}
\|g\|_{\mu}=\|\der g\|_{\mu},
\end{equation}
where the right-hand side of this equality is defined after (\ref{eq:2.4});
we will say that $g\in\cac_1^\mu(V)$ iff $\|g\|_{\mu}$ is finite.
Notice that $\|\cdot\|_{\mu}$ is only a semi-norm on $\cac_1(V)$.
However we will generally work on spaces of the type
\begin{equation*}
\cac_{1,a}^\mu(V)=
\lcl g:\ott\to V;\, g_0=a,\, \|g\|_{\mu}<\infty \rcl,
\end{equation*}
for a given $a\in V$, on which $\|g\|_{\mu}$ then becomes a norm.
 For $h \in \cac_3(V)$ we set 
\begin{eqnarray*}
  \norm{h}_{\gamma,\rho} &=& \sup_{s,u,t\in\ott}
\frac{|h_{sut}|}{|u-s|^\gamma |t-u|^\rho}\\
\|h\|_\mu &= &
\inf\left \{\sum_i \|h_i\|_{\rho_i,\mu-\rho_i} ;\, h =
 \sum_i h_i,\, 0 < \rho_i < \mu \right\} ,\nonumber
\end{eqnarray*}
where the last infimum is taken over all sequences $\{h_i \in \cac_3(V) \}$
such that $h
= \sum_i h_i$. 
Then  $\|\cdot\|_\mu$ is easily seen to be a norm on $\cac_3(V)$, and we set
$$
\cac_3^\mu(V):=\lcl h\in\cac_3(V);\, \|h\|_\mu<\infty \rcl.
$$
Eventually,
let $\cac_j^{1+}(V) = \cup_{\mu > 1} \cac_j^\mu(V)$, $j=1,2,3$,
and remark that the same kind of norms can be considered on the
spaces $\cz \cac_3(V)$, leading to the definition of some spaces
$\cz \cac_3^\mu(V)$ and $\cz \cac_3^{1+}(V)$.
\vspace{0.3cm}

With these notations in mind,
the crucial point in our approach to pathwise integration of irregular
processes is that, under mild smoothness conditions, the operator
$\delta$ can be inverted. This inverse is called $\laa$, and is
defined in the following  proposition, whose proof can be found in \cite{Gu04,GT08}:
\begin{prop}
\label{p2.1}
There exists a unique linear map $\Lambda: \cz \cac^{1+}_3(V)
\to \cac_2^{1+}(V)$ such that
$$
\delta \Lambda  = \id_{\cz \cac_3^{1+}(V)}
\quad \mbox{ and } \quad \quad
\Lambda  \delta= \id_{\cac_2^{1+}(V)}.
$$
In other words, for any $h\in\cac^{1+}_3(V)$ such that $\der h=0$
there exists a unique $g=\laa(h)\in\cac_2^{1+}(V)$ such that $\der g=h$.
Furthermore, for any $\mu > 1$,
the map $\laa$ is continuous from $\cz \cac^{\mu}_3(V)$
to $\cac_2^{\mu}(V)$ and we have
\begin{equation*}
\|\Lambda h\|_{\mu} \le \frac{1}{2^\mu-2} \|h\|_{\mu} ,\qquad h \in
\cz \cac^{\mu}_3(V).
\end{equation*}
\end{prop}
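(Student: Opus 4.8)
The plan is to follow the ``sewing'' scheme that underlies this kind of statement: prove uniqueness first, then construct $\Lambda h$ by hand as the limit of a family of Riemann-type sums built out of $h$, and finally read the continuity estimate off that construction. Throughout I fix $\mu>1$ and work with $h\in\cz\cac_3^\mu(V)$, the general case $h\in\cz\cac_3^{1+}(V)$ following from $\cac_3^{1+}(V)=\cup_{\mu>1}\cac_3^\mu(V)$. For uniqueness, suppose $g_1,g_2\in\cac_2^{1+}(V)$ both satisfy $\delta g_i=h$. Then $\delta(g_1-g_2)=0$, so by the identity $\cz\cac_2(V)=\cb\cac_1(V)$ one may write $g_1-g_2=\delta f$ with $f\in\cac_1(V)$, for instance $f_t=(g_1-g_2)_{0t}$; since $g_1-g_2\in\cac_2^\mu(V)$ we get $|f_t-f_s|=|(g_1-g_2)_{st}|\le\norm{g_1-g_2}_\mu|t-s|^\mu$ with $\mu>1$, which forces $f$ to be constant, hence $g_1=g_2$. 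Thus each $h$ has at most one preimage in $\cac_2^{1+}(V)$, so $\Lambda$---once its existence is established---is well defined; linearity is then automatic, since $\Lambda h_1+\al\,\Lambda h_2$ lies in $\cac_2^{1+}(V)$ and $\delta$ sends it to $h_1+\al h_2$.

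To construct $\Lambda h$, I would attach to every partition $\Pi=\{s=t_0<\cdots<t_n=t\}$ of $[s,t]$ the sum $G^\Pi_{st}=\sum_{i=0}^{n-1}h_{s\,t_i\,t_{i+1}}$, which vanishes for the trivial partition $\{s,t\}$ because $h_{sst}=0$. Writing the closure relation $\delta h=0$ as $h_{s\,a\,c}=h_{s\,a\,b}+h_{s\,b\,c}-h_{a\,b\,c}$, one sees that inserting a single point $b\in(t_j,t_{j+1})$ into $\Pi$ changes $G^\Pi_{st}$ by exactly $h_{t_j\,b\,t_{j+1}}$, of modulus at most $\norm{h}_\mu|t_{j+1}-t_j|^\mu$ (here $|h_{uvw}|\le\norm{h}_\mu|w-u|^\mu$ follows from $|v-u|,|w-v|\le|w-u|$ in the infimum defining $\norm{h}_\mu$). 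Dually, deleting from a partition the interior point that minimises the ``gap'' $|t_{j+1}-t_{j-1}|$ costs at most $\norm{h}_\mu\bigl(2(t-s)/p\bigr)^\mu$ when $p$ interior points remain, thanks to the pigeonhole bound $\min_j|t_{j+1}-t_{j-1}|\le 2(t-s)/p$. Applying this coarsening separately inside each sub-interval of $\Pi$, and using $\sum_{l\ge1}l^{-\mu}<\infty$, gives $|G^{\Pi'}_{st}-G^\Pi_{st}|\le C_\mu\norm{h}_\mu|\Pi|^{\mu-1}(t-s)$ whenever $\Pi'$ refines $\Pi$; comparing two arbitrary partitions through their common refinement then shows $(G^\Pi_{st})$ is Cauchy as the mesh $|\Pi|\to 0$, and I would define $(\Lambda h)_{st}$ as this limit, which is independent of the chosen sequence of partitions. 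Running the same estimate along the dyadic partitions---where the $2^n$ points added at step $n$ are \emph{midpoints} of intervals of length $2^{-n}(t-s)$, so that the total change at that step is at most $2^n\cdot 2^{-\mu}\norm{h}_\mu(2^{-n}(t-s))^\mu=2^{-\mu}\norm{h}_\mu(t-s)^\mu\,2^{n(1-\mu)}$---and summing the geometric series of ratio $2^{1-\mu}$, one gets $|(\Lambda h)_{st}|\le\frac{1}{2^\mu-2}\norm{h}_\mu|t-s|^\mu$, i.e. $\Lambda h\in\cac_2^\mu(V)$ together with the asserted bound, hence the continuity of $\Lambda:\cz\cac_3^\mu(V)\to\cac_2^\mu(V)$.

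It remains to check the identities $\delta\Lambda=\id$ and $\Lambda\delta=\id$. For the first, given $s<u<t$ I would evaluate $G^\Pi_{st}$ on a partition of $[s,t]$ that contains $u$, split it as $\Pi^{(1)}\cup\Pi^{(2)}$ with $\Pi^{(1)}$ a partition of $[s,u]$ and $\Pi^{(2)}$ one of $[u,t]$, and rewrite each term $h_{s\,t_j\,t_{j+1}}$ with $t_j,t_{j+1}\in[u,t]$ via $\delta h=0$ as $h_{u\,t_j\,t_{j+1}}+h_{s\,u\,t_{j+1}}-h_{s\,u\,t_j}$; after telescoping this collapses to $G^\Pi_{st}=G^{\Pi^{(1)}}_{su}+G^{\Pi^{(2)}}_{ut}+h_{sut}$, and passing to the limit yields $(\delta\Lambda h)_{sut}=h_{sut}$, i.e. $\delta\Lambda=\id_{\cz\cac_3^{1+}(V)}$. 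For the second, $\delta g$ is $\delta$-closed and, for $g\in\cac_2^{1+}(V)$, inherits enough regularity to belong to $\cz\cac_3^{1+}(V)$ (so that $\Lambda\delta g$ is defined); then $\Lambda\delta g$ and $g$ are two elements of $\cac_2^{1+}(V)$ with the same image under $\delta$, so they coincide by the uniqueness step.

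The heart of the matter---and the only step I expect to be genuinely delicate---is the convergence of the sums $G^\Pi_{st}$ and the independence of the limit on the partition: this is exactly the Young/sewing mechanism, entirely driven by the relation $\delta h=0$. Extracting the sharp constant $\frac{1}{2^\mu-2}$ rather than a crude one requires, in addition, keeping track of the factor $2^{-\mu}$ gained from the fact that along dyadic refinements the inserted point sits at the midpoint of the interval being split; everything else---uniqueness, linearity, the two identities, and the $\delta$-mapping property---is routine once this mechanism is in place.
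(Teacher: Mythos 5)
Your proof is correct and reproduces, in all essentials, the sewing-lemma argument underlying Gubinelli's proof, to which the paper defers for this statement: uniqueness via the observation that a H\"older exponent $\mu>1$ forces constancy; existence via the Riemann-type sums $G^\Pi_{st}=\sum_i h_{s\,t_i\,t_{i+1}}$ with the single-point insertion identity $G^{\Pi\cup\{b\}}_{st}-G^\Pi_{st}=h_{t_j\,b\,t_{j+1}}$ driven by $\delta h=0$; Young's pigeonhole coarsening for the Cauchy property; the dyadic-midpoint computation yielding exactly $\frac{1}{2^\mu-2}$; and the Chasles identity $G^\Pi_{st}=G^{\Pi^{(1)}}_{su}+G^{\Pi^{(2)}}_{ut}+h_{sut}$ giving $\delta\Lambda=\id$.

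The one step that, as written, would fail is the assertion near the end that for $g\in\cac_2^{1+}(V)$ the increment $\delta g$ ``inherits enough regularity to belong to $\cz\cac_3^{1+}(V)$.'' This is not true in general: the $\cac_3^\mu$-norm requires a bound by a sum of products $|u-s|^{\rho_i}|t-u|^{\mu-\rho_i}$, and a merely $\mu$-H\"older $g$ need not produce this product structure in $\delta g$. For instance, with $g_{st}=|t-s|^\mu\sin(1/|t-s|)$ one has $\norm{g}_\mu\le 1$, but evaluating $(\delta g)_{sut}$ at triples with $u-s=b^2$, $t-u=b$ and $1/b=(2k+1)\pi$ gives a quantity of order $b^\mu$, whereas any admissible decomposition would bound it by $\sum_i K_i\,b^{2\rho_i}\,b^{\mu-\rho_i}=b^\mu\sum_i K_i\,b^{\rho_i}$ with $\rho_i>0$ and $\sum_i K_i<\infty$, which tends to $0$ relative to $b^\mu$ as $b\to 0$. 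So $\Lambda\delta$ is not defined on all of $\cac_2^{1+}(V)$; the identity $\Lambda\delta=\id$ is to be understood on $\{g\in\cac_2^{1+}(V):\delta g\in\cac_3^{1+}(V)\}$, which is all that is ever used. This imprecision is already present in the proposition's own wording (and in Gubinelli's formulation), so it does not undermine your construction of $\Lambda$; but you should not present the inclusion $\delta\bigl(\cac_2^{1+}(V)\bigr)\subset\cac_3^{1+}(V)$ as automatic.
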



It is worth mentioning at this point that
$\laa$ gives raise to a kind of generalized Young integral, which is a second
link between the algebraic structures introduced so far and a theory of generalized
integration:
\begin{cor}
\label{c2.1}
For any 1-increment $g\in\cac_2 (V)$ such that $\der g\in\cac_3^{1+}$,
$$
(\id-\Lambda \delta) g = \lim_{|\Pi_{st}| \to 0} \sum_{i=0}^n g_{t_i\,t_{i+1}},
$$
where the limit is over any partition $\Pi_{st} = \{t_0=s,\dots,
t_n=t\}$ of $[s,t]$, whose mesh tends to zero. Thus by setting $\delta f=(\id-\Lambda \delta) g$, the
1-increment $\delta f$ is the indefinite integral of the 1-increment $g$.
\end{cor}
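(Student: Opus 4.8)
The plan is to exploit Proposition \ref{p2.1} directly, together with the telescoping nature of the sum $\sum_i g_{t_i t_{i+1}}$ under the action of $\delta$. First I would observe that for a partition $\Pi_{st}=\{t_0=s,\dots,t_n=t\}$ the Riemann-type sum $J_{\Pi_{st}}:=\sum_{i=0}^{n-1} g_{t_i t_{i+1}}$ can be rewritten by adding and subtracting $g_{st}$: writing $g_{t_i t_{i+1}}=g_{st_{i+1}}-g_{st_i}-(\delta g)_{s t_i t_{i+1}}$ (which is just the definition (\ref{f2.2}) of $\delta g$ reorganized), the first two terms telescope and one is left with $J_{\Pi_{st}}=g_{st}-\sum_{i=0}^{n-1}(\delta g)_{s t_i t_{i+1}}$. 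Thus the convergence of the Riemann sums is equivalent to the convergence of $\sum_i (\delta g)_{s t_i t_{i+1}}$, and the candidate limit must be $g_{st}$ minus whatever that second sum converges to.

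Next I would identify the limit of $\sum_i(\delta g)_{s t_i t_{i+1}}$ with $(\Lambda\delta g)_{st}$. Since $\delta g\in\cac_3^{1+}$ by hypothesis and $\delta(\delta g)=0$ because $\delta\circ\delta=0$, we have $\delta g\in\cz\cac_3^{1+}(V)$, so $\Lambda(\delta g)$ is well defined and $\delta\Lambda(\delta g)=\delta g$ by Proposition \ref{p2.1}. Set $k:=g-\Lambda\delta g\in\cac_2(V)$; then $\delta k=\delta g-\delta\Lambda\delta g=0$, i.e. $k\in\cz\cac_2(V)$, which by (\ref{f2.2}) means $k_{st}$ is an exact increment $k_{st}=m_t-m_s$ for some $m\in\cac_1(V)$ (this is the $k=1$ case of Lemma \ref{l2.1}, equivalently $\cz\cac_2=\cb\cac_1$). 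For such an exact increment the Riemann sums telescope \emph{exactly}: $\sum_i k_{t_i t_{i+1}}=m_t-m_s=k_{st}$ for every partition, with no limiting procedure needed. Applying the identity from the first paragraph to $k$ in place of $g$ gives $\sum_i(\delta k)_{s t_i t_{i+1}}=0$; but this is vacuous since $\delta k=0$. Instead I apply it to $g$: $J_{\Pi_{st}}=g_{st}-\sum_i(\delta g)_{s t_i t_{i+1}}$, and I must show $\sum_i(\delta g)_{s t_i t_{i+1}}\to(\Lambda\delta g)_{st}$.

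The core estimate is therefore: for $h:=\delta g\in\cz\cac_3^\mu(V)$ with some $\mu>1$, one has $\sum_{i}h_{s t_i t_{i+1}}\to(\Lambda h)_{st}$ as $|\Pi_{st}|\to0$. To see this, write $\Lambda h\in\cac_2^\mu(V)$ and use $\delta(\Lambda h)=h$, so that $h_{s t_i t_{i+1}}=(\delta\Lambda h)_{s t_i t_{i+1}}=(\Lambda h)_{s t_{i+1}}-(\Lambda h)_{s t_i}-(\Lambda h)_{t_i t_{i+1}}$. Summing over $i$, the first two terms telescope to $(\Lambda h)_{st}$, leaving $\sum_i h_{s t_i t_{i+1}}=(\Lambda h)_{st}-\sum_i(\Lambda h)_{t_i t_{i+1}}$. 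The remainder is controlled by $|\sum_i(\Lambda h)_{t_i t_{i+1}}|\le\|\Lambda h\|_\mu\sum_i|t_{i+1}-t_i|^\mu\le\|\Lambda h\|_\mu\,|t-s|\,|\Pi_{st}|^{\mu-1}\to0$ since $\mu>1$. Hence $J_{\Pi_{st}}\to g_{st}-(\Lambda h)_{st}=(g-\Lambda\delta g)_{st}=((\id-\Lambda\delta)g)_{st}$, which is the claimed formula. Finally, setting $\delta f:=(\id-\Lambda\delta)g$, the fact that $\delta(\delta f)=\delta g-\delta\Lambda\delta g=0$ confirms $\delta f\in\cz\cac_2(V)=\cb\cac_1(V)$, so $\delta f$ is indeed an exact increment — the indefinite integral of $g$.

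The only genuinely delicate point is the remainder estimate: one needs $\delta g$ to lie in a space $\cac_3^\mu$ with $\mu>1$ \emph{strictly}, so that $|\Pi_{st}|^{\mu-1}\to0$; the hypothesis $\delta g\in\cac_3^{1+}$ is exactly what guarantees this. Everything else is bookkeeping with the telescoping identities and the defining relations $\delta\Lambda=\id$, $\Lambda\delta=\id$ from Proposition \ref{p2.1}.
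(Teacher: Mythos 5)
Your proof is correct, and since the paper states this corollary without giving its own proof, there is nothing to compare line by line; your argument -- rewriting $g_{t_i t_{i+1}}$ via the definition of $\delta$, telescoping, identifying the remaining sum with $(\Lambda\delta g)_{st}$ through $\delta\Lambda=\id$, and killing the remainder with the bound $\|\Lambda\delta g\|_\mu\,|t-s|\,|\Pi_{st}|^{\mu-1}\to0$ for $\mu>1$ -- is exactly the standard one from the algebraic-integration literature this paper builds on. One small expository remark: after introducing $k=g-\Lambda\delta g\in\cz\cac_2(V)$ you abandon it, but it gives the shortest route: Riemann sums of $k$ telescope exactly to $k_{st}$, and $\sum_i g_{t_i t_{i+1}} = k_{st} + \sum_i(\Lambda\delta g)_{t_i t_{i+1}}$ with the second sum controlled by $\|\Lambda\delta g\|_\mu\,|t-s|\,|\Pi_{st}|^{\mu-1}$, avoiding the second layer of telescoping you perform in the third paragraph.
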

We can now explain heuristically how our generalized integral will be defined.
\begin{rem}
\label{r2.2}
Let $f$ and $g$ be two real valued smooth functions, and define $I\in\cac_2$
like in Remark \ref{r2.1}. Thanks to this remark and Proposition \ref{p2.1}, the following decomposition-recomposition
for $I=\int df\int dg$ holds true:
$$
\mbox{$\int dg\int df$ }\stackrel{\der}{\longrightarrow}
(\der g)\, (\der f)
\stackrel{\laa}{\longrightarrow}
\mbox{$\int dg\int df$ },
$$
where for the second step of this construction, we have only used the fact that the
product of increments $(\der g)\, (\der f)$, considered as an element of $\cz\cac_3$,
is smooth enough. This simple procedure allows then to extend the notion of iterated
integral to a non-smooth situation, by just applying the operator $\laa$ to
$(\der g)\, (\der f)$ whenever we are allowed to do it.
\end{rem}

\subsection{Some further notations}\label{sec:notations}
We summarize in this section some of the notation which will be
used throughout the paper.

\smallskip

A multilinear operator $A$ of order $l$, from
$\R^{d_1}\times\ldots\times\R^{d_l}$ to $\R^n$, is denoted as an
element $A\in\R^{n,d_1,\ldots,d_l}$. In order to avoid tricky
matrix notations, we have decided to expand all our computations
in coordinates, and use Einstein's convention on summations over
repeated indices. Notice that we will also use the notation
$A\in\R^{d_1,d_2,d_3,d_4}$ for a linear operator from
$\R^{d_3,d_4}$ to $\R^{d_1,d_2}$. We hope that this convention
won't lead to any ambiguity. The transposed of a matrix $M\in\R^{d_1,d_2}$ is written as $M^*$.

\smallskip

For a function $\varphi:(\R^n)^{q+1}\to\R$, we denote by $\partial_i^j\varphi(w_0,w_1,\ldots,w_q)$ the derivative of $\varphi$ with respect to the $i\textsuperscript{th}$ component of $w_j$, for $i\le n$ and $j=0,\ldots,q$.

\smallskip

We shall meet two kind of products of increments: first, for  $g\in\cac_n(\R^{l,d})$ and $h\in\cac_m(\R^d) $ we set $gh$ for the element of $\cac_{n+m-1}(\R^l)$ defined by
\begin{equation}\label{eq:convention-prod1}
(gh)_{t_1,\dots,t_{m+n-1}}= g_{t_1,\dots,t_{n}}
h_{t_{n},\dots,t_{m+n-1}}, \quad t_1,\dots,t_{m+n-1}\in\ott.
\end{equation}
If now $g\in\cac_n(\R^{l,d})$ and $h\in\cac_n(\R^d) $ we set $g\cdot h$ for the element of $\cac_{n}(\R^l)$ defined by
\begin{equation}\label{eq:convention-prod2}
(g \cdot h)_{t_1,\dots,t_{n}}=
g_{t_1,\dots,t_{n}} h_{t_{1},\dots,t_{n}},
\quad
t_1,\dots,t_{n}\in\ott.
\end{equation}

\smallskip

In order to avoid ambiguities, we shall denote by $\cn[f;\, \cac_j^\kappa]$ the $\kappa$-Hölder norm on the space $\cac_j$, for $j=1,2,3$. For $\zeta\in\cac_1(V)$, we also set $\mathcal{N}[\zeta;\mathcal{C}_{1}^\infty(V)]=\sup_{0\leq s \leq
T}|\zeta^i|_{V}$.

\smallskip

The integral of a real valued function $f$ with respect to another real valued function $g$, when properly defined, is written indistinctly as $\int f dg$ or $\cj(f  dg)$.

\section{The diffusion case}\label{sec:diff-case}

In this section, we will recall the basic steps which allow to define rigorously and solve an equation of the form:
\begin{equation}
\label{eq:sde-diffusion}
y_t=a+ \int_0^t \sigma(y_s) \, dx_s,\quad t\in[0,T],
\end{equation}
where $a$ is an initial condition in $\R^n$, $\si:\R^n\to\R^{n,d}$ is a smooth enough function, $T$ is an arbitrary positive constant, and $x$ is a generic $d$-dimensional noisy input with Hölder regularity $\ga>1/4$. In the algebraic integration setting \cite{Gu04,Gu06}, this task amounts to perform the following steps:
\begin{enumerate}
\item
Definition of an incremental operator $\delta$ and its inverse $\Lambda$.
\item
Definition of a suitable notion of controlled processes, and integration of those processes with respect to $x$.
\item
Resolution of the equation thanks to a fixed point procedure in the space of controlled processes.
\end{enumerate}
Having dealt with the first of those points at Section \ref{sec:basics-alg-integration}, we turn now to the second one, that is a definition of a useful notion of controlled processes.

\smallskip

\subsection{Weakly controlled processes}

Before giving the formal definition of a weakly controlled process in the context of equation (\ref{eq:sde-diffusion}), let us recall that when the regularity of the noise is $\ga>1/4$, the rough path setting relies on the a priori existence of an area (resp. volume) element $\mathbf{x^2}$ (resp. $\mathbf{x^3}$) satisfying the so-called Chen's relations:
\begin{hyp}
\label{h1.1} The path $\mathbb{R}^d$-valued $x$ is
$\gamma$-H\"older continuous with $\gamma>1/4$, and admits a L\'evy area and
a \emph{volume element}, that is two increments
$\mathbf{x}^\mathbf{2}\in
\mathcal{C}_{2}^{2\gamma}(\mathbb{R}^{d,d})$ and
$\mathbf{x}^\mathbf{3}\in
\mathcal{C}_2^{3\gamma}(\mathbb{R}^{d,d,d})$ (which represent respectively $\mathcal{J}(dxdx)$ and $\mathcal{J}(dxdxdx)$, with the conventions of Section \ref{sec:notations}) satisfying:
\begin{align*}
\delta \mathbf{x}^\mathbf{2}&=\delta x \otimes \delta x, \quad\text{
i.e. }(\delta (\mathbf{x^2})^{i j})_{sut}=(\delta x^i)_{su}(\delta
x^j)_{ut}\\
\delta \mathbf{x}^\mathbf{3}&=\mathbf{x^2}\otimes \delta x+\delta
x \otimes
\mathbf{x^2}, \quad\text{ i.e. }
(\delta
(\mathbf{x}^\mathbf{3})^{ijk})_{sut}=(\mathbf{x}^\mathbf{2}_{su})^{i
j}(\delta x^k)_{ut}+(\delta
x^i)_{su}(\mathbf{x}^\mathbf{2}_{ut})^{j k},
\end{align*}
for any $s,u,t\in [0,T]$, and any $i,j,k\in\{1,\ldots,d\}$.
\end{hyp}

The geometrical assumption for rough paths (which is satisfied by the fractional Brownian motion in the Stratonovich setting) also states that products of increments should be expressed in terms of iterated integrals:
\begin{hyp}
\label{h1.2} Let $\mathbf{x^2}$ be the area process defined at
Hypothesis \ref{h1.1}, and denote by $\mathbf{x^{2,s}}$ the
symmetric part of $\mathbf{x^2}$, i.e.
$\mathbf{x^{2,s}}=\frac{1}{2}(\mathbf{x^2}+(\mathbf{x^2})^\ast)$.
Then we suppose that for $0\leq s<t\leq T$ we have:
\begin{equation*}
\mathbf{x}^\mathbf{2,s}_{st}=\frac{1}{2}(\delta
x)_{st}\otimes(\delta x)_{st}.
\end{equation*}
\end{hyp}

\smallskip

With these hypotheses in mind, the natural class of processes which will be integrated against $x$ are processes whose increments can be expressed simply enough in terms of the increments of $x$:
\begin{defn}
\label{d1.1} Let $z$ be a process in
$\mathcal{C}_1^\kappa(\mathbb{R}^l)$ with $\kappa\leq \gamma$ and
$3\kappa+\gamma>1$, such that $z_0=a\in\mathbb{R}^l$. We say that $z$ is a weakly controlled path
based on $x$ if $\delta z\in
\mathcal{C}_{2}^\kappa(\mathbb{R}^l)$ can be decomposed into
\begin{equation}\label{1.1}
\delta z^i=(\zeta^1)^{i j}\delta x^j +
(\zeta^2)^{i j k} (\mathbf{x^2})^{k j}+r^i, \
\text{ i.e. } \ (\delta z^i)_{st}=(\zeta_s^1)^{i j}(\delta
x^j)_{st}+ (\zeta^2_{s})^{i j k}(\mathbf{x}^\mathbf{2}_{st})^{k
j}+r^i_{st},
\end{equation}
for any $1\leq i\leq l$, $1\leq j,k \leq d$. In the previous decomposition, we further assume that $\zeta^1\in
\mathcal{C}_1^\kappa(\mathbb{R}^{l,d})$ is a path with a given initial condition $\zeta^1_0=b\in
\mathbb{R}^{l,d}$, such that $\delta \zeta^1 \in
\mathcal{C}_2^\kappa(\mathbb{R}^{l,d})$ can be decomposed itself into:
\begin{equation*}
\delta (\zeta^1)^{i j}=(\zeta^2)^{i j k}\delta x^k+\rho^{i
j},\text{ i.e. }(\delta (\zeta^1)^{i j})_{st}=(\zeta_s^2)^{i j
k}(\delta x^k)_{st}+\rho_{st}^{i j},
\end{equation*}
for all $s,t\in[0,T]$, where $\zeta^2$ is a given path in $\mathcal{C}_{1}^\kappa(\mathbb{R}^{l,d,d})$. Notice also that in the previous equations, $r$ and $\rho$ are understood as regular remainders,  such that $r\in
\mathcal{C}_2^{3\kappa}(\mathbb{R}^l)$ and $\rho\in
\mathcal{C}_2^{2\kappa}(\mathbb{R}^{l,d})$.

\smallskip

The space of weakly controlled paths will be denoted by
$\mathcal{Q}_{\kappa,a,b}(\mathbb{R}^l)$, and a process $z\in
\mathcal{Q}_{\kappa,a,b}(\mathbb{R}^l)$ can be considered in fact
as a triple $(z,\zeta^1,\zeta^2)$. The natural semi-norm on
$\mathcal{Q}_{\kappa,a,b}(\mathbb{R}^l)$ is given by
\begin{align*}
\mathcal{N}[z;\mathcal{Q}_{\kappa,a,b}(\mathbb{R}^l)]&=\mathcal{N}[z;\mathcal{C}_1^\kappa(\mathbb{R}^l)]
+\mathcal{N}[\zeta^1;
\mathcal{C}_1^\infty(\mathbb{R}^{l,d})]+\mathcal{N}[\zeta^1;\mathcal{C}_1^\kappa(\mathbb{R}^{l,d})]\\
&+\mathcal{N}[\zeta^2;
\mathcal{C}_{1}^\infty(\mathbb{R}^{l,d,d})]+\mathcal{N}[\zeta^2;\mathcal{C}_{1}^\kappa(\mathbb{R}^{l,d,d})]\\
&+\mathcal{N}[\rho;\mathcal{C}_2^{2\kappa}(\mathbb{R}^{l,d})]+\mathcal{N}[r;\mathcal{C}_2^{3\kappa}(\mathbb{R}^l)],
\end{align*}
where the notations  $\mathcal{N}[g;\mathcal{C}_1^\kappa(V)]$ and $\mathcal{N}[\zeta;\mathcal{C}_{1}^\infty(V)]$ have been introduced at Section \ref{sec:notations}.
\end{defn}

\begin{rem}
With respect to the case $\ga>1/3$, the link between $\zeta^1$ and $\zeta^2$ in the definition of controlled processes is new. This {\it cascade} relation between $z$,  $\zeta^1$ and $\zeta^2$ is reminiscent of the Heinsenberg group structure of Lyons' theory, and is really natural for computational purposes.
\end{rem}
We can now study the stability of controlled processes by composition with a regular function.

\subsection{Composition of controlled processes}
The results of this section can be summarized into the following:
\begin{prop}
\label{p1.1} Assume Hypothesis \ref{h1.2} holds true. Let
$z\in\mathcal{Q}_{\kappa,a,b}(\mathbb{R}^l)$ with decomposition
(\ref{1.1}), consider a regular function $\varphi\in \mathcal{C}^3_b(\mathbb{R}^l;\mathbb{R})$
and set $\hat z=\varphi(z)$, $\hat a=\varphi(a)$, $\hat
b=\partial_i\varphi(a)b^i$. Then $\hat z\in
\mathcal{Q}_{\kappa,\hat a,\hat b}(\mathbb{R})$, and this latter path admits the decomposition
\begin{equation}
\label{decom} \delta \hat z=(\hat \zeta^1)^j\delta x^j+(\hat
\zeta^2)^{j k}(\mathbf{x^2})^{k j}+\hat r,
\end{equation}
with
$$
(\hat \zeta^1)^j=[\partial_i\varphi(z)\cdot(\zeta^1)^{i j}], \quad
(\hat \zeta^2)^{j k}=[\partial_i\varphi(z)\cdot(\zeta^2)^{i j k}]+[\partial_{i_1 i_2}\varphi(z)\cdot(\zeta^1)^{i_1 j}\cdot(\zeta^1)^{i_2 k}],
$$
and where $\hat r$ can be further decomposed into $\hat r=\hat r^1+\hat r^2 + \hat r^3$, with:
\begin{align*}
\hat r^1&=\partial_i\varphi(z) r^i, \\
\hat r^2&=
\frac{1}{2}[\partial_{i_1 i_2}\varphi(z)\cdot(\zeta^2)^{i_1 j_1 k_1}\cdot(\zeta^2)^{i_2 j_2 k_2}][(\mathbf{x^2})^{k_1 j_1}\cdot(\mathbf{x^2})^{k_2 j_2}]+\frac{1}{2}\partial_{i_1 i_2}\varphi(z)[r^{i_1}\cdot r^{i_2}]\\
&\quad+[\partial_{i_1 i_2}\varphi(z)\cdot(\zeta^1)^{i_1
j_1}\cdot(\zeta^2)^{i_2 j k}][\delta x^{j_1}
\cdot(\mathbf{x^2})^{k
j}]\\
&\quad+[\partial_{i_1 i_2}\varphi(z)\cdot(\zeta^1)^{i_1 j}][\delta
x^j\cdot r^{i_2}]+[\partial_{i_1 i_2}\varphi(z)\cdot(\zeta^2)^{i_1 j k}][(\mathbf{x^2})^{k j}\cdot r^{i_2}],\\
\hat r^3&=\delta\varphi(z)-\partial_i \varphi(z)\delta
z^i-\frac{1}{2}\partial_{i j}\varphi(z)[\delta z^i\cdot\delta
z^j].
\end{align*}
As far as $(\hat \zeta^1)^j$ is concerned, for $1\leq j\leq d$, it can be decomposed into
\begin{equation}
\label{auxdecom}
 \delta (\hat \zeta^1)^j=(\hat \zeta^2)^{j k}\delta x^k+\hat
\rho^j
\end{equation}
where the remainder $\hat \rho^j$ can be expressed as  $\hat \rho^j=(\hat \rho^1)^j+(\hat \rho^2)^j$, with:
\begin{align*}
(\hat \rho^1)^j&=\partial_i
\varphi(z)\rho^{i j}+[\delta[\partial_i\varphi(z)]\cdot\delta(\zeta^1)^{i j}]\\
&\quad+[\partial_{i_1 i_2}\varphi(z)\cdot(\zeta^1)^{i_1 j}\cdot(\zeta^2)^{i_2 j_2 k_2}](\mathbf{x}^\mathbf{2})^{k_2 j_2}+[\partial_{i_1 i_2}\varphi(z)\cdot(\zeta^1)^{i_1 j}]r^{i_2}, \\
(\hat \rho^2)^j&=(\zeta^1)^{i_1 j}\delta[\partial_{i_1}
\varphi(z)]-[(\zeta^1)^{i_1 j}\cdot\partial_{i_1
i_2}\varphi(z)]\delta z^{i_2}.
\end{align*}
Finally, the following cubical bound holds true for the norm of $\hat z$:
\begin{equation}
\label{1.1.1} \mathcal{N}[\hat z;\mathcal{Q}_{\kappa,\hat a,\hat
b}(\mathbb{R})]\leq
c_{\varphi,x,T}(1+\mathcal{N}^3[z;\mathcal{Q}_{\kappa,a,b}(\mathbb{R}^l)]).
\end{equation}
\end{prop}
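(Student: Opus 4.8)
The plan is to obtain the decompositions (\ref{decom}) and (\ref{auxdecom}) by a twofold use of Taylor's formula along the path $z$, and then to check the H\"older regularity of every remainder that pops out; the constraints on $\kappa$ are the same as for $z$, and the initial conditions are immediate since $\hat z_0=\varphi(a)=\hat a$ and $\hat\zeta^1_0=\partial_i\varphi(a)b^i=\hat b$. For the first decomposition I would write, for $s,t\in[0,T]$,
$$(\delta\hat z)_{st}=\partial_i\varphi(z_s)(\delta z^i)_{st}+\tfrac12\partial_{ij}\varphi(z_s)(\delta z^i)_{st}(\delta z^j)_{st}+\hat r^3_{st},$$
which defines $\hat r^3$ and, thanks to $\varphi\in\mathcal{C}^3_b$, gives $|\hat r^3_{st}|\le c_\varphi|\delta z_{st}|^3$, i.e. $\hat r^3\in\mathcal{C}_2^{3\kappa}$. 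Inserting the decomposition (\ref{1.1}) of $\delta z$ into the linear term $\partial_i\varphi(z)\delta z^i$ produces the main increment $\partial_i\varphi(z)(\zeta^1)^{ij}\delta x^j$, one half of the $\mathbf{x^2}$-coefficient, namely $\partial_i\varphi(z)(\zeta^2)^{ijk}$, and the remainder $\hat r^1=\partial_i\varphi(z)r^i\in\mathcal{C}_2^{3\kappa}$. Expanding the quadratic term $\tfrac12\partial_{ij}\varphi(z)(\delta z^i\cdot\delta z^j)$ into its nine pieces and using the symmetry of $\partial_{ij}\varphi$, every piece is already of order $\ge 3\kappa$ (which uses $\kappa\le\gamma$ for the cross terms $\delta x\cdot\mathbf{x^2}$, $\delta x\cdot r$, $\mathbf{x^2}\cdot\mathbf{x^2}$, $\mathbf{x^2}\cdot r$, $r\cdot r$) except for $\tfrac12\partial_{ij}\varphi(z)(\zeta^1)^{ij_1}(\zeta^1)^{jj_2}(\delta x^{j_1}\cdot\delta x^{j_2})$; here I invoke Hypothesis \ref{h1.2} in the form $(\delta x^{j_1})_{st}(\delta x^{j_2})_{st}=(\mathbf{x^2})^{j_1j_2}_{st}+(\mathbf{x^2})^{j_2j_1}_{st}$, which after relabelling turns this term into $\partial_{i_1i_2}\varphi(z)(\zeta^1)^{i_1j}(\zeta^1)^{i_2k}(\mathbf{x^2})^{kj}$, i.e. the second half of $(\hat\zeta^2)^{jk}$. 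Collecting the remaining eight pieces into $\hat r^2$ yields (\ref{decom}) together with the announced formulae.

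For (\ref{auxdecom}) I would apply $\delta$ to the identity $(\hat\zeta^1)^j=\partial_i\varphi(z)(\zeta^1)^{ij}$ via $\delta(fg)_{st}=f_s(\delta g)_{st}+(\delta f)_{st}g_s+(\delta f)_{st}(\delta g)_{st}$. The term $\partial_i\varphi(z)(\delta\zeta^1)^{ij}$, once $\delta\zeta^1$ is replaced by $(\zeta^2)\delta x+\rho$, contributes $\partial_i\varphi(z)(\zeta^2)^{ijk}$ to the $\delta x^k$-coefficient and $\partial_i\varphi(z)\rho^{ij}$ to the remainder. The term $(\delta[\partial_i\varphi(z)])(\zeta^1)^{ij}$ is handled through the first-order Taylor expansion $\delta[\partial_i\varphi(z)]_{st}=\partial_{ii_2}\varphi(z_s)(\delta z^{i_2})_{st}+R^i_{st}$ with $|R^i_{st}|\le c_\varphi|\delta z_{st}|^2$; substituting (\ref{1.1}) for $\delta z^{i_2}$ supplies the remaining part $\partial_{i_1i_2}\varphi(z)(\zeta^1)^{i_1j}(\zeta^1)^{i_2k}$ of the $\delta x^k$-coefficient, and its $\mathbf{x^2}$- and $r$-pieces feed $(\hat\rho^1)^j$, while $R^i(\zeta^1)^{ij}$, which is exactly $(\hat\rho^2)^j$ as written, is of order $2\kappa$; the cross term $\delta[\partial_i\varphi(z)]\cdot\delta\zeta^1$, of order $2\kappa$, also goes into $(\hat\rho^1)^j$. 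Comparing the $\delta x^k$-coefficient thus found with the $(\hat\zeta^2)^{jk}$ obtained in the first step confirms (\ref{auxdecom}) and that $\hat\rho^j\in\mathcal{C}_2^{2\kappa}$.

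It remains to prove the cubic bound (\ref{1.1.1}), which I would do by estimating each of the seven summands of $\mathcal{N}[\hat z;\mathcal{Q}_{\kappa,\hat a,\hat b}(\mathbb{R})]$ in turn. The tools are: the boundedness of the derivatives of $\varphi$; the elementary inequalities $\mathcal{N}[fg;\mathcal{C}_2^\mu]\le\mathcal{N}[f;\mathcal{C}_1^\infty]\,\mathcal{N}[g;\mathcal{C}_2^\mu]$ and $\mathcal{N}[g\cdot h;\mathcal{C}_2^{\mu+\nu}]\le\mathcal{N}[g;\mathcal{C}_2^\mu]\,\mathcal{N}[h;\mathcal{C}_2^\nu]$; the continuous embedding $\mathcal{C}_2^{\mu'}\hookrightarrow\mathcal{C}_2^{\mu}$ for $\mu'\ge\mu$, whose norm is a power of $T$; and the a priori finiteness of $\mathcal{N}[x;\mathcal{C}_1^\gamma]$ and $\mathcal{N}[\mathbf{x^2};\mathcal{C}_2^{2\gamma}]$. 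Each term in the formulae above is a bounded coefficient times increments of $x$ and $\mathbf{x^2}$ times at most three of the quantities $\mathcal{N}[z;\mathcal{C}_1^\kappa]$, $\mathcal{N}[\zeta^1;\mathcal{C}_1^\infty]$, $\mathcal{N}[\zeta^1;\mathcal{C}_1^\kappa]$, $\mathcal{N}[\zeta^2;\mathcal{C}_1^\infty]$, $\mathcal{N}[\zeta^2;\mathcal{C}_1^\kappa]$, $\mathcal{N}[\rho;\mathcal{C}_2^{2\kappa}]$, $\mathcal{N}[r;\mathcal{C}_2^{3\kappa}]$, the genuinely cubic monomials arising only from $\hat r^3$, from the $\delta[\partial_{i_1i_2}\varphi(z)](\zeta^1)(\zeta^1)$ part of $\delta\hat\zeta^2$, and from $(\hat\rho^2)^j$. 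Since each of these quantities is dominated by $\mathcal{N}[z;\mathcal{Q}_{\kappa,a,b}(\mathbb{R}^l)]$ and each monomial of degree $\le 3$ is bounded by $1+\mathcal{N}^3[z;\mathcal{Q}_{\kappa,a,b}(\mathbb{R}^l)]$, summing gives (\ref{1.1.1}) with a constant $c_{\varphi,x,T}$. The chief difficulty throughout is purely organisational: the double Taylor expansion produces a large number of Einstein-summed terms, and one must (i) match each of them to the correct slot --- $\delta x$-coefficient, $\mathbf{x^2}$-coefficient, or remainder of the prescribed order --- (ii) check that every remainder contribution truly carries exponent $\ge 3\kappa$ (resp. $\ge 2\kappa$), which is precisely where $\kappa\le\gamma$ enters, and (iii) apply Hypothesis \ref{h1.2} exactly once, so that the quadratic $\delta x$-term is absorbed into $\hat\zeta^2$ instead of surviving as a spurious term incompatible with the controlled-path structure.
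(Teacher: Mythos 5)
Your proposal is correct and follows exactly the route sketched in the paper: a second-order Taylor expansion of $\varphi(z_t)-\varphi(z_s)$ (so that $\hat r^3$ is the cubic Taylor remainder), substitution of the controlled decomposition (\ref{1.1}) into both the linear and quadratic terms, a single invocation of Hypothesis \ref{h1.2} to absorb the $(\delta x)\otimes(\delta x)$ piece into $\hat\zeta^2$, and the analogous product-rule/Taylor argument for $\delta\hat\zeta^1$; the regularity checks and the cubic bound are organized the same way. The paper simply omits the bookkeeping that you spell out, so there is no substantive difference in method.
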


\begin{proof}
This proof is a matter of long and tedious Taylor expansions, and we shall omit most of the details. Let us just mention that we start from the relation:
\begin{multline*}
(\delta \hat z)_{st}=\varphi(z_t)-\varphi(z_s)=\partial_i
\varphi(z_s)(\delta z^i)_{st}+\frac{1}{2}\partial_{i_1
i_2}\varphi(z_s)(\delta z^{i_1})_{st}(\delta
z^{i_2})_{st}\\
+\varphi(z_t)-\varphi(z_s)-\partial_i \varphi(z_s)(\delta
z^i)_{st}-\frac{1}{2}\partial_{i_1 i_2}\varphi(z_s)(\delta
z^{i_1})_{st}(\delta
z^{i_2})_{st}.
\end{multline*}
The desired decomposition (\ref{decom}) is then obtained by plugging relation (\ref{1.1}) into the last identity, and expanding further. It should also be noticed that some cancellations occur due to Hypothesis \ref{h1.2}. Relation (\ref{auxdecom}) is obtained in the same manner, and our bound (\ref{1.1.1}) is a matter of standard computations once the expressions (\ref{decom}) and (\ref{auxdecom}) are known.

\end{proof}

\subsection{Integration of controlled paths}
It is of course of fundamental importance for our purposes to be able to integrate a controlled process with respect to the driving signal $x$. This is achieved in the following proposition:
\begin{prop}
\label{p1.2} For a given $\gamma>1/4$ and $\kappa\leq\gamma$, let
$x$ be a process satisfying Hypothesis \ref{h1.1}. Let also
$m\in\mathcal{Q}_{\kappa,b,c}(\mathbb{R}^{1,d})$ with
decomposition $m_0=b\in \mathbb{R}^{1,d}$ and
\begin{equation}
\label{1.2} (\delta m^i)_{st}=(\mu_s^1)^{i j}(\delta
x^j)_{st}+(\mu^2_s)^{i j k}(\mathbf{x}^\mathbf{2}_{s t})^{k
j}+r_{st}^i, \quad 1\leq i \leq d,
\end{equation}
where $\mu^1\in
\mathcal{C}_1^\kappa(\mathbb{R}^{d,d})$, $\mu^1_0=c\in
\mathbb{R}^{d,d}$, and where $\delta\mu^1\in
\mathcal{C}_2^\kappa(\mathbb{R}^{d,d})$ can be decomposed into
\begin{equation}
(\delta (\mu^1)^{i j})_{st}=(\mu^2_s)^{i j k}(\delta
x^k)_{st}+\rho_{st}^{i j},
\end{equation}
with $\mu^2\in \mathcal{C}_1^\kappa(\mathbb{R}^{d,d,d})$,
$\rho\in \mathcal{C}_2^{2\kappa}(\mathbb{R}^{d,d})$,
$r\in\mathcal{C}_2^{3\kappa}(\mathbb{R}^{1,d})$.
Define then $z$ by $z_0=a\in \mathbb{R}$ and
\begin{equation}
\label{1.3} \delta z=m^i\delta x^i+(\mu^1)^{i
j}(\mathbf{x}^\mathbf{2})^{j i}+(\mu^2)^{i j
k}(\mathbf{x}^\mathbf{3})^{k j i}+\Lambda\big(r^i\delta
x^i+\rho^{i j}(\mathbf{x}^\mathbf{2})^{j i}+\delta (\mu^2)^{i j
k}(\mathbf{x}^\mathbf{3})^{k j i}\big).
\end{equation}
Finally, set
\begin{equation}
\label{1.4}\mathcal{J}(m^idx^i)=\delta z.
\end{equation}
Then,
\begin{enumerate}
\item[(i)]$z$ is well-defined as an element of
$\mathcal{Q}_{\kappa,a,b}(\mathbb{R})$, and $\mathcal{J}(m^idx^i)$ coincides with a Riemann integral in case of some smooth processes $m$ and $x$.

\item[(ii)]The semi-norm of $z$ in
$\mathcal{Q}_{\kappa,a,b}(\mathbb{R})$ can be estimated as
\begin{equation}
\label{1.5}
\mathcal{N}[z;\mathcal{Q}_{\kappa,a,b}(\mathbb{R})]\leq
c_{x,T}\{1+|b|_{\mathbb{R}^{1,d}}+T^{\gamma-\kappa}(|b|_{\mathbb{R}^{1,d}}+\mathcal{N}[m;\mathcal{Q}_{\kappa,b,c}(\mathbb{R}^{1,d})])\}.
\end{equation}
Furthermore, we obtain
\begin{align}
\label{1.5.1} \|\delta z\|_\kappa\leq
c_{T,x}T^{\gamma-\kappa}(|b|_{\mathbb{R}^{1,d}}+\mathcal{N}[m;\mathcal{Q}_{\kappa,b,c}(\mathbb{R}^{1,d})]).
\end{align}

\item[(iii)]It holds
\begin{align}
\label{1.6}
&\mathcal{J}_{st}(m^idx^i)\nonumber\\
&=\lim_{|\Pi_{st}\to0|}\sum_{q=0}^n[m^i_{t_{q}}(\delta
x^i)_{t_{q},t_{q+1}}+(\mu^1_{t_{q}})^{i
j}(\mathbf{x}^\mathbf{2}_{t_{q},t_{q+1}})^{j
i}+(\mu^2_{t_{q}})^{i j
k}(\mathbf{x}^\mathbf{3}_{t_{q},t_{q+1}})^{k j i}]
\end{align}
for any $0\leq s<t\leq T$, where the limit is taken over all
partitions $\Pi_{st}=\{t_0=s,\ldots,t_n=t\}$ of $[s,t]$, as the
mesh of the partition goes to zero.
\end{enumerate}
\end{prop}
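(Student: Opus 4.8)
The plan is to follow the now-standard strategy for algebraic-integration type results: first check that the increment $\delta z$ defined by formula (\ref{1.3}) is actually a well-defined element of $\cac_2$, then verify the controlled-path structure by identifying $z$ as a triple, then derive the norm estimates, and finally recognize the Riemann-sum expression. The crucial preliminary observation is that the argument of $\Lambda$ in (\ref{1.3}), namely $h := r^i\delta x^i+\rho^{ij}(\mathbf{x^2})^{ji}+\delta(\mu^2)^{ijk}(\mathbf{x^3})^{kji}$, must lie in $\cz\cac_3^{1+}$ so that Proposition \ref{p2.1} applies. To see $h$ is a genuine element of $\cac_3$, I would compute $\delta$ of the finite part $M := m^i\delta x^i+(\mu^1)^{ij}(\mathbf{x^2})^{ji}+(\mu^2)^{ijk}(\mathbf{x^3})^{kji}$ using the Leibniz-type rules for $\delta$ on products (\ref{eq:convention-prod1})--(\ref{eq:convention-prod2}), the decompositions (\ref{1.2}) and its companion for $\delta\mu^1$, and Chen's relations from Hypothesis \ref{h1.1} for $\delta\mathbf{x^2}$ and $\delta\mathbf{x^3}$. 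After the algebra, the terms organize so that $\delta M = -h + (\text{something already known to be }\delta\text{ of a }\cac_2\text{-element})$; more precisely one finds $\delta M = -(\delta\Lambda h')$-type structure, so that $M + \Lambda h$ has $\delta(M+\Lambda h)$ equal to a product of increments of $x$, $\mathbf{x^2}$, $\mathbf{x^3}$ of total Hölder exponent at least $4\kappa > 1$ — this is where the hypothesis $3\kappa+\gamma>1$, hence $4\kappa>1$ when $\kappa\le\gamma$, is used — and in fact $\delta h = 0$ follows from $\delta\circ\delta=0$ applied suitably, confirming $h\in\cz\cac_3$.

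Next, having $z$ well-defined via $\delta z = M + \Lambda h$, I would identify its controlled-path structure: reading off (\ref{1.3}), the candidate decomposition is $(\delta z)_{st} = m^i_s(\delta x^i)_{st} + (\mu^1_s)^{ij}(\mathbf{x^2}_{st})^{ji} + \tilde r_{st}$ with first-level coefficient $\zeta^1 = m$ (as an element of $\cac_1(\R^{1,d})$ — note $z$ is scalar, $m$ is $\R^{1,d}$-valued) and second-level coefficient $\zeta^2 = \mu^1$, and the remainder $\tilde r$ collecting $(\mu^2)^{ijk}(\mathbf{x^3})^{kji}$, the $\Lambda h$ term, and the discrepancy between $m^i\delta x^i$ evaluated at the left endpoint versus the true increment. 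Using Proposition \ref{p2.1}'s bound $\norm{\Lambda h}_\mu \le (2^\mu-2)^{-1}\norm{h}_\mu$ with $\mu = 4\kappa$ (or any admissible exponent $>1$), together with the standard estimates $\norm{fg}_{\mu+\nu}\le\norm{f}_\mu\norm{g}_\nu$ and $\norm{\cdot}$-bounds on $\mathbf{x^2},\mathbf{x^3}$ coming from Hypothesis \ref{h1.1}, one checks $\tilde r\in\cac_2^{3\kappa}$. One must also verify the cascade relation for $\zeta^1 = m$: but this is exactly the hypothesis (\ref{1.2}) imposed on $m$, with $\zeta^2 = \mu^1$'s own decomposition giving the required $\delta\mu^1$ structure — so this step is essentially free. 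Thus $z\in\cq_{\kappa,a,b}(\R)$ as a triple $(z,m,\mu^1)$.

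For part (ii), the estimate (\ref{1.5}) is obtained by bounding each summand in the seven-term semi-norm $\cn[z;\cq_{\kappa,a,b}(\R)]$: the $\cac_1^\infty$ norms of $m$ and $\mu^1$ are controlled by $|b|$ plus $T^{\gamma-\kappa}\cn[m;\cq_{\kappa,b,c}]$ via $\cn[m;\cac_1^\infty]\le|b| + T^\kappa\cn[m;\cac_1^\kappa]$ and reorganizing the $T$-powers, the $\cac_1^\kappa$ norms are direct, and the two remainder norms $\cn[\rho;\cac_2^{2\kappa}]$, $\cn[r;\cac_2^{3\kappa}]$ come from the $\Lambda$-bound and the product estimates, each carrying a factor $T^{\gamma-\kappa}$ from the extra regularity of the $x$-increments over the base Hölder exponent $\kappa$; assembling gives (\ref{1.5}), and (\ref{1.5.1}) is the same computation restricted to the increment $\delta z$ itself. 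Part (iii) follows from Corollary \ref{c2.1}: since $\delta z = (\id - \Lambda\delta)M$ with $\delta M = h$ (up to sign) smooth enough, $\delta z$ is the indefinite integral of the $1$-increment $M$, i.e. the limit of $\sum_q M_{t_q t_{q+1}}$ over partitions with vanishing mesh, which is precisely (\ref{1.6}); the claim in (i) that $\cj(m^i dx^i)$ agrees with the Riemann integral for smooth $m,x$ follows because in that case all of $\mathbf{x^2},\mathbf{x^3}$ are the genuine iterated integrals (Remarks \ref{r2.1}--\ref{r2.2}) and (\ref{1.6}) is a Riemann sum.

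The main obstacle I expect is the bookkeeping in the first step — verifying $h\in\cz\cac_3^{1+}$, i.e. that the $\Lambda$ in (\ref{1.3}) is legitimately applied. This requires carefully applying $\delta$ to every product term, invoking both Chen relations for $\mathbf{x^3}$ (which has the two-term right-hand side $\mathbf{x^2}\otimes\delta x + \delta x\otimes\mathbf{x^2}$), and watching the contracted indices $kji$ so that the $\mathbf{x^2}$-against-$\delta x$ cross terms cancel against the $\delta\mu^1$ and $\delta m$ contributions, leaving only a product of total exponent $>1$. The regularity threshold is tight: one only has $3\kappa+\gamma>1$, so the $r^i\delta x^i$ piece sits at exponent $3\kappa+\gamma>1$ while $\delta(\mu^2)(\mathbf{x^3})$ sits at $\kappa+3\gamma\ge 4\kappa>1$ — one must check every term clears $1$, and that is the delicate point.
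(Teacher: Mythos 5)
Your approach is essentially the same as the paper's: the authors also first treat the smooth case, write $\mathcal{J}(m^i dx^i)=m^i\delta x^i + \mathcal{J}(\delta m^i dx^i)$, plug in the decomposition of $\delta m$ to isolate $\mathcal{J}(r^i dx^i)$, then apply $\delta$ and use Chen's relations to obtain $\delta[\mathcal{J}(r^i dx^i)]=\rho^{ij}(\mathbf{x^2})^{ji}+\delta(\mu^2)^{ijk}(\mathbf{x^3})^{kji}+r^i\delta x^i$, exactly your $h$. Applying $\delta$ to your finite part $M$ and getting $-h$ is the same algebra seen from the other side. Part (iii) via Corollary \ref{c2.1} and the identification $(z,m,\mu^1)$ as the controlled triple are also what the paper does implicitly.

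One remark on the regularity bookkeeping. You claim the terms sit at H\"older exponent \emph{at least} $4\kappa$ and then assert $4\kappa>1$ because $3\kappa+\gamma>1$ and $\kappa\le\gamma$. This inference is backwards: $\kappa\le\gamma$ gives $4\kappa\le 3\kappa+\gamma$, so $3\kappa+\gamma>1$ does \emph{not} imply $4\kappa>1$ (take $\gamma=0.4$, $\kappa=0.21$: $3\kappa+\gamma=1.03>1$ but $4\kappa=0.84<1$). The correct and sufficient observation, which you do state at the very end, is that the binding term is $r^i\delta x^i\in\mathcal{C}_3^{3\kappa+\gamma}$ and $3\kappa+\gamma>1$ is precisely the hypothesis; the other two terms satisfy $2\kappa+2\gamma\ge 3\kappa+\gamma$ and $\kappa+3\gamma\ge 3\kappa+\gamma$ by $\gamma\ge\kappa$, so all clear $1$. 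So the conclusion is right, but the intermediate ``$4\kappa>1$'' claim should be dropped — you never need it, and under the stated hypotheses it can fail.
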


\begin{proof}
Here again, the proof is long and cumbersome, and we prefer to avoid most of the technical details for sake of conciseness. Let us just try to justify the second part of the first assertion (about Riemann integrals).

\smallskip

Let us suppose then that $x$ is a smooth function and that $m\in
\mathcal{C}_1^\infty(\mathbb{R}^{1,d})$ admits the decomposition
(\ref{1.2}) with $\mu^1\in\mathcal{C}_1^\infty(\mathbb{R}^{d,d})$,
$\mu^2\in\mathcal{C}_1^\infty (\mathbb{R}^{d,d,d})$,
$\rho\in\mathcal{C}_2^\infty(\mathbb{R}^{d,d})$ and
$r\in\mathcal{C}_2^\infty(\mathbb{R}^{1,d})$. Then
$\mathcal{J}(m^i dx^i)$ is well-defined, and we have
$$
\int_s^tm_u^idx_u^i=m_s^i[x_t^i-x_s^i]+\int_s^t[m_u^i-m_s^i]dx_u^i
$$
for $s<t$, which can also be read as:
\begin{equation}
\label{1.7}\mathcal{J}(m^idx^i)=m^i\delta x^i + \mathcal{J}(\delta
m^i dx^i).
\end{equation}
Let us now plug the decomposition (\ref{1.2}) into the expression
(\ref{1.7}). This yields
\begin{align}
\label{1.8} \mathcal{J}(m^idx^i)&=m^i\delta
x^i+\mathcal{J}([(\mu^1)^{i j}\delta
x^j]dx^i)+\mathcal{J}([(\mu^2)^{i j k}(\mathbf{x}^\mathbf{2})^{k j}]dx^i)+\mathcal{J}(r^idx^i)\nonumber\\
&=m^i\delta x^i+(\mu^1)^{i j}(\mathbf{x}^\mathbf{2})^{j
i}+(\mu^2)^{i j k} (\mathbf{x}^\mathbf{3})^{k j
i}+\mathcal{J}(r^idx^i),
\end{align}
and observe that the terms $m^i\delta x^i$, $(\mu^1)^{i
j}(\mathbf{x}^\mathbf{2})^{j i}$ and $(\mu^2)^{i j k}
(\mathbf{x}^\mathbf{3})^{k j i}$ in (\ref{1.8}) are well-defined
provided that $x$, $\mathbf{x}^\mathbf{2}$ and
$\mathbf{x}^\mathbf{3}$ are defined themselves. To push forward
our analysis to the rough case, we still need to handle the term
$\mathcal{J}(r^idx^i)$. Owing to (\ref{1.8}) we can write
\begin{equation}
\label{1.8.1} \mathcal{J}(r^idx^i)=\mathcal{J}(m^i dx^i)-m^i\delta
x^i-(\mu^1)^{i j}(\mathbf{x}^\mathbf{2})^{j i}-(\mu^2)^{i j
k}(\mathbf{x}^\mathbf{3})^{k j i},
\end{equation}
and let us analyze this relation by applying $\delta$ to both sides of the last identity. Invoking standard rules on the operator $\delta$, and the fact that $x$ satisfies Hypothesis \ref{h1.1}, we end up with:
$$
\delta[\mathcal{J}(r^idx^i)]=\delta (\mu^1)^{i j}(\mathbf{x}^\mathbf{2})^{j i}+\delta(
\mu^2)^{i j k}(\mathbf{x}^\mathbf{3})^{k j i}-(\mu^2)^{i j
k}\delta x^k(\mathbf{x}^\mathbf{2})^{j i}+r^i\delta x^i,
$$
and thanks to the fact that $\delta (\mu^1)^{i j}=(\mu^2)^{i j k}\delta x^k+\rho^{i
j}$, we obtain:
\begin{equation}\label{1.9}
\delta[\mathcal{J}(r^idx^i)]=
\rho^{i j}(\mathbf{x}^\mathbf{2})^{j i}+ \delta (\mu^2)^{i j
k}(\mathbf{x}^\mathbf{3})^{k j i}+r^i\delta x^i.
\end{equation}
Assuming now that $\rho^{i j}(\mathbf{x}^\mathbf{2})^{j i},\;\delta
(\mu^2)^{i j k}(\mathbf{x}^\mathbf{3})^{k j i},\;r^i\delta x^i\in
\mathcal{C}_3^\nu$ with $\nu>1$, then $\rho^{i
j}(\mathbf{x}^\mathbf{2})^{j i}+ \delta (\mu^2)^{i j
k}(\mathbf{x}^\mathbf{3})^{k j i}+r^i\delta x^i$ becomes an
element of ${\rm Dom}(\Lambda)$. Thus, applying $\Lambda$ to both sides
of (\ref{1.9}) and inserting the result into (\ref{1.7}) we get
the expression (\ref{1.3}) of Proposition \ref{p1.2}. This justifies the fact that
(\ref{1.3}) is a natural expression for $\mathcal{J}(m^idx^i)$.

\end{proof}

As in \cite{NNRT06}, the previous proposition has a
straightforward multidimensional extension, which we state in the
following corollary:

\begin{cor}
\label{c1.1} Let $x$ be a process satisfying Hypothesis \ref{h1.1}
and let $m\in \mathcal{Q}_{\kappa,b,c}(\mathbb{R}^{l,d})$ with
decomposition $m_0=b \in \mathbb{R}^{l,d}$ and
\begin{equation}
\label{1.10} (\delta m^{i j})_{st}=(\mu_s^1)^{i j k}(\delta
x^k)_{st}+(\mu^2_s)^{i j k_1 k_2}(\mathbf{x}^\mathbf{2}_{s
t})^{k_2 k_1}+r_{st}^{i j}\, ;
\quad
\delta (\mu_s^1)^{i j k_1}=(\mu^2_s)^{i j k_1 k_2}\delta
x^{k_2}+\rho^{i j k_1},
\end{equation}
where $(\mu^1)^{i j k_1}\in \mathcal{C}_1^\kappa(\mathbb{R})$, $(\mu^2)^{i j k_1 k_2}\in \mathcal{C}_1^\kappa(\mathbb{R})$,
$\rho^{i j k_1}\in \mathcal{C}_1^{2\kappa}(\mathbb{R})$ and $r^{i
j}\in \mathcal{C}_{2}^{3\kappa}(\mathbb{R})$, for $i=1,\ldots,l$ and
$j,k_1,k_2=1,\ldots,d$. Define $z$ by $z_0=a\in
\mathbb{R}^l$ and
\begin{align}
\label{1.11}\delta z^i &=\mathcal{J}(m^{i j} dx^j)\equiv m^{i
j}\delta
x^j+(\mu^1)^{i j k}(\mathbf{x}^\mathbf{2})^{k j}\nonumber\\
&+(\mu^2)^{i j k_1 k_2}(\mathbf{x}^\mathbf{3})^{k_2 k_1
j}+\Lambda\big(r^{i j}\delta x^j+\rho^{i j
k}(\mathbf{x}^\mathbf{2})^{k j}+\delta (\mu^2)^{i j k_1
k_2}(\mathbf{x}^\mathbf{3})^{k_2 k_1 j}\big).
\end{align}
Then the conclusions of Proposition \ref{p1.2} still hold in this
context.
\end{cor}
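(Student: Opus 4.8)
The statement is the exact $\mathbb{R}^l$-valued counterpart of Proposition \ref{p1.2}, and the plan is to deduce it by applying that proposition to each coordinate $z^i$ separately, exactly in the spirit of \cite{NNRT06}; no genuinely new phenomenon occurs, so what follows is essentially an indexing exercise.

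First I would fix $i\in\{1,\dots,l\}$ and read the hypotheses of the corollary one ``row'' at a time. The family $m^{i,\cdot}=(m^{i1},\dots,m^{id})$, equipped with the increments $(\mu^1)^{i,\cdot,\cdot}$, $(\mu^2)^{i,\cdot,\cdot,\cdot}$, $\rho^{i,\cdot,\cdot}$, $r^{i,\cdot}$ and with the initial data $b^i$ for $m^{i,\cdot}$ and $c^i$ for $(\mu^1)^{i,\cdot,\cdot}$, is then precisely an element of $\mathcal{Q}_{\kappa,b^i,c^i}(\mathbb{R}^{1,d})$ in the sense of Proposition \ref{p1.2}: the first relation in (\ref{1.10}) is just (\ref{1.2}) with the index $i$ frozen, the second relation in (\ref{1.10}) is the decomposition of $\delta\mu^1$ required there, and the remainders sit in the H\"older spaces prescribed by that proposition. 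Proposition \ref{p1.2} then yields a scalar controlled path $z^i\in\mathcal{Q}_{\kappa,a^i,b^i}(\mathbb{R})$ whose defining formula (\ref{1.3}) is exactly the $i$-th line of (\ref{1.11}); in particular item (i) (well-posedness and agreement with the Riemann integral for smooth data), the estimates (\ref{1.5})--(\ref{1.5.1}) and the Riemann-sum representation (\ref{1.6}) hold for every $z^i$.

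Then I would reassemble the $z^i$ into a single $\mathbb{R}^l$-valued path. Taking $(\zeta^1)^{ij}=m^{ij}$ and $(\zeta^2)^{ijk}=(\mu^1)^{ijk}$ --- so that $\delta(\zeta^1)^{ij}=(\zeta^2)^{ijk}\delta x^k+\rho^{ij}$ with $\rho^{ij}:=(\mu^2)^{ijk_1k_2}(\mathbf{x^2})^{k_2k_1}+r^{ij}$, which lies in $\mathcal{C}_2^{2\kappa}$ because $\gamma\ge\kappa$ --- the coordinatewise decompositions of the $\delta z^i$ found above combine, after the obvious relabelling of repeated indices, into the decomposition demanded by Definition \ref{d1.1} for the triple $(z,\zeta^1,\zeta^2)$ with values in $\mathbb{R}^l$. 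Since the H\"older (semi-)norm of a vector-valued increment is equivalent to the maximum of the (finitely many) norms of its scalar components, the membership of the remainders $r$ and $\rho$ in the appropriate spaces, and the norm estimate (\ref{1.5}), follow at once from the scalar case. The one point I would check with some care is that the $\Lambda$ occurring in (\ref{1.11}) is legitimately applied to the $\mathbb{R}^l$-valued increment $r^{ij}\delta x^j+\rho^{ijk}(\mathbf{x^2})^{kj}+\delta(\mu^2)^{ijk_1k_2}(\mathbf{x^3})^{k_2k_1j}$: each of its scalar components belongs to $\cz\cac_3^{1+}(\mathbb{R})$ --- this is precisely the computation carried out in the proof of Proposition \ref{p1.2}, where the budget $\kappa\le\gamma$ together with $3\kappa+\gamma>1$ (inherited from $m\in\mathcal{Q}_{\kappa,b,c}$) is what forces all the relevant H\"older exponents ($3\kappa+\gamma$, $2\kappa+2\gamma$, $\kappa+3\gamma$) to exceed $1$ --- and $\Lambda$ operates coordinate by coordinate, so the increment lies in $\cz\cac_3^{1+}(\mathbb{R}^l)={\rm Dom}(\Lambda)$. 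This verification is the only ingredient that is not pure bookkeeping, and it is already settled in Proposition \ref{p1.2}; the remainder of the proof is a transcription of that argument with the free index $i$ carried along.
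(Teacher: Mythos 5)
Your proof is correct and matches the approach the paper (implicitly) has in mind: the paper does not write out a proof of Corollary~\ref{c1.1}, but rather declares it a ``straightforward multidimensional extension'' of Proposition~\ref{p1.2} in the spirit of \cite{NNRT06}, and the coordinate-by-coordinate reduction you carry out --- freezing the free index $i$, applying Proposition~\ref{p1.2} to each row $m^{i,\cdot}\in\mathcal{Q}_{\kappa,b^i,c^i}(\mathbb{R}^{1,d})$, then reassembling the scalar outputs into a controlled $\mathbb{R}^l$-valued path with $(\zeta^1)^{ij}=m^{ij}$, $(\zeta^2)^{ijk}=(\mu^1)^{ijk}$, and $\rho^{ij}=(\mu^2)^{ijk_1k_2}(\mathbf{x^2})^{k_2k_1}+r^{ij}$ --- is precisely that extension. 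You also correctly identify the one non-bookkeeping point, namely that each scalar component of the argument of $\Lambda$ in (\ref{1.11}) lies in $\mathcal{ZC}_3^{1+}$ because $\kappa\le\gamma$ and $3\kappa+\gamma>1$ force all three relevant exponents above $1$, and that $\Lambda$ acts coordinatewise, which completes the verification.
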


We also observe that our extended pathwise integral has a nice
continuity property with respect to the driving path $x$, whose
proof is also skipped here for sake of conciseness (see also
\cite[Proposition 4]{Gu04}, and \cite[Proposition 3.12]{NNRT06}).

\begin{prop}
\label{p1.3} Let $x$ be a function satisfying Hypotheses
\ref{h1.1} and \ref{h1.2}. Suppose that there exists a sequence
$\{x^n;\;n\geq1\}$ of piecewise $C^1$-functions from [0,T] to
$\mathbb{R}^d$ such that
\begin{equation*}
\label{1.12}
\lim_{n\to\infty}\mathcal{N}[x^n-x;\mathcal{C}_1^\gamma(\mathbb{R}^d)]=0,
\quad
\lim_{n\to\infty}\mathcal{N}[\mathbf{x}^{\mathbf{2},n}-\mathbf{x}^\mathbf{2};\mathcal{C}_2^{2\gamma}(\mathbb{R}^{d,d})]=0,
\end{equation*}
and $\lim_{n\to\infty}\mathcal{N}[\mathbf{x}^{\mathbf{3},n}-\mathbf{x}^\mathbf{3};\mathcal{C}_2^{3\gamma}(\mathbb{R}^{d,d,d})]=0$.
For $n\geq 1$, define $z^n\in \mathcal{C}_1^\kappa(\mathbb{R}^l)$
in the following way: set $z_0^n=b\in \mathbb{R}^l$ and assume that $\delta z^n$ can be decomposed into:
$$
\delta (z^n)^i=(\zeta^{1,n})^{i j}\delta x^j+(\zeta^{2,n})^{i j
k}(\mathbf{x}^\mathbf{2})^{k j}+(r^n)^i,
\quad
\delta (\zeta^{1,n})^{i j}=(\zeta^{2,n})^{i j k}\delta
x^k+(\rho^n)^{i j},
$$
for $1\leq i \leq l$ and $1\leq j,k\leq d$, where $\zeta^{1,n}\in
\mathcal{C}_1^\kappa(\mathbb{R}^{l,d})$ satisfies $\zeta^{1,n}_0=c
\in \mathbb{R}^{l,d}$, and
$\zeta^{2,n}\in \mathcal{C}_1^\kappa(\mathbb{R}^{l,d,d})$,
$\rho^n\in \mathcal{C}_2^{2\kappa}(\mathbb{R}^{l,d})$ and $r^n\in
\mathcal{C}_2^{3\kappa}(\mathbb{R}^l)$. Let also $z$ be a
weakly controlled process with decomposition (\ref{1.1}), such
that $z_0=b$, $\zeta^1_0=c$, and suppose that
\begin{align*}
\lim_{n\to\infty}&\big\{\mathcal{N}[z^n-z;\mathcal{C}_{1}^\kappa(\mathbb{R}^l)]+\mathcal{N}[\zeta^{1,n}-\zeta^1;\mathcal{C}_1^\infty(\mathbb{R}^{l,d})]
+\mathcal{N}[\zeta^{1,n}-\zeta^1;\mathcal{C}_1^\kappa(\mathbb{R}^{l,d})]\\
&+\mathcal{N}[\zeta^{2,n}-\zeta^2;\mathcal{C}_1^\infty(\mathbb{R}^{l,d,d})]
+\mathcal{N}[\zeta^{2,n}-\zeta^2;\mathcal{C}_1^\kappa(\mathbb{R}^{l,d,d})]\\
&+\mathcal{N}[\rho^n-\rho;\mathcal{C}_2^{2\kappa}(\mathbb{R}^{l,d})]+\mathcal{N}[r^n-r;\mathcal{C}_2^{3\kappa}(\mathbb{R}^l)]\big\}=0.
\end{align*}
Finally, let $\varphi:\mathbb{R}^l\to\mathbb{R}^{l',d}$ be a
$C_b^4$-function. Then
$$
\lim_{n\to\infty}\mathcal{N}[\mathcal{J}(\varphi(z^n)dx^n)-\mathcal{J}(\varphi(z)dx);\mathcal{C}_2^{\kappa}(\mathbb{R}^{l'})]=0.
$$
\end{prop}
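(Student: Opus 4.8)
The plan is to reduce the convergence of the integrals to the convergence of the building blocks of a weakly controlled path and its image under $\varphi$, and then to invoke the explicit expression \eqref{1.11} for the integral together with the continuity of $\laa$ from Proposition \ref{p2.1}. The first step is to apply Proposition \ref{p1.1} (in its multidimensional version, Corollary \ref{c1.1}) to $\hat z^n:=\varphi(z^n)$ and $\hat z:=\varphi(z)$. This produces decompositions of the type \eqref{decom}--\eqref{auxdecom}, with coefficients $(\hat\zeta^{1,n},\hat\zeta^{2,n},\hat r^n,\hat\rho^n)$ built polynomially out of $(z^n,\zeta^{1,n},\zeta^{2,n},r^n,\rho^n)$, the increments $\delta x^n$ and the area $\mathbf{x}^{\mathbf{2},n}$, and the derivatives of $\varphi$ up to order three, evaluated along $z^n$. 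The same formulas with the superscript $n$ removed give the coefficients for $\hat z$.

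**Step: convergence of the controlled-path data.** From the hypotheses we already have convergence of $(z^n,\zeta^{1,n},\zeta^{2,n},\rho^n,r^n)$ to $(z,\zeta^1,\zeta^2,\rho,r)$ in the relevant $\cac_1^\kappa$, $\cac_1^\infty$, $\cac_2^{2\kappa}$, $\cac_2^{3\kappa}$ norms, together with $\cn[x^n-x;\cac_1^\gamma]\to0$, $\cn[\mathbf{x}^{\mathbf{2},n}-\mathbf{x}^\mathbf{2};\cac_2^{2\gamma}]\to0$, $\cn[\mathbf{x}^{\mathbf{3},n}-\mathbf{x}^\mathbf{3};\cac_2^{3\gamma}]\to0$. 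Since $\varphi\in C_b^4$, the maps $z\mapsto\partial_i\varphi(z)$, $z\mapsto\partial_{i_1i_2}\varphi(z)$ are Lipschitz and bounded, so $\cn[\delta(\partial_i\varphi(z^n))-\delta(\partial_i\varphi(z));\cac_2^\kappa]\to0$ and similarly for the uniform norms. Then one checks termwise, using that all the product operations $gh$ and $g\cdot h$ of Section \ref{sec:notations} are continuous with respect to the H\"older norms, that
\begin{align*}
&\cn[\hat z^n-\hat z;\cac_1^\kappa(\R^{l'})]+\cn[\hat\zeta^{1,n}-\hat\zeta^1;\cac_1^\infty]+\cn[\hat\zeta^{1,n}-\hat\zeta^1;\cac_1^\kappa]\\
&\quad+\cn[\hat\zeta^{2,n}-\hat\zeta^2;\cac_1^\infty]+\cn[\hat\zeta^{2,n}-\hat\zeta^2;\cac_1^\kappa]+\cn[\hat\rho^n-\hat\rho;\cac_2^{2\kappa}]+\cn[\hat r^n-\hat r;\cac_2^{3\kappa}]\longrightarrow 0.
\end{align*}
The only subtlety here is the remainder term $\hat r^3=\delta\varphi(z)-\partial_i\varphi(z)\delta z^i-\tfrac12\partial_{ij}\varphi(z)[\delta z^i\cdot\delta z^j]$, for which one uses a second-order Taylor expansion with integral remainder and the boundedness of $\partial^3\varphi$ to bound $\cn[\hat r^{3,n}-\hat r^3;\cac_2^{3\kappa}]$ by a constant times $\cn[z^n-z;\cac_1^\kappa]$ plus $\cn^2[z^n-z;\cac_1^\kappa]$ times bounded quantities; uniform boundedness of all the $\cn[z^n;\cac_1^\kappa]$ (which follows from their convergence) is what makes this go through.

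**Step: passing the convergence through the integral.** Now apply formula \eqref{1.11} of Corollary \ref{c1.1} to $m^n:=\hat z^n$ and $m:=\hat z$. The difference $\cj(\hat z^n\,dx^n)-\cj(\hat z\,dx)$, i.e.\ $\delta[z^{n,\rm int}]-\delta[z^{\rm int}]$, splits into the "non-$\Lambda$" part
$$
\hat z^n{}^{ij}\delta x^{n,j}-\hat z^{ij}\delta x^j+(\hat\zeta^{1,n})^{ijk}(\mathbf{x}^{\mathbf2,n})^{kj}-(\hat\zeta^1)^{ijk}(\mathbf{x}^{\mathbf2})^{kj}+(\hat\zeta^{2,n})^{ijk_1k_2}(\mathbf{x}^{\mathbf3,n})^{k_2k_1j}-(\hat\zeta^2)^{ijk_1k_2}(\mathbf{x}^{\mathbf3})^{k_2k_1j}
$$
and the "$\Lambda$" part $\Lambda(\cdots)$ of the corresponding remainders. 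For the non-$\Lambda$ part one writes each difference $a^nb^n-ab=(a^n-a)b^n+a(b^n-b)$ and estimates in $\cac_2^\kappa$ using $\kappa\le\gamma$, $\kappa\le 2\gamma$, $\kappa\le 3\gamma$ and the product continuity; all factors are uniformly bounded by the previous step and by the convergence of $x^n,\mathbf{x}^{\mathbf2,n},\mathbf{x}^{\mathbf3,n}$, so this tends to zero. For the $\Lambda$ part, one first shows that the argument $\hat r^{n,ij}\delta x^{n,j}+\hat\rho^{n,ijk}(\mathbf{x}^{\mathbf2,n})^{kj}+\delta(\hat\zeta^{2,n})^{ijk_1k_2}(\mathbf{x}^{\mathbf3,n})^{k_2k_1j}$ converges to its limit in $\cz\cac_3^\mu$ for some $\mu>1$ (here one uses $3\kappa+\gamma>1$, which guarantees each of the three pieces lies in a $\cac_3^{\mu}$ with $\mu>1$, e.g.\ $r\delta x\in\cac_3^{3\kappa+\gamma}$), and then applies the continuity estimate $\|\Lambda h\|_\mu\le(2^\mu-2)^{-1}\|h\|_\mu$ from Proposition \ref{p2.1}. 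Combining the two parts gives $\cn[\cj(\varphi(z^n)dx^n)-\cj(\varphi(z)dx);\cac_2^\kappa(\R^{l'})]\to0$.

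**Main obstacle.** The delicate point is not any single estimate but the bookkeeping: one must verify that \emph{every} product appearing in \eqref{decom}--\eqref{auxdecom} and in \eqref{1.11} is a continuous bilinear (or trilinear) operation for the H\"older norms at the regularity levels actually available, and that the composition term $\hat r^3$ is controlled in $\cac_2^{3\kappa}$ by differences of $z^n$ and $z$ — this is where the $C_b^4$ (rather than $C_b^3$) hypothesis on $\varphi$ is genuinely used, since one needs Lipschitz control of the second derivatives of $\varphi$ composed along the paths in order to pass from a bound on $\hat z^n$ to a bound on $\hat z^n-\hat z$. Since all these manipulations are the differenced (and hence entirely parallel) versions of the a priori estimates already embedded in Propositions \ref{p1.1} and \ref{p1.2}, we only sketch them and refer to \cite[Proposition 4]{Gu04} and \cite[Proposition 3.12]{NNRT06} for the analogous arguments.
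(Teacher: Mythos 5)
The paper gives no proof of Proposition~\ref{p1.3}; it merely points to \cite[Proposition 4]{Gu04} and \cite[Proposition 3.12]{NNRT06}, and your sketch is in outline the argument used there: transfer convergence of the data through the composition map of Proposition~\ref{p1.1}, then through formula~\eqref{1.11}, treating the explicit terms and the $\Lambda$-term separately, and using the continuity bound of Proposition~\ref{p2.1} for the latter. Your bookkeeping --- bilinearity of the products \eqref{eq:convention-prod1}--\eqref{eq:convention-prod2} in the relevant H\"older norms, the role of $3\kappa+\gamma>1$ in keeping the argument of $\Lambda$ inside $\cz\cac_3^{1+}$, and the fact that $C_b^4$ rather than $C_b^3$ is needed so that differences of the composition coefficients are controlled by $\cn[z^n-z;\cac_1^\kappa]$ --- is the right shape, so as a sketch it is sound.

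There is one point you silently gloss over, and it is not merely cosmetic. As printed, Proposition~\ref{p1.3} decomposes $\delta z^n$ in terms of $\delta x$ and $\mathbf{x}^\mathbf{2}$, i.e.\ it declares $z^n$ to be controlled by $x$. Your argument, however, applies Proposition~\ref{p1.1} and then formula~\eqref{1.11} with the approximating data $x^n$, $\mathbf{x}^{\mathbf{2},n}$, $\mathbf{x}^{\mathbf{3},n}$ --- which is only licit if $z^n$, hence $\varphi(z^n)$, is a controlled path based on $x^n$. If one takes the statement literally, $\varphi(z^n)$ is controlled by $x$, and $\cj(\varphi(z^n)\,dx^n)$ is a Riemann integral of a rough-H\"older function against the smooth path $x^n$; rewriting $\delta z^n$ in terms of $\delta x^n$ and $\mathbf{x}^{\mathbf{2},n}$ forces the errors $\zeta^{1,n}\,\delta(x-x^n)+\zeta^{2,n}(\mathbf{x}^\mathbf{2}-\mathbf{x}^{\mathbf{2},n})$ into the remainder, which is then only $\gamma$-H\"older, not $3\kappa$-H\"older, so the rewritten object is not a controlled path in the sense of Definition~\ref{d1.1} and formula~\eqref{1.11} cannot be invoked directly. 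Almost certainly the statement carries a typo and the decomposition of $z^n$ should read with $\delta x^n$ and $\mathbf{x}^{\mathbf{2},n}$ (this is the convention in \cite{Gu04,NNRT06}); but you should say so explicitly rather than tacitly switching conventions mid-argument, since with the printed hypotheses the step ``apply \eqref{1.11} to $\varphi(z^n)$ against $x^n$'' does not go through as written.
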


\subsection{Rough diffusions equations}

In this section, we shall apply the previous considerations to
study differential equations driven by a rough signal, and recall that we first wish to solve simple equations of the form
\begin{equation}
\label{2.1} dy_t=\sigma(y_t)dx_t,\quad y_0=a,
\end{equation}
where $t\in[0,T]$, $y$ is a $\mathbb{R}^l$-valued continuous
process, $\sigma:\mathbb{R}^l\rightarrow \mathbb{R}^{l,d}$ is a
smooth enough function, $x$ is a $\mathbb{R}^d$-valued path and
$a\in\mathbb{R}^l$ is a fixed initial condition.

\smallskip

In our algebraic setting, we rephrase equation (\ref{2.1}) as follows: we
shall say that $y$ is a solution to (\ref{2.1}), if $y_0=a$, $y\in
\mathcal{Q}_{\kappa,a,\sigma(a)}(\mathbb{R}^l)$ and for any $0\leq
s\leq t\leq T$ we have
\begin{equation}
\label{2.2} (\delta y)_{st}=\mathcal{J}_{st}(\sigma(y)dx),
\end{equation}
where the integral $\mathcal{J}(\sigma(y)dx)$ has to be understood
in the sense of Corollary \ref{c1.1}.

\smallskip

With these notations in mind, our existence and uniqueness result is the following:
\begin{thm}
\label{t2.1}Let $x$ be a process satisfying Hypotheses \ref{h1.1}
and \ref{h1.2}, and $\sigma:\mathbb{R}^l\rightarrow
\mathbb{R}^{l,d}$ be a $C^4_b$-function. Then
\begin{itemize}
\item[(i)]Equation (\ref{2.2}) admits a unique solution $y$ in
$\mathcal{Q}_{\kappa,a,\sigma(a)}(\mathbb{R}^l)$ for any
$\kappa<\gamma$ such that $3\kappa+\gamma>1$.

\item[(ii)]The mapping
$(a,x,\mathbf{x}^\mathbf{2},\mathbf{x}^\mathbf{3})\mapsto y$ is
continuous from
$$
\mathbb{R}^l\times\mathcal{C}_1^\gamma(\mathbb{R}^d)\times
\mathcal{C}_2^{2\gamma}(\mathbb{R}^{d,d})\times
\mathcal{C}_2^{3\gamma}(\mathbb{R}^{d,d,d})\text{ to }
\mathcal{C}_1^\kappa(\mathbb{R}^l),
$$
in the following sense: let $z$ be the unique solution of (\ref{2.2}) in
$\mathcal{Q}_{\kappa,a,\sigma(a)}(\mathbb{R}^l)$ and $\tilde z$ the unique solution of (\ref{2.2}) in $\mathcal{Q}_{\kappa,\tilde a,\sigma(\tilde a)}(\mathbb{R}^l)$,
based on $x,\tilde x$, respectively. Then, there exists a positive
constant $\hat c_{\sigma,x,\tilde x}$ depending only on
$\sigma,x,\tilde x$ such that
\begin{multline*}
\mathcal{N}[z-\tilde z;\mathcal{C}_1^\kappa(\mathbb{R}^l)]
\leq \hat c_{x,\tilde x}\big\{|a-\tilde a|+\mathcal{N}[x-\tilde
x;\mathcal{C}_1^\gamma(\mathbb{R}^l)]\\
+\mathcal{N}[\mathbf{x}^\mathbf{2}-\mathbf{\tilde
x}^\mathbf{2};\mathcal{C}_2^{2\gamma}(\mathbb{R}^{d,d})]
+\mathcal{N}[\mathbf{x}^\mathbf{3}-\mathbf{\tilde
x}^\mathbf{3};\mathcal{C}_2^{3\gamma}(\mathbb{R}^{d,d,d})]\big\}.
\end{multline*}
\end{itemize}
\end{thm}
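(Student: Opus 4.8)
The plan is to set up a fixed point argument in the space of weakly controlled paths $\mathcal{Q}_{\kappa,a,\sigma(a)}(\mathbb{R}^l)$, combining the composition result (Proposition~\ref{p1.1}) with the integration result (Corollary~\ref{c1.1}). First I would define the map $\mathcal{T}$ on $\mathcal{Q}_{\kappa,a,\sigma(a)}(\mathbb{R}^l)$ by $\mathcal{T}(z) = a + \mathcal{J}(\sigma(z)\,dx)$, where $\sigma(z)$ is understood as a controlled path via Proposition~\ref{p1.1} (with the associated increments $(\hat\zeta^1)^{ij}=\partial_k\sigma^{ij}(z)(\zeta^1)^{k\cdot}$ etc.) and the integral is taken in the sense of Corollary~\ref{c1.1}; note that the output of $\mathcal{J}$ automatically produces a triple $(\mathcal{T}(z),\zeta^1,\zeta^2)$ with $\zeta^1 = \sigma(z)$ and $\zeta^2$ read off from the composition formula, so that $\mathcal{T}$ does map $\mathcal{Q}_{\kappa,a,\sigma(a)}$ into itself. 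The key quantitative input is the chain of estimates~(\ref{1.1.1}) and~(\ref{1.5})--(\ref{1.5.1}): composed, they give $\mathcal{N}[\mathcal{T}(z);\mathcal{Q}_{\kappa,a,\sigma(a)}] \le c_{\sigma,x,T}(1 + T^{\gamma-\kappa}\mathcal{N}^3[z;\mathcal{Q}_{\kappa,a,\sigma(a)}])$, the crucial point being the factor $T^{\gamma-\kappa}$ (coming from~(\ref{1.5.1})) which is small for small $T$.

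Next I would establish invariance of a ball and contractivity on a short time interval. Because of the cubic dependence in~(\ref{1.1.1}), the standard trick is to fix a radius $R$ (e.g. $R = 2c_{\sigma,x,T_0}$ with the constant evaluated at some reference time $T_0$) and then choose $T$ small enough that $c_{\sigma,x,T}(1 + T^{\gamma-\kappa}R^3) \le R$, so that $\mathcal{T}$ maps the ball $B_R \subset \mathcal{Q}_{\kappa,a,\sigma(a)}$ into itself. For contractivity one needs the difference estimates analogous to Propositions~\ref{p1.1} and~\ref{p1.2} applied to $z - \tilde z$: by multilinearity of the Taylor expansions, $\mathcal{N}[\mathcal{T}(z)-\mathcal{T}(\tilde z);\mathcal{C}_1^\kappa]$ (or rather the full $\mathcal{Q}$-seminorm of the difference) is bounded by $c_{\sigma,x,T}\,T^{\gamma-\kappa}(1 + R^2)\,\mathcal{N}[z-\tilde z;\mathcal{Q}_{\kappa,a,\sigma(a)}]$, which is a strict contraction for $T$ small. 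The Banach fixed point theorem on this complete metric space then yields a unique solution on $[0,T]$. To reach arbitrary $T$, I would patch: the local existence time depends only on $\|x\|_\gamma$, $\|\mathbf{x^2}\|_{2\gamma}$, $\|\mathbf{x^3}\|_{3\gamma}$ and $\sigma$ — not on the initial condition (since the bound~(\ref{1.5}) only involves $|b|$, which stays controlled) — so finitely many steps of length bounded below cover $[0,T]$, and uniqueness on each subinterval glues to global uniqueness.

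For part (ii), the continuity of $(a,x,\mathbf{x^2},\mathbf{x^3})\mapsto y$, the strategy is to revisit the fixed point estimates while also keeping track of the dependence on the data. One writes $z - \tilde z = \mathcal{T}_x(z) - \mathcal{T}_{\tilde x}(\tilde z) = [\mathcal{T}_x(z) - \mathcal{T}_x(\tilde z)] + [\mathcal{T}_x(\tilde z) - \mathcal{T}_{\tilde x}(\tilde z)]$; the first bracket is handled by the contraction estimate above (giving a coefficient $<1$ that can be absorbed on the left), and the second bracket requires a stability estimate for the integration map with respect to the driving triple, in the spirit of Proposition~\ref{p1.3} but quantitative — i.e. bounding $\mathcal{N}[\mathcal{J}_x(\sigma(\tilde z)dx) - \mathcal{J}_{\tilde x}(\sigma(\tilde z)d\tilde x);\mathcal{C}_2^\kappa]$ by a constant times $|a-\tilde a| + \mathcal{N}[x-\tilde x;\mathcal{C}_1^\gamma] + \mathcal{N}[\mathbf{x^2}-\mathbf{\tilde x^2};\mathcal{C}_2^{2\gamma}] + \mathcal{N}[\mathbf{x^3}-\mathbf{\tilde x^3};\mathcal{C}_2^{3\gamma}]$. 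Again this is first done on a short interval and then propagated; one also needs to note that $\sigma(\tilde a)$ vs $\sigma(a)$ introduces a term controlled by $|a - \tilde a|$ via the Lipschitz bound on $\sigma$.

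The main obstacle, as in all such arguments, is bookkeeping rather than conceptual difficulty: the seminorm on $\mathcal{Q}_{\kappa,a,b}$ has seven components, and one must verify that $\mathcal{T}$ genuinely lands back in the controlled-path class with the \emph{correct} hierarchy (the cascade relation between $z$, $\zeta^1$, $\zeta^2$ must be preserved by $\mathcal{T}$, which is where Corollary~\ref{c1.1} and the structure of~(\ref{1.11}) are essential), and that the difference estimates degrade only by a factor $T^{\gamma-\kappa}$ and not worse. The short-time contraction combined with data-independence of the local existence time is what makes the global patching work; I expect the bulk of the omitted labor to be in re-deriving the "difference" versions of Propositions~\ref{p1.1} and~\ref{p1.2}, which the authors will likely state without proof, citing \cite{Gu04,NNRT06}.
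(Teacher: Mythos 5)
Your proposal follows essentially the same route as the paper: define the map $\Gamma(z)=\hat z$ with $\hat z_0=a$ and $\delta\hat z=\mathcal{J}(\sigma(z)\,dx)$, combine the composition bound (\ref{1.1.1}) with the integration bound (\ref{1.5})--(\ref{1.5.1}) to get the cubic estimate (\ref{2.4}), derive invariance of a ball $B_M$ for $\tau$ small, obtain a contraction, and patch solutions across subintervals using that the local existence time depends only on $x$ and $\sigma$ (since $\sigma$ is bounded, $|b|=|\sigma(\text{initial value})|$ is uniformly controlled). The paper leaves the contraction step, the patching, and part (ii) to the reader as "standard considerations," and what you supply there is the correct fleshing-out of those omissions.
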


\begin{proof}
As in \cite{Gu04,GT08}, we first identify the solution on a small interval
$[0,\tau]$ as the fixed point of the map
$\Gamma:\mathcal{Q}_{\kappa,a,\sigma(a)}(\mathbb{R}^l)\rightarrow
\mathcal{Q}_{\kappa,a,\sigma(a)}(\mathbb{R}^l)$ defined by
$\Gamma(z)=\hat z$ with $\hat z_0=a$ and $\delta \hat
z=\mathcal{J}(\sigma(z)dx)$. The first step in this direction is
to show that the ball
\begin{equation}
\label{2.3}
B_M=\{z;\;z_0=a,\;\mathcal{N}[z;\mathcal{Q}_{\kappa,a,\sigma(a)}([0,\tau];\,\mathbb{R}^l)]\leq
M\}
\end{equation}
is invariant under $\Gamma$ if $\tau$ is small enough and $M$ is
large enough. However, due to Corollary \ref{c1.1} and  Proposition
\ref{p1.1}, invoking the fact that $\sigma$ is bounded together with its derivaties
and assuming $\tau\leq 1$, we obtain
\begin{align}
\label{2.4}
\mathcal{N}[\Gamma(z);\mathcal{Q}_{\kappa,a,\sigma(a)}(\mathbb{R}^l)]&\leq
c_{x}\{1+|\sigma(a)|_{\mathbb{R}^{l,d}}+\tau^{\gamma-\kappa}(|\sigma(a)|_{\mathbb{R}^{l,d}}+\mathcal{N}[\sigma(z);\mathcal{Q}_{\kappa,\hat a,\hat b}(\mathbb{R}^{l,d})])\}\nonumber\\
&\leq
c_{x,\sigma}\{1+\tau^{\gamma-\kappa}\mathcal{N}[\sigma(z);\mathcal{Q}_{\kappa,\hat a, \hat b}(\mathbb{R}^{l,d})]\}\nonumber\\
&\leq c_{x,\sigma}\{1+\tau^{\gamma-\kappa}(1+\mathcal{N}^3[z;\mathcal{Q}_{\kappa,a,\sigma(a)}(\mathbb{R}^l)])\}\nonumber\\
&\leq
\tilde{c}_{x,\sigma}\{1+\tau^{\gamma-\kappa}\mathcal{N}^3[z;\mathcal{Q}_{\kappa,a,\sigma(a)}(\mathbb{R}^l)]\},
\end{align}
where $\hat a=\sigma(a)$ and  $\hat b=\partial_i\sigma(a)\sigma^{i
\cdot}(a)$.
Taking $M>\tilde{c}_{x,\sigma}$ and
$\tau\leq\tau_0=\big(\frac{1}{M^2\tilde{c}_{x,\sigma}}-\frac{1}{M^3}\big)^\frac{1}{\gamma-\kappa}\wedge
1$, we obtain that
$\tilde{c}_{x,\sigma}(1+\tau^{\gamma-\kappa}M^3)\leq M$.
Therefore, the ball $B_M$ defined at (\ref{2.3}) is left invariant
by $\Gamma$.

\smallskip

It is now a matter of standard considerations to settle a fixed
point argument for $\Gamma$ on $[0,\tau]$, and also to patch
solutions on any interval of the form $[k\tau,(k+1)\tau]$ for
$k\ge 1$. The details of this procedure are left to the reader.

\end{proof}

\section{The delay equation case}\label{sec:delay}

This section is devoted to show how to change the diffusion setting in order to cover the case of delayed systems, having in mind to solve an equation of the form:
\begin{equation}
\label{def-delay-eq} \left\{
\begin{array}{ll}
dy_t=\sigma(y_t,y_{t-r_1},\ldots,y_{t-r_{q}}) \, dx_t\quad  t\in[0,T],    \\
y_t=\xi_t,  \qquad  t \in [-r_{q},0],
\end{array}
\right.
\end{equation}
where $x$ is $\mathbb{R}^d$-valued $\gamma$-H\"older continuous
function with $\gamma>1/4$, the function $\sigma$\ is smooth
enough, $\xi$ is a $\mathbb{R}^n$-valued $3\gamma$-H\"older
continuous function, and $0<r_1<\ldots<r_{q}<\infty$. Notice that
for notational convenience, we set $r_0=0$ and we shall also use
the notation
\begin{equation}
\label{def-s(y)} \mathfrak{s}(y)_t=( y_{t-r_1}, \ldots,
y_{t-r_{q}}), \qquad t \in [0,T],
\end{equation}
which means that equation (\ref{def-delay-eq}) can be written as:
$$
\left\{
\begin{array}{ll}
dy_t=\sigma(y_{t-r_0},\mathfrak{s}(y)_t) \, dx_t\quad  t\in[0,T],    \\
y_t=\xi_t,  \qquad  t \in [-r_{q},0].
\end{array}
\right.
$$
As in \cite{NNT07}, the main ingredient in order to go from the diffusion to the delayed case will be the introduction of a new class of processes, namely the class of delayed controlled paths, which captures the structure of our equation. We shall thus first define this new class of paths, and see how to integrate them with respect to the driving process $x$.

\subsection{Delayed controlled paths}
As in the diffusion case, our analysis will rely on some a priori
increments based on our driving noise $x$. More specifically, we
set $\delta (x(v))_{st}\triangleq (\der x)_{s-v,t-v}$ for
$s,t,v\in[0,T]$, and we assume the following:

\begin{hyp}
\label{del:h1.1} The path $x$ is a $\mathbb{R}^d$-valued
$\gamma$-H\"older continuous function with $\gamma>1/4$, and
admits two doubly delayed L\'evy areas and two doubly delayed
volume elements. Namely, for $v,v'\in\{r_{q},\ldots,r_0\}$, we
assume that there exist four paths
$$
\mathbf{x}^\mathbf{2}(v',v),\mathbf{x}^\mathbf{2}(v'-v,v)\in
\mathcal{C}_{2}^{2\gamma}([0,T];\mathbb{R}^{d,d}),
\quad
\mathbf{x}^\mathbf{3}(v',v),\mathbf{x}^{\mathbf{3}}(v'-v,v)\in
\mathcal{C}_2^{3\gamma}([0,T];\mathbb{R}^{d,d,d}),
$$
satisfying the relations $\delta \mathbf{x}^\mathbf{2}(v'',v)=\delta (x(v+v'')) \otimes
\delta (x(v))$ and
$$
\delta \mathbf{x}^\mathbf{3}(v'',v)=\mathbf{x^2}(v'',v)\otimes
\delta x+\delta (x(v+v'')) \otimes
\mathbf{x^2}(v,r_0),
$$
which can also be written as:
\begin{eqnarray*}
(\delta (\mathbf{x^2}(v'',v))^{i j})_{sut}&=&
(\delta x^i)_{s-v-v'',u-v-v''}(\delta x^j)_{u-v,t-v}  \\
(\delta (\mathbf{x}^\mathbf{3}(v'',v))^{i j k})_{sut}&=&(\mathbf{x}^\mathbf{2}_{su}(v'',v))^{i j}(\delta x^k)_{ut}+(\delta x^i)_{s-v-v'',u-v-v''}(\mathbf{x}^\mathbf{2}_{u
t}(v,r_0))^{j k},
\end{eqnarray*}
for $v''=v' \text{ or } v'-v$, for any $s,u,t\in [0,T]$, and any
$i,j,k\in\{1,\ldots,d\}$. The following notational simplification
will also be used in the sequel: we may set
$\mathbf{x}^\mathbf{2}(v''):=\mathbf{x}^\mathbf{2}(v'',v)$
whenever $v=r_0$.
\end{hyp}

\begin{rem}
This hypothesis takes a more complex form than in \cite{NNT07}, where the case $\ga>1/3$ was treated. However, in case of a regular process $x$, it should be noticed that the increments $\mathbf{x^2}(v_1,v_2)$ and $\mathbf{x^3}(v_1,v_2)$ can be defined as:
$$
\mathbf{x}_{st}^\mathbf{2}(v_1,v_2)=\int_{s-v_2}^{t-v_2} (\delta
x(v_1))_{s-v_2,w}  \otimes dx_w , \quad\mbox{and}\quad
\mathbf{x}_{st}^\mathbf{3}(v_1,v_2)=\int_{s}^{t}
\mathbf{x}_{sw}^\mathbf{2}(v_1,v_2) \otimes dx_w,
$$
which means that $\mathbf{x^2}$ (resp. $\mathbf{x^3}$) takes the usual form of a double (resp. triple) iterated integral.
\end{rem}

As in Hypothesis \ref{h1.2}, one should also express the fact that products of increments can be expressed in terms of iterated integrals. The following hypothesis is then easily shown to be a natural extension of what can be obtained in case of a smooth function $x$:
\begin{hyp}
\label{del:h1.2} For $v,v'\in\{r_q,\ldots,r_0\}$, let
$\mathbf{x^2}(v',v)$ and $\mathbf{x^2}(v'-v,v)$
 be the area processes defined at
Hypothesis \ref{del:h1.1}. Then we suppose that for all $0\leq
s<t\leq T$, we have
$\mathbf{x}_{st}^\mathbf{2}(v',v)=\mathbf{x}_{s-v,t-v}^\mathbf{2}(v',r_0)$
and
\begin{equation*}
[\delta x(v)]_{st}\otimes[\delta
x(v')]_{st}=\mathbf{x}^\mathbf{2}_{st}(v-v',v')+(\mathbf{x}^\mathbf{2}_{st}(v'-v,v))^{\ast}.
\end{equation*}
\end{hyp}

\smallskip

With these hypotheses in hand, the delay equation will be solved
in the space of \emph{delayed controlled processes}, which can be
defined as follows:
\begin{defn}
\label{del:d1.1} Let $-\infty<a <b\leq T$, a given initial datum  $\alpha\in\mathbb{R}^n$ and
$z\in\mathcal{C}_1^\kappa([a,b];\mathbb{R}^n)$ with $\kappa\leq
\gamma$ and $3\kappa+\gamma>1$. We say that $z$ is a delayed
controlled path based on $x$ if $z_a=\alpha$, and if $\delta z\in
\mathcal{C}_{2}^\kappa([a,b];\mathbb{R}^n)$ can be decomposed into
\begin{equation}
\label{del:1.1} \delta z^i=(\zeta^1)^{i j}\delta x^j +
(\zeta^{(2,i')})^{i j k} (\mathbf{x^2}(r_{i'}))^{k j}+\mathcal{R}^i,
\end{equation}
for all $1\leq i\leq n$, and where the index $i'$ is summed over the set $0\leq i'\leq q$. Just as in Definition \ref{d1.1}, the process $\zeta^1$ above has to admit the further decomposition: $\zeta^1_a=\beta\in \mathbb{R}^{n,d}$, where $\beta$ has to be interpreted as another initial datum, and for $1\leq j,k \leq d$
\begin{equation}\label{del:1.1b}
\delta (\zeta^1)^{i j}=(\zeta^{(2,i')})^{i j k}\delta
(x(r_{i'}))^k+\rho^{i j}.
\end{equation}
The regularity of the processes introduced above has to be the following:
$\zeta^1$ is an element of $\mathcal{C}_1^\kappa ([a,b];\mathbb{R}^{n,d})$, $\zeta^{(2,i')}\in
\mathcal{C}_{1}^\kappa([a,b];\mathbb{R}^{n,d,d})$, and the remainders
$\mathcal{R}$, $\rho$ must satisfy $\mathcal{R}\in
\mathcal{C}_2^{3\kappa}([a,b];\mathbb{R}^n)$ and $\rho\in
\mathcal{C}_2^{2\kappa}([a,b];\mathbb{R}^{n,d})$.

\smallskip

The space of delayed controlled paths on $[a,b]$ will be denoted
by $\mathcal{D}_{\kappa,\alpha,\beta}([a,b];\mathbb{R}^n)$, and a
path $z\in \mathcal{D}_{\kappa,\alpha,\beta}([a,b];\mathbb{R}^n)$
should be considered in fact as a $(q+3)$-tuple
$
(z,\zeta^1,\zeta^{(2,0)},\ldots,$ $\zeta^{(2,q)}).
$
The natural semi-norm on
$\mathcal{D}_{\kappa,\alpha,\beta}([a,b];\mathbb{R}^n)$ is then given
by
\begin{align*}
&\mathcal{N}[z;\mathcal{D}_{\kappa,\alpha,\beta}([a,b];\mathbb{R}^n)]\\
&=\mathcal{N}[z;\mathcal{C}_1^\kappa([a,b];\mathbb{R}^n)]
+\mathcal{N}[\zeta^1;
\mathcal{C}_1^\infty([a,b];\mathbb{R}^{n,d})]+\mathcal{N}[\zeta^1;\mathcal{C}_1^\kappa([a,b];\mathbb{R}^{n,d})]\\
&+\sum_{i'=0}^{q}\mathcal{N}[\zeta^{(2,i')};
\mathcal{C}_{1}^\infty([a,b];\mathbb{R}^{n,d,d})]+\sum_{i'=0}^{q}\mathcal{N}[\zeta^{(2,i')};\mathcal{C}_{1}^\kappa([a,b];\mathbb{R}^{n,d,d})]\\
&+\mathcal{N}[\rho;\mathcal{C}_2^{2\kappa}([a,b];\mathbb{R}^{n,d})]+\mathcal{N}[\mathcal{R};\mathcal{C}_2^{3\kappa}([a,b];\mathbb{R}^n)],
\end{align*}
where we recall that the notations $\mathcal{N}[g;\mathcal{C}_1^\kappa([a,b];V)]$ and $\mathcal{N}[g;\mathcal{C}_{1}^\infty([a,b];V)]$ have been introduced at Section \ref{sec:notations}.
\end{defn}

\smallskip

Unfortunately, the structure above is not sufficient in order to solve the fractional delay equation for $\ga\le 1/3$, and an additional notion of \emph{doubly delayed controlled processes} has to be introduced.
\begin{defn}
\label{del:d1.2} Let $-\infty<a <b\leq T$, a given initial datum  $\hat\alpha\in\mathbb{R}^n$ and
$z\in\mathcal{C}_1^\kappa([a,b];\mathbb{R}^n)$ with $\kappa\leq
\gamma$ and $3\kappa+\gamma>1$. We say that $z$ is a doubly
delayed controlled path based on $x$, if $z_a=\hat \alpha$ and if $\delta z\in
\mathcal{C}_{2}^\kappa([a,b];\mathbb{R}^n)$ can be decomposed into
\begin{multline}
\label{del:1.1.2} \delta z^i=(\zeta^{(1,i'')})^{i j}\delta
(x(r_{i''}))^j + (\zeta^{(2,i',j')})^{i j k}
(\mathbf{x^2}(r_{j'},r_{i'}))^{k
j} \\
+(\zeta^{(3,i'',j'')})^{i j k} (\mathbf{x^2}(r_{j''}-r_{i''},r_{i''}))^{k j}+\mathcal{R}^i,
\end{multline}
for all $1\leq i\leq n$, $1\leq j,k \leq d$, and where the indices
$i',j'$ and $i'',j''$ are summed over the set $\{1,\ldots,q\}$ and
$\{0,1,\ldots,q\}$, respectively. As in Definition \ref{del:d1.1},
the processes $\zeta^{(1,i'')}$ above have to admit the further
decomposition: $\zeta^{(1,i'')}_a=\hat\beta^{(i'')}$, where
$\hat\beta^{(i'')}\in \mathbb{R}^{n,d}$ has to be interpreted as
another initial datum, and
\begin{equation}
\begin{array}{ll}
\label{del:1.1.3} \delta (\zeta^{(1,0)})^{i
j}&=(\zeta^{(3,0,j'')})^{i j k}\delta
(x(r_{j''}))^k+(\rho^{(0)})^{i j},\\
\delta (\zeta^{(1,i')})^{i j}&=(\zeta^{(2,i',j')})^{i j k}\delta
(x(r_{i'}+r_{j'}))^k+(\zeta^{(3,i',j'')})^{i j k}\delta
(x(r_{j''}))^k+(\rho^{(i')})^{i j}.
\end{array}
\end{equation}
The regularity we ask for the processes introduced above is the
following: $\zeta^{(1,i'')}$ is an element of
$\mathcal{C}_1^\kappa ([a,b];\mathbb{R}^{n,d})$, we have
$\zeta^{(2,i',j')},\zeta^{(3,i'',j'')}\in
\mathcal{C}_{1}^\kappa([a,b];\mathbb{R}^{n,d,d})$, and the
remainders $\mathcal{R}$, $\rho^{(i'')}$ satisfy $\mathcal{R}\in
\mathcal{C}_2^{3\kappa}([a,b];\mathbb{R}^n)$ and $\rho^{(i'')}\in
\mathcal{C}_2^{2\kappa}([a,b];\mathbb{R}^{n,d})$.

\smallskip

The space of doubly delayed controlled paths on the interval
$[a,b]$ will be denoted by $\widehat{\mathcal{D}}_{\kappa,\hat
\alpha,\hat \beta}([a,b];\mathbb{R}^n)$, and a path $z\in
\widehat{\mathcal{D}}_{\kappa,\hat \alpha,\hat
\beta}([a,b];\mathbb{R}^n)$ should be considered in fact as a $(2
q^2+3q+3)$-tuple
$
(z,\{\zeta^{(1,i'')}\},\{\zeta^{(2,i',j')}\},\{\zeta^{(3,i'',j'')}\}).
$
The natural semi-norm on $\widehat{\mathcal{D}}_{\kappa,\hat
\alpha,\hat \beta}([a,b];\mathbb{R}^n)$ is given by
\begin{align*}
&\mathcal{N}[z;\widehat{\mathcal{D}}_{\kappa,\hat \alpha,\hat \beta}([a,b];\mathbb{R}^n)]\\
&=\mathcal{N}[z;\mathcal{C}_1^\kappa([a,b];\mathbb{R}^n)]
+\sum_{i'=0}^{q}\mathcal{N}[\zeta^{(1,i')};
\mathcal{C}_1^\infty([a,b];\mathbb{R}^{n,d})]+\sum_{i'=0}^{q}\mathcal{N}[\zeta^{(1,i')};\mathcal{C}_1^\kappa([a,b];\mathbb{R}^{n,d})]\\
&+\sum_{i',j'=1}^{q}\mathcal{N}[\zeta^{(2,i',j')};
\mathcal{C}_{1}^\infty([a,b];\mathbb{R}^{n,d,d})]+\sum_{i',j'=1}^{q}\mathcal{N}[\zeta^{(2,i',j')};\mathcal{C}_{1}^\kappa([a,b];\mathbb{R}^{n,d,d})]\\
&+\sum_{i',j'=0}^{q}\mathcal{N}[\zeta^{(3,i',j')};
\mathcal{C}_{1}^\infty([a,b];\mathbb{R}^{n,d,d})]+\sum_{i',j'=0}^{q}\mathcal{N}[\zeta^{(3,i',j')};\mathcal{C}_{1}^\kappa([a,b];\mathbb{R}^{n,d,d})]\\
&+\mathcal{N}[\rho;\mathcal{C}_2^{2\kappa}([a,b];\mathbb{R}^{n,d})]+\mathcal{N}[\mathcal{R};\mathcal{C}_2^{3\kappa}([a,b];\mathbb{R}^n)].
\end{align*}
\end{defn}

As in Section \ref{sec:diff-case}, we shall now see how delayed controlled paths behave under composition with a smooth map, and also how to integrate them with respect to the driving process $x$.

\subsection{Composition of delayed controlled processes}
The composition of a delayed process with a smooth map like the function $\si$ appearing in equation (\ref{def-delay-eq}) gives raise to a doubly delayed controlled process. This fact is detailed in the following proposition, for which we recall a notation: for a function $\varphi:(\R^n)^{q+1}\to\R$, we denote by $\partial_i^j\varphi(w_0,w_1,\ldots,w_q)$ the derivative of $\varphi$ with respect to the $i\textsuperscript{th}$ component of $w_j$, for $i\le n$ and $j=0,\ldots,q$.
\begin{prop}
\label{del:p1.1} Assume Hypothesis \ref{del:h1.2} holds true. Consider $0\leq a <b\leq T$,
let $\alpha,\tilde \alpha$ and $\beta,\tilde \beta$ be two initial
conditions, respectively in $\mathbb{R}^n$ and $\mathbb{R}^{n,d}$,
and let also $\varphi$ be a function in $C_b^3(\mathbb{R}^{n,q+1}; \mathbb{R})$. Define a map $T_\varphi$ on
$\mathcal{D}_{\kappa,\alpha,\beta}([a,b];\mathbb{R}^n)\times
\mathcal{D}_{\kappa,\tilde \alpha,\tilde
\beta}([a-r_{q},b-r_1];\mathbb{R}^n)$ by $T_\varphi(z,\tilde
z)=\hat z$, with
$\hat z_t=\varphi(z_t,\mathfrak{s}(\tilde z)_t)$,
for all $t\in[a,b]$. Then, setting
$$
\hat \alpha=\varphi(\alpha,\mathfrak{s}(\tilde
z)_a),
\quad
\hat
\beta^{(0)}=\partial_i^0\varphi(\alpha,\mathfrak{s}(\tilde
z)_a)\beta^i,
\quad
\hat
\beta^{(i')}=\partial_i^{i'}\varphi(\alpha,\mathfrak{s}(\tilde
z)_a) (\tilde\zeta^1_{a-r_{i'}})^i,
$$
for all $1\leq i'\leq q$, we have that $T_\varphi(z,\tilde z)\in
\widehat{\mathcal{D}}_{\kappa,\hat \alpha,\hat
\beta}([a,b];\mathbb{R})$. Moreover, the following cubic bound holds true:
\begin{align}
\label{del:1.1.1} &\mathcal{N}[\hat
z;\widehat{\mathcal{D}}_{\kappa,\hat \alpha,\hat
\beta}([a,b];\mathbb{R})]\nonumber\\
&\leq
c_{\varphi,x,T}(1+\mathcal{N}^3[z;\mathcal{D}_{\kappa,\alpha,\beta}([a,b];\mathbb{R}^n)]+\mathcal{N}^3[\tilde
z;\mathcal{D}_{\kappa,\tilde \alpha,\tilde
\beta}([a-r_{q},b-r_1];\mathbb{R}^n)]).
\end{align}
\end{prop}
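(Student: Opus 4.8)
The plan is to mimic the proof of Proposition \ref{p1.1} in the diffusion case, the main difference being that we must keep track of the spatial argument at which each partial derivative of $\varphi$ is evaluated, and of the fact that a delayed controlled path composed with $\varphi$ produces a \emph{doubly} delayed structure. First I would start from the Taylor expansion of $\hat z=\varphi(z,\mathfrak s(\tilde z))$ up to second order: writing $w_s=(z_s,\mathfrak s(\tilde z)_s)\in\mathbb R^{n,q+1}$,
\begin{equation*}
(\delta\hat z)_{st}=\partial_i^{j}\varphi(w_s)(\delta w^{(j),i})_{st}
+\tfrac12\partial_{i_1 i_2}^{j_1 j_2}\varphi(w_s)(\delta w^{(j_1),i_1})_{st}(\delta w^{(j_2),i_2})_{st}+\hat r^3_{st},
\end{equation*}
where $\hat r^3$ is the third-order Taylor remainder, which lies in $\mathcal C_2^{3\kappa}$ since $\varphi\in C_b^3$ and all the $w^{(j)}$ are $\kappa$-H\"older. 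Here $\delta w^{(0)}=\delta z$ and $\delta w^{(i')}=\delta(\tilde z(r_{i'}))$ for $1\leq i'\leq q$, with the shorthand $\tilde z(r_{i'})_t=\tilde z_{t-r_{i'}}$; note that the decomposition \eqref{del:1.1} for $\tilde z$ on $[a-r_q,b-r_1]$ translates, after shifting time by $r_{i'}$, into a decomposition of $\delta(\tilde z(r_{i'}))$ in terms of $\delta(x(r_{i'}))$ and $\mathbf x^2(r_{i'+\cdot},r_{i'})$ via the Chen-type relations of Hypothesis \ref{del:h1.1}.

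The bulk of the work is then to substitute the decompositions \eqref{del:1.1}--\eqref{del:1.1b} of $z$ (and their time-shifted analogues for $\tilde z$) into the expansion above and to sort all resulting terms into three piles: those proportional to $\delta(x(r_{i''}))$ (these define $\hat\zeta^{(1,i'')}$), those proportional to a single doubly delayed area $\mathbf x^2(r_{j'},r_{i'})$ or $\mathbf x^2(r_{j''}-r_{i''},r_{i''})$ (these define $\hat\zeta^{(2,i',j')}$ and $\hat\zeta^{(3,i'',j'')}$), and the genuine remainder $\mathcal R$. The key algebraic input, exactly as in the diffusion case, is Hypothesis \ref{del:h1.2}: the products $(\delta x(v))_{st}\otimes(\delta x(v'))_{st}$ that appear in the quadratic Taylor term get rewritten as $\mathbf x^2(v-v',v')+(\mathbf x^2(v'-v,v))^{\ast}$, which is precisely why the output is expressed through doubly delayed areas and why no $2\kappa$-order term (which would ruin the estimate) survives outside of $\rho$. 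One then reads off $\hat\zeta^{(1,i'')}$ as (schematically) $\partial_i^{i''}\varphi(w)\cdot\zeta^{1,\cdot}$-type contractions, and $\hat\zeta^{(2)},\hat\zeta^{(3)}$ as sums of a first-order term $\partial\varphi\cdot\zeta^{(2)}$ and a second-order term $\partial^2\varphi\cdot\zeta^1\cdot\zeta^1$, in complete parallel with \eqref{decom}; the decomposition \eqref{del:1.1.3} of $\delta\hat\zeta^{(1,i'')}$ is obtained by the same procedure applied one level down, starting from $\delta[\partial_i^{i''}\varphi(w)\,(\zeta^{1})^{i\cdot}]$ and using \eqref{del:1.1b} together with a further Taylor expansion of $\partial\varphi$.

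Once the explicit forms are in hand, the cubic bound \eqref{del:1.1.1} is routine: each building block is a product of a bounded quantity ($\partial\varphi$ or $\partial^2\varphi$, bounded since $\varphi\in C_b^3$), of at most two factors among $\zeta^1,\zeta^{(2,i')}$ (contributing $\mathcal N[z;\mathcal D]$ to some power), and of an a priori increment of $x$ whose H\"older norm is absorbed into the constant $c_{\varphi,x,T}$; collecting the highest-degree terms, which are the quadratic-Taylor ones carrying $\zeta^1\cdot\zeta^1$ together with a remainder factor, gives degree at most three in $\mathcal N[z;\mathcal D_{\kappa,\alpha,\beta}]+\mathcal N[\tilde z;\mathcal D_{\kappa,\tilde\alpha,\tilde\beta}]$, and the finitely many summations over $i',j',i'',j''\in\{0,\dots,q\}$ only change the constant. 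I expect the main obstacle to be purely bookkeeping: correctly matching, for every term produced by the substitution, the pair of delays $(v'',v)$ so that it lands in one of the \emph{admissible} doubly delayed objects listed in Hypothesis \ref{del:h1.1} (i.e.\ of the form $\mathbf x^2(r_{j'},r_{i'})$ or $\mathbf x^2(r_{j''}-r_{i''},r_{i''})$ rather than some other combination), and checking that the troublesome $2\kappa$-order cross terms cancel or regroup into $\rho^{(i'')}$ via Hypothesis \ref{del:h1.2}; as in Proposition \ref{p1.1}, I would state the final expressions and omit the longest of these verifications.
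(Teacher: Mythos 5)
Your proposal follows exactly the paper's strategy: a second-order Taylor expansion of $\varphi$, substitution of the delayed controlled decompositions \eqref{del:1.1}--\eqref{del:1.1b} (time-shifted for $\tilde z$ via $\mathbf{x}^{\mathbf 2}_{st}(v',v)=\mathbf{x}^{\mathbf 2}_{s-v,t-v}(v',r_0)$), use of Hypothesis \ref{del:h1.2} to convert products $(\delta x(v))\otimes(\delta x(v'))$ into doubly delayed areas, sorting into the $\hat\zeta^{(1,i'')},\hat\zeta^{(2,i',j')},\hat\zeta^{(3,i'',j'')}$ classes, and a parallel one-level-down expansion to obtain \eqref{del:1.1.3}. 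The paper's own proof is equally schematic (it only records the resulting coefficients and omits the remainder expressions), so the level of detail and the route are essentially identical.
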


\begin{proof}
As in Proposition \ref{p1.1}, the proof of this result is based on some cumbersome Taylor expansions which won't be detailed here. Let us just mention briefly the decomposition we obtain for $\hat z$: observe that  $\delta\hat z$ can be decomposed into
\begin{align}
\label{del:decom} &(\delta \hat z)_{st}=(\hat
\zeta^{(1,i')}_s)^{j}(\delta x^j)_{s-r_{i'},t-r_{i'}}+ (\hat
\zeta^{(2,i',j')}_{s})^{j
k}(\mathbf{x}^\mathbf{2}_{st}(r_{j'},r_{i'}))^{k
j}\Ind_{\{i',j'\neq 0\}}\nonumber\\
&\qquad+(\hat \zeta^{(3,i',j')}_{s})^{j
k}(\mathbf{x}^\mathbf{2}_{st}(r_{j'}-r_{i'},r_{i'}))^{k j}+
\mathcal{\hat R}_{st},
\end{align}
where we recall that Einstein's convention on repeated indices is
used, and where $j,k$ are summed over the set $\{1,\ldots,d\}$,
and $i',j'$ over the set $\{0,1,\ldots,q\}$. Furthermore, the
expression for the coefficients $\hat\zeta$ is given by:
\begin{align*}
(\hat
\zeta^{(1,i')}_s)^j&=[\partial_i^{i'}\varphi(z,\mathfrak{s}(\tilde
z))\cdot(\tilde \zeta^1(r_{i'}))^{i j}]_s,\\
(\hat \zeta^{(2,i',j')}_s)^{j
k}&=[\partial_i^{i'}\varphi(z,\mathfrak{s}(\tilde z))\cdot(\tilde
\zeta^{(2,j')}(r_{i'}))^{i j k}]_s,
\end{align*}
and
\begin{multline*}
(\hat \zeta^{(3,i',j')}_s)^{j k}=[\partial_{i_1 i_2}^{i' j'}
\varphi(z,\mathfrak{s}(\tilde z))\cdot(\tilde
\zeta^1(r_{i'}))^{i_1 j}\cdot(\tilde \zeta^1(r_{j'}))^{i_2 k}]_s \\
+[\partial_i^{i'}\varphi(z,\mathfrak{s}(\tilde z))(\tilde
\zeta^{(2,0)}(r_{i'}))^{i j k}]_s \Ind_{\{i' =
j'\}}+[\partial_i^{0}\varphi(z,\mathfrak{s}(\tilde
z))(\zeta^{(2,j')})^{i j k}]_s \Ind_{\{i' = 0,j'\neq 0\}},
\end{multline*}
with the convention that $\tilde \zeta^1(r_0)\triangleq \zeta^1$
and $\tilde \zeta^{(2,j')}(r_0)\triangleq \zeta^{(2,j')}$,
respectively.

As far as the decomposition of $(\hat \zeta^{(1,i')})^j$ is
concerned, we obtain, for $1\leq j\leq d$:
\begin{multline}
\label{del:auxdecom}
(\delta
(\hat\zeta^{(1,i')})^{j})_{st}=\Ind_{\{i',j'\ne
0\}}(\hat\zeta^{(2,i',j')}_s)^{j k}(\delta
x^k)_{s-r_{i'}-r_{j'},t-r_{i'}-r_{j'}}\\
+(\hat\zeta^{(3,i',j')}_s)^{j k}(\delta
x^k)_{s-r_{j'},t-r_{j'}}+(\hat\rho^{(i')}_{st})^{j}.
\end{multline}
For sake of conciseness, we don't include the (long) expressions
we have obtained for the remainders $\mathcal{\hat R}_{st}$,
$(\hat \rho^{(i')}_{st})^{j}$ here. They are also obtained via a
Taylor type expansion, and the relation $\mathbf{x}_{st}^\mathbf{2}(v',v)=\mathbf{x}_{s-v,t-v}^\mathbf{2}(v',r_0)$ (imposed by Hypothesis \ref{del:h1.2}) turns out to be useful at some points of the computations.

\end{proof}

\smallskip

It should also be mentioned that, for a fixed $\tilde z$, the map $T_\varphi(\cdot,\tilde
z):\mathcal{D}_{\kappa,\alpha,\beta}([a,b];\mathbb{R}^n)\rightarrow
\widehat{\mathcal{D}}_{\kappa,\hat \alpha,\hat
\beta}([a,b];\mathbb{R})$ is locally Lipschitz continuous:

\begin{prop}
\label{del:p1.2} Let the notation of Proposition \ref{del:p1.1} prevail, and
suppose that $\varphi$ is a function in $C_b^4(\mathbb{R}^{n,q+1};\mathbb{R})$. Let $0\leq a<b\leq T$, let
$z^{(1)},z^{(2)}\in
\mathcal{D}_{\kappa,\alpha,\beta}([a,b];\mathbb{R}^n)$ and let
$\tilde z \in \mathcal{D}_{\kappa,\tilde \alpha,\tilde
\beta}([a-r_k,b-r_1];\mathbb{R}^n)$. Then,
\begin{align}
\label{del:lips} &\mathcal{N}[T_\varphi(z^{(1)},\tilde
z)-T_\varphi(z^{(2)},\tilde
z);\widehat{\mathcal{D}}_{\kappa,0,0}([a,b];\mathbb{R})]\notag\\
&\leq c_{x,\varphi,T}(1+C(z^{(1)},z^{(2)},\tilde
z))^3\mathcal{N}[z^{(1)}-z^{(2)};\mathcal{D}_{\kappa,0,0}([a,b];\mathbb{R}^n)],
\end{align}
where
\begin{align}
\label{del:cons-lips} C(z^{(1)},z^{(2)},\tilde
z)&=\mathcal{N}[\tilde z;\mathcal{D}_{\kappa,\tilde \alpha,\tilde \beta}([a-r_k,b-r_1];\mathbb{R}^n)]\nonumber\\
&\quad+\mathcal{N}[z^{(1)};\mathcal{D}_{\kappa,\alpha,\beta}([a,b];\mathbb{R}^n)]
+\mathcal{N}[z^{(2)};\mathcal{D}_{\kappa,\alpha,\beta}([a,b];\mathbb{R}^n)]
\end{align}
and the constant $c_{x,\varphi,T}$ depends only on $x$, $\varphi$
and $T$.
\end{prop}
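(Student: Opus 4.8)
The plan is to start from the explicit decomposition of $T_\varphi(z^{(m)},\tilde z)$, $m=1,2$, furnished by Proposition \ref{del:p1.1}, and to estimate the difference term by term. Write $\hat z^{(m)}=T_\varphi(z^{(m)},\tilde z)$ and denote by $\hat\zeta^{(1,i'),(m)}$, $\hat\zeta^{(2,i',j'),(m)}$, $\hat\zeta^{(3,i',j'),(m)}$, $\hat{\mathcal R}^{(m)}$, $\hat\rho^{(i'),(m)}$ the components of the decompositions (\ref{del:decom})--(\ref{del:auxdecom}) applied to $z^{(m)}$. Since $z^{(1)}$ and $z^{(2)}$ share the same initial data $(\alpha,\beta)$, all the differences of controlling components vanish at $t=a$, so that $\hat z^{(1)}-\hat z^{(2)}\in\widehat{\mathcal D}_{\kappa,0,0}([a,b];\mathbb{R})$ and it suffices to bound separately each of the norms entering $\mathcal N[\,\cdot\,;\widehat{\mathcal D}_{\kappa,0,0}]$. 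Note also that, $\tilde z$ being fixed, it is only the difference $z^{(1)}-z^{(2)}$ that propagates through the formulas, which is precisely what turns the estimate into a \emph{Lipschitz} bound rather than mere continuity.

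The two algebraic facts used throughout are: (i) multiplicativity of the H\"older semi-norms in the notation of Section \ref{sec:notations}, i.e. $\mathcal N[gh;\cac_{\bullet}^{\mu+\nu}]\le \mathcal N[g;\cac_\bullet^{\mu}]\,\mathcal N[h;\cac_\bullet^{\nu}]$ and $\mathcal N[g\cdot h;\cac_\bullet^{\mu}]\le \mathcal N[g;\cac_1^{\infty}]\,\mathcal N[h;\cac_\bullet^{\mu}]$, together with the trivial inequality $\mathcal N[h;\cac_\bullet^{\nu}]\le c_T\,\mathcal N[h;\cac_\bullet^{\mu}]$ for $\nu<\mu$ on the bounded interval $[a,b]$; and (ii) since $\varphi\in C_b^4$, the maps $w\mapsto\partial_i^{i'}\varphi(w)$ and $w\mapsto\partial_{i_1 i_2}^{i'j'}\varphi(w)$ are bounded and Lipschitz, so that for instance
\[
\mathcal N\big[\partial_i^{i'}\varphi(z^{(1)},\mathfrak s(\tilde z))-\partial_i^{i'}\varphi(z^{(2)},\mathfrak s(\tilde z));\cac_1^{\infty}\big]\le c_\varphi\,\mathcal N[z^{(1)}-z^{(2)};\cac_1^{\infty}],
\]
and, invoking the mean value theorem together with the $\kappa$-H\"older regularity of $z^{(1)},z^{(2)},\tilde z$, a similar bound holds for the $\kappa$-H\"older norm of the same difference. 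Each coefficient $\hat\zeta^{(\cdot),(1)}-\hat\zeta^{(\cdot),(2)}$ can then be written as a telescoping sum in which exactly one factor is a difference --- either of the form $\partial\varphi(z^{(1)},\mathfrak s(\tilde z))-\partial\varphi(z^{(2)},\mathfrak s(\tilde z))$, or of the form $\zeta^{(\cdot),(1)}-\zeta^{(\cdot),(2)}$ (the latter dominated by $\mathcal N[z^{(1)}-z^{(2)};\mathcal D_{\kappa,0,0}]$) --- while the remaining factors are bounded by $C(z^{(1)},z^{(2)},\tilde z)$. Since at most three factors appear, this produces the cubic prefactor $(1+C(z^{(1)},z^{(2)},\tilde z))^3$ in (\ref{del:lips}).

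The only genuinely delicate part is the treatment of the remainders $\hat{\mathcal R}^{(1)}-\hat{\mathcal R}^{(2)}$ and $\hat\rho^{(i'),(1)}-\hat\rho^{(i'),(2)}$, whose explicit (lengthy) expressions were not reproduced after Proposition \ref{del:p1.1}. Each such remainder is schematically a finite sum of three kinds of terms: first, a smooth coefficient $\partial\varphi(z)$ or $\partial^2\varphi(z)$ times a remainder $\mathcal R\in\cac_2^{3\kappa}$ or $\rho\in\cac_2^{2\kappa}$ of $z$, or times a product of two controlling processes with an area/volume increment; second, products of two controlling processes of $z$ with products of the increments $\delta x$, $\mathbf{x^2}$, $\mathbf{x^3}$, all finite by Hypotheses \ref{del:h1.1}--\ref{del:h1.2}; and third, the second-order Taylor remainder $\delta\varphi(z,\mathfrak s(\tilde z))-\partial_i^{i'}\varphi(\cdots)\,\delta z^i(r_{i'})-\frac{1}{2}\partial^2\varphi(\cdots)[\cdots]$. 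For the first two kinds the telescoping-and-multiplicativity scheme above applies verbatim, the point being that, because $\kappa\le\gamma$, every product lands in a H\"older class whose exponent is at least the one prescribed by Definition \ref{del:d1.2} (no term is worse than $3\kappa$ for $\hat{\mathcal R}$, nor worse than $2\kappa$ for $\hat\rho$); for the third kind one subtracts the two Taylor remainders attached to $z^{(1)}$ and $z^{(2)}$ and applies a third-order Taylor estimate, which is exactly where $\varphi\in C_b^4$ rather than merely $C_b^3$ is needed, together with the relation $\mathbf{x}^\mathbf{2}_{st}(v',v)=\mathbf{x}^\mathbf{2}_{s-v,t-v}(v',r_0)$ of Hypothesis \ref{del:h1.2}. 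I expect this bookkeeping of the remainder terms --- checking the H\"older exponent carried by each product and keeping track of which factor is the ``difference'' factor --- to be the main, though essentially mechanical, obstacle. Once it is done, summing the resulting bounds over the finitely many indices $i',j'\in\{0,\dots,q\}$ and collecting the powers of $C(z^{(1)},z^{(2)},\tilde z)$ yields (\ref{del:lips}) with a constant $c_{x,\varphi,T}$ of the announced form.
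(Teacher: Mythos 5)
The paper gives no proof of Proposition~\ref{del:p1.2} at all --- it is stated immediately before the subsection on integration, with no \texttt{proof} environment, in keeping with the paper's habit of suppressing the long Taylor-expansion bookkeeping (compare the almost identical terseness of the proofs of Propositions~\ref{p1.1} and \ref{del:p1.1}). So there is nothing to compare your attempt against; what one can say is that your sketch is exactly the argument this kind of statement calls for in the controlled-path framework, and it is the one the authors presumably had in mind. The three structural ingredients you isolate are the right ones: the telescoping of differences of products so that exactly one factor carries the $z^{(1)}-z^{(2)}$ dependence while the remaining (at most two) $z$- or $\tilde z$-dependent factors each contribute a power of $C(z^{(1)},z^{(2)},\tilde z)$, giving the cubic prefactor; the multiplicativity of the seminorms $\mathcal N[\cdot;\cac_j^{\mu}]$ under the products (\ref{eq:convention-prod1})--(\ref{eq:convention-prod2}); and the observation that the Lipschitz estimate on the second-order Taylor remainder $\delta\varphi-\partial\varphi\,\delta z-\tfrac12\partial^2\varphi[\delta z\cdot\delta z]$ forces one derivative more than in Proposition~\ref{del:p1.1}, which is precisely why $\varphi\in C_b^4$ rather than $C_b^3$ is assumed. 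One small refinement worth making explicit in your second bullet: to control the $\kappa$-H\"older seminorm of $\partial_i^{i'}\varphi(z^{(1)},\mathfrak s(\tilde z))-\partial_i^{i'}\varphi(z^{(2)},\mathfrak s(\tilde z))$ you actually need $\partial^3\varphi$ (write the difference at times $s$ and $t$, use the integral form of the mean value theorem, and split into a term carrying $\mathcal N[z^{(1)}-z^{(2)};\cac_1^\kappa]$ and a term carrying $\mathcal N[z^{(1)}-z^{(2)};\cac_1^\infty]\cdot\mathcal N[z^{(m)};\cac_1^\kappa]$), so this step already consumes one order of differentiability beyond the boundedness used in Proposition~\ref{del:p1.1}. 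Your acknowledgment that you have not displayed the full list of remainder terms is unavoidable, since the paper itself never reproduces them, but the classification of these terms into three types and the exponent-counting argument you give are the correct way to organize the bookkeeping.
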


\subsection{Integration of delayed controlled paths}
As we have seen in the previous section, the composition with a smooth enough function $\varphi$ transforms a delayed controlled path into a doubly delayed controlled path. We shall see now that the integration with respect to $x$ is acting in the other direction:
\begin{prop}
\label{del:p1.3} For a given $\gamma>1/4$ and $\kappa\leq\gamma$, let
$x$ be a path satisfying Hypothesis \ref{del:h1.1}. Moreover, let
$m\in\widehat{\mathcal{D}}_{\kappa,\hat \alpha,\hat
\beta}([a,b];\mathbb{R}^{1,d})$ such that the increments of $m$
are given by (\ref{del:1.1.2}). Define $z$ by $z_a=\alpha\in
\mathbb{R}$ and
\begin{align}
\label{del:1.3} (\delta z)_{st}&=m^i_s(\delta
x^i)_{st}+(\zeta_s^{(1,i'')})^{i
j}(\mathbf{x}^\mathbf{2}_{st}(r_{i''}))^{j i}\nonumber\\
&\quad+(\zeta_s^{(2,i',j')})^{i j
k}(\mathbf{x}^\mathbf{3}_{st}(r_{j'},r_{i'}))^{k j
i}+(\zeta_s^{(3,i'',j'')})^{i j
k}(\mathbf{x}^\mathbf{3}_{st}(r_{j''}-r_{i''},r_{i''}))^{k j
i}+ \Lambda_{st}(U),\nonumber
\end{align}
for $a\leq s< t \leq b$, $1 \leq i',j'\leq q$, $0 \leq i'',j''\leq
q$, and where $U$ is the increment defined by:
\begin{multline}
U=\mathcal{R}^i\delta x^i+(\rho^{(i'')})^{i
j}(\mathbf{x}^\mathbf{2}(r_{i''}))^{j i}+\delta
(\zeta^{(2,i',j')})^{i j
k}(\mathbf{x}^\mathbf{3}(r_{j'},r_{i'}))^{k j i} \\
+\delta (\zeta^{(3,i'',j'')})^{i j
k}(\mathbf{x}^\mathbf{3}(r_{j''}-r_{i''},r_{i''}))^{k j i}.
\end{multline}
Finally, set
\begin{equation}
\label{del:1.4}\mathcal{J}(m^idx^i)=\delta z.
\end{equation}
Then,

\smallskip

\noindent
(i)
$\mathcal{J}(m^idx^i)$ coincides with the usual Riemann
integral, whenever $m$ and $x$ are smooth functions.

\smallskip

\noindent
(ii) The path $z$ is well-defined as an element of
$\mathcal{D}_{\kappa,\alpha,\beta}([a,b];\mathbb{R})$, with an initial condition $z_a=\alpha$, $m_a=\beta=\hat \alpha$ and a decomposition of the form
\begin{align*}
(\delta z)_{st}&=m^i\delta x^i+(\zeta^{(1,i'')}_{s})^{i
j}(\mathbf{x}^\mathbf{2}_{st}(r_{i''}))^{ji}+\mathcal{\hat
R}_{st},\\
(\delta m^i)_{st}&=(\zeta^{(1,i'')}_s)^{i j}(\delta
x^j)_{s-r_{i''},t-r_{i''}}+\hat \rho_{st}^{i},
\end{align*}
where the remainders $\mathcal{\hat R}\in
\mathcal{C}_2^{3\kappa}([a,b];\mathbb{R})$ and $\hat
\rho\in\mathcal{C}_2^{2\kappa}([a,b];\mathbb{R}^{1,d})$ are given
by
\begin{align*}
\mathcal{\hat R}_{st}&=(\zeta_s^{(2,i',j')})^{i j
k}(\mathbf{x}^\mathbf{3}_{st}(r_{j'},r_{i'}))^{k j
i}+(\zeta_s^{(3,i'',j'')})^{i j
k}(\mathbf{x}^\mathbf{3}_{st}(r_{j''}-r_{i''},r_{i''}))^{k j
i}+\Lambda_{st}(U),\\
\hat \rho^i_{st}&=(\zeta_s^{(2,i',j')})^{i j
k}(\mathbf{x}^\mathbf{2}_{st}(r_{j'},r_{i'}))^{k
j}+(\zeta_s^{(3,i'',j'')})^{i j
k}(\mathbf{x}^\mathbf{2}_{st}(r_{j''}-r_{i''},r_{i''}))^{k
j}+\mathcal{R}^i_{st},
\end{align*}
respectively.

\smallskip

\noindent
(iii)
The semi-norm of $z$ in
$\mathcal{D}_{\kappa,\alpha,\beta}([a,b];\mathbb{R})$ can be
estimated as
\begin{align}
\label{del:1.5}
&\mathcal{N}[z;\mathcal{D}_{\kappa,\alpha,\beta}([a,b];\mathbb{R})]\nonumber\\
&\leq c_{x,T,\kappa,\gamma}\{1+|\hat
\alpha|_{\mathbb{R}^{1,d}}+(b-a)^{\gamma-\kappa}(|\hat
\alpha|_{\mathbb{R}^{1,d}}+\mathcal{N}[m;\widehat{\mathcal{D}}_{\kappa,\hat
\alpha,\hat \beta}([a,b];\mathbb{R}^{1,d})])\}.
\end{align}
Furthermore, the following bound also holds true:
\begin{align}
\label{del:1.5.1} \|\delta z\|_\kappa\leq
c_{x,T,\kappa,\gamma}(b-a)^{\gamma-\kappa}(|\hat
\alpha|_{\mathbb{R}^{1,d}}+\mathcal{N}[m;\widehat{\mathcal{D}}_{\kappa,\hat
\alpha,\hat \beta}([a,b];\mathbb{R}^{1,d})]).
\end{align}

\smallskip

\noindent
(iv)
The Riemann type sums associated to our generalized integral are of the following form:
\begin{multline}\label{del:1.6}
\mathcal{J}_{st}(m^idx^i)=\lim_{|\Pi_{st}|\to0}\sum_{k'=0}^N\big[m^i_{t_{k'}}(\delta
x^i)_{t_{k'},t_{k'+1}}+(\zeta^{(1,i'')}_{t_{k'}})^{i
j}(\mathbf{x}^\mathbf{2}_{t_{k'},t_{k'+1}}(r_{i''}))^{ji}  \\
+(\zeta^{(2,i',j')}_{t_{k'}})^{i j
k}(\mathbf{x}^\mathbf{3}_{t_{k'},t_{k'+1}}(r_{j'},r_{i'}))^{k j i}
+(\zeta^{(3,i'',j'')}_{t_{k'}})^{i j
k}(\mathbf{x}^\mathbf{3}_{t_{k'},t_{k'+1}}(r_{j''}-r_{i''},r_{i''}))^{k
j i}\big],
\end{multline}
for any $a\leq s<t\leq b$, where the limit is taken over any
partitions $\Pi_{st}=\{t_0=s,\ldots,t_n=t\}$ of $[s,t]$, as the
mesh of the partition goes to zero.
\end{prop}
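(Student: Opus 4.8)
The plan is to mimic the proof of Proposition \ref{p1.2}, keeping track of the extra delayed structure throughout.

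I would first treat the smooth case, both to settle assertion (i) and to motivate formula (\ref{del:1.3}). When $m$ and $x$ are smooth one has $\mathcal{J}(m^idx^i)=m^i\delta x^i+\mathcal{J}(\delta m^idx^i)$; inserting the doubly delayed decomposition (\ref{del:1.1.2}) of $\delta m$ and using the representation of $\mathbf{x^2}(v_1,v_2)$ and $\mathbf{x^3}(v_1,v_2)$ as delayed iterated integrals (the remark following Hypothesis \ref{del:h1.1}), the first two levels of $\delta m$ produce exactly the $\mathbf{x^2}(r_{i''})$-term and the two $\mathbf{x^3}$-terms of (\ref{del:1.3}), leaving only $\mathcal{J}(\mathcal{R}^idx^i)$. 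Writing $\mathcal{J}(\mathcal{R}^idx^i)$ as $\delta z$ minus those four explicit terms and applying $\der$, the Chen relations of Hypothesis \ref{del:h1.1} together with the cascade identities (\ref{del:1.1.3}) for $\der\zeta^{(1,i'')}$ yield, after the cancellations, $\der[\mathcal{J}(\mathcal{R}^idx^i)]=U$; one then recovers (\ref{del:1.3}) by applying $\laa$, and this expression is taken as the definition of the integral in the general rough case.

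For assertion (ii) I would first check that $U\in\cz\cac_3^{1+}(\mathbb{R})$. The four pieces of $U$ lie respectively in $\cac_3^{3\kappa+\gamma}$, $\cac_3^{2\kappa+2\gamma}$, $\cac_3^{\kappa+3\gamma}$ and $\cac_3^{\kappa+3\gamma}$, and since $\kappa\le\gamma$ and $3\kappa+\gamma>1$ all four exponents exceed $1$, so $U\in\cac_3^{1+}$; the identity $\der U=0$ follows from a direct computation combining $\der\der=0$, the Chen relations for $\mathbf{x^2}(v'',v)$ and $\mathbf{x^3}(v'',v)$ in Hypothesis \ref{del:h1.1}, and the decompositions (\ref{del:1.1.3}) of $\der\zeta^{(1,i'')}$. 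Hence $\laa U$ is well defined, $z$ is well defined by (\ref{del:1.3}), and comparing (\ref{del:1.3}) and (\ref{del:1.1.2}) with Definition \ref{del:d1.1} one reads off that $z$ is a delayed controlled path with first-level coefficient $m$ (so that $\beta=\hat\alpha=m_a$), second-level coefficients $\zeta^{(1,i'')}$, and remainders $\mathcal{\hat R}$, $\hat\rho$ as displayed in the statement, which collect respectively the two $\mathbf{x^3}$-terms together with $\laa U$, and the two $\mathbf{x^2}$-terms together with $\mathcal{R}$; the regularities $\mathcal{\hat R}\in\cac_2^{3\kappa}$ and $\hat\rho\in\cac_2^{2\kappa}$ are then immediate from $\laa U\in\cac_2^{1+}$ (Proposition \ref{p2.1}) and the a priori regularity of the $\zeta$-coefficients.

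Assertion (iii) is routine once the decomposition is known: each explicit term in (\ref{del:1.3}) is bounded by the relevant H\"older norm of $x$, $\mathbf{x^2}$ or $\mathbf{x^3}$ times a norm of the corresponding $\zeta$-coefficient, and $\laa U$ is controlled through Proposition \ref{p2.1} by the smallest of the four regularity norms of $U$; summing these bounds and extracting a factor $(b-a)^{\gamma-\kappa}$ from the H\"older interpolation on $[a,b]$ gives (\ref{del:1.5}) and (\ref{del:1.5.1}). For assertion (iv), let $g\in\cac_2(\mathbb{R})$ be the sum of the four explicit increments appearing in (\ref{del:1.3}); applying $\der$ to (\ref{del:1.3}) and using $\der\laa U=U$ gives $\der g=-U$, hence $\der z=g-\laa\der g=(\id-\laa\der)g$ with $\der g\in\cac_3^{1+}$, so Corollary \ref{c2.1} yields precisely the Riemann sum representation (\ref{del:1.6}). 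The main obstacle of the whole argument is the verification of $\der U=0$: because of the shifts in the doubly delayed area and volume elements $\mathbf{x^2}(r_{j'},r_{i'})$, $\mathbf{x^2}(r_{j''}-r_{i''},r_{i''})$, $\mathbf{x^3}(r_{j'},r_{i'})$ and $\mathbf{x^3}(r_{j''}-r_{i''},r_{i''})$, combined with the two index families, the cancellations are considerably more intricate than in the diffusion case of Proposition \ref{p1.2}, and it is exactly here that the precise forms of Hypotheses \ref{del:h1.1}--\ref{del:h1.2} and of the cascade relations (\ref{del:1.1.3}) are needed.
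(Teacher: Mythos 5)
Your proposal follows the same route as the paper's own (sketchy) proof: derive formula (\ref{del:1.3}) in the smooth case via the splitting $\mathcal{J}(m^idx^i)=m^i\delta x^i+\mathcal{J}(\delta m^idx^i)$, apply $\der$ to isolate $U$, check $U\in\cz\cac_3^{1+}$ so that $\laa$ applies, read off the delayed controlled structure of $z$ from (\ref{del:1.3}) and (\ref{del:1.1.2}), and obtain the Riemann-sum representation from Corollary~\ref{c2.1}. You supply rather more detail than the paper (in particular the explicit exponents $3\kappa+\gamma$, $2\kappa+2\gamma$, $\kappa+3\gamma$ for the four pieces of $U$ and the reduction $\der z=(\id-\laa\der)g$), but the argument is identical in structure and content.
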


\begin{proof}
As in the proof of Proposition \ref{p1.2}, we shall mainly focus on the first of these assertions, which justifies our definition of generalized integral. Let us thus suppose for the moment that $x$ is a smooth function and that $m\in
\mathcal{C}_1^\infty(\mathbb{R}^{1,d})$ admits the decomposition
(\ref{del:1.1.2}) with
$\zeta^{(1,i'')}\in\mathcal{C}_1^\infty(\mathbb{R}^{d,d})$,
$\zeta^{(2,i',j')},\zeta^{(3,i'',j'')}\in\mathcal{C}_1^\infty
(\mathbb{R}^{d,d,d})$, for $1\leq i',j'\leq q$ and $0\leq i'',j''\leq q$. We also assume that $\rho^{(i'')}\in\mathcal{C}_2^\infty(\mathbb{R}^{d,d})$ and
$\mathcal{R}\in\mathcal{C}_2^\infty(\mathbb{R}^{1,d})$. Then
$\mathcal{J}(m^i dx^i)$ is well-defined as a Riemann integral, and as for relation (\ref{1.7}), one can write
\begin{equation}
\label{del:1.7}\mathcal{J}(m^idx^i)=m^i\delta x^i + \mathcal{J}(\delta
m^i dx^i).
\end{equation}
Let us now plug the decomposition (\ref{del:1.1.2}) into the
expression (\ref{del:1.7}) in order to obtain
\begin{align}
\label{del:1.8} \mathcal{J}(m^idx^i)&=m^i\delta x^i+(\zeta^{(1,i'')})^{i
j}(\mathbf{x}^\mathbf{2}(r_{i''}))^{j i}+(\zeta^{(2,i',j')})^{i j
k} (\mathbf{x}^\mathbf{3}(r_{j'},r_{i'}))^{k j
i}\nonumber\\
&\quad+(\zeta^{(3,i'',j'')})^{i j k}
(\mathbf{x}^\mathbf{3}(r_{j''}-r_{i''},r_{i''}))^{k j
i}+\mathcal{J}(\mathcal{R}^idx^i),
\end{align}
which can also be written as:
\begin{align}
\label{del:1.8.1} \mathcal{J}(\mathcal{R}^idx^i)&=\mathcal{J}(m^i
dx^i)-m^i\delta x^i-(\zeta^{(1,i'')})^{i
j}(\mathbf{x}^\mathbf{2}(r_{i''}))^{j
i}\nonumber\\
&\quad-(\zeta^{(2,i',j')})^{i j k}
(\mathbf{x}^\mathbf{3}(r_{j'},r_{i'}))^{k j
i}-(\zeta^{(3,i'',j'')})^{i j k}
(\mathbf{x}^\mathbf{3}(r_{j''}-r_{i''},r_{i''}))^{k j i}.
\end{align}
We now apply the operator $\delta$ to both sides of the previous equation in order to get a suitable expression  for a generalization to the rough case:
\begin{align*}
\delta[\mathcal{J}(\mathcal{R}^idx^i)]&
=\delta m^i\delta x^i
+\delta (\zeta^{(1,i'')})^{i
j}(\mathbf{x}^\mathbf{2}(r_{i''}))^{j i}-(\zeta^{(1,i'')})^{i
j}[\delta (x(r_{i''}))^{j}\delta x^i]\\
&\quad+\delta (\zeta^{(2,i',j')})^{i j
k}(\mathbf{x}^\mathbf{3}(r_{j'},r_{i'}))^{k j
i}+\delta (\zeta^{(3,i'',j'')})^{i j
k}(\mathbf{x}^\mathbf{3}(r_{j''}-r_{i''},r_{i''}))^{k j
i}\\
&\quad-(\zeta^{(2,i',j')})^{i j k}[
(\mathbf{x}^\mathbf{2}(r_{j'},r_{i'}))^{k j}\delta x^i+\delta (x(r_{i'}+r_{j'}))^k(\mathbf{x}^\mathbf{2}(r_{i'}))^{ji}]\\
&\quad-(\zeta^{(3,i'',j'')})^{i j k}[
(\mathbf{x}^\mathbf{2}(r_{j''}-r_{i''},r_{i''}))^{k j}\delta
x^i+\delta (x(r_{j''}))^k(\mathbf{x}^\mathbf{2}(r_{i''}))^{j i}].
\end{align*}
Taking now into account the decomposition (\ref{del:1.1.2}) for
$m$ and (\ref{del:1.1.3}) for $\zeta^{(1,i'')}$, we obtain:
\begin{multline}\label{del:1.9}
\delta[\mathcal{J}(\mathcal{R}^idx^i)]= \mathcal{R}^i\delta
x^i+(\rho^{(i'')})^{i j}(\mathbf{x}^\mathbf{2}(r_{i''}))^{j
i}+\delta (\zeta^{(2,i',j')})^{i j
k}(\mathbf{x}^\mathbf{3}(r_{j'},r_{i'}))^{k j i}  \\
+\delta (\zeta^{(3,i'',j'')})^{i j
k}(\mathbf{x}^\mathbf{3}(r_{j''}-r_{i''},r_{i''}))^{k j i}=U,
\end{multline}
where $U$ is defined by equation (\ref{del:1.3}).
If we now only assume that $U$ is an element of $\cac_3^\mu$ with $\mu>1$, one gets that $U\in{\rm Dom}(\Lambda)$, which yields the decomposition (\ref{del:1.4}) for $\cj(m^i dx^i)$. The remainder of the proof is then just made of tedious elementary estimations for the regularity of all the terms involved in the decomposition (\ref{del:1.4}). Finally, observe that the Riemann sum limit is obtained by applying Corollary \ref{c2.1}.

\end{proof}

As in Corollary \ref{c1.1}, the previous Proposition can be extended easily to the multidimensional case:
\begin{cor}
Let $m\in \widehat{\mathcal{D}}_{\kappa,\hat \alpha,\hat
\beta}([a,b];\mathbb{R}^{n,d})$ be a doubly delayed controlled path, and define another path $z$ by $\delta z^i=\mathcal{J}(m^{ij}dx^j)$ for $i=1,\ldots, n$. Then $z$ is well-defined as an element of $\mathcal{D}_{\kappa,\alpha,\beta}([a,b];\mathbb{R}^n)$, and the following bound holds true:
\begin{align}
\label{del:1.10}
&\mathcal{N}[z;\mathcal{D}_{\kappa,\alpha,\beta}([a,b];\mathbb{R}^n)]\nonumber\\
&\leq c_{x,T,\kappa,\gamma}\{1+|\hat
\alpha|_{\mathbb{R}^{n,d}}+(b-a)^{\gamma-\kappa}(|\hat
\alpha|_{\mathbb{R}^{n,d}}+\mathcal{N}[m;\widehat{\mathcal{D}}_{\kappa,\hat
\alpha,\hat \beta}([a,b];\mathbb{R}^{n,d})])\}.
\end{align}
\end{cor}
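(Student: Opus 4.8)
The plan is to deduce the statement from the scalar-valued integration result of Proposition~\ref{del:p1.3}, applied row by row, in exactly the same way that Corollary~\ref{c1.1} was deduced from Proposition~\ref{p1.2}.

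First I would observe that the decomposition (\ref{del:1.1.2}) of a doubly delayed controlled path $m\in\widehat{\mathcal{D}}_{\kappa,\hat\alpha,\hat\beta}([a,b];\mathbb{R}^{n,d})$ is diagonal with respect to the first index $i\in\{1,\dots,n\}$: for a fixed $i$, the row $m^{i\cdot}=(m^{ij})_{1\le j\le d}$ is an $\mathbb{R}^{1,d}$-valued increment whose decompositions (\ref{del:1.1.2})--(\ref{del:1.1.3}) are governed precisely by the $i$-slices $(\zeta^{(1,i'')})^{i\,\cdot\,\cdot}$, $(\zeta^{(2,i',j')})^{i\,\cdot\,\cdot\,\cdot}$, $(\zeta^{(3,i'',j'')})^{i\,\cdot\,\cdot\,\cdot}$ of the controlling data, together with the remainders $\mathcal{R}^{i\,\cdot}$ and $(\rho^{(i'')})^{i\,\cdot}$. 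Since the delayed areas and volumes $\mathbf{x^2}(\cdot,\cdot)$, $\mathbf{x^3}(\cdot,\cdot)$ act only on the driving-signal indices, no coupling between distinct values of $i$ is created, so that $m^{i\cdot}\in\widehat{\mathcal{D}}_{\kappa,\hat\alpha^{i},\hat\beta^{i}}([a,b];\mathbb{R}^{1,d})$, where $\hat\alpha^i$, $\hat\beta^i$ denote the $i$-th rows of $\hat\alpha$, $\hat\beta$. Moreover, directly from the definition of the semi-norm in Definition~\ref{del:d1.2} one gets, for a constant $c$ depending only on $n,d,q$, the comparison $\mathcal{N}[m^{i\cdot};\widehat{\mathcal{D}}_{\kappa,\hat\alpha^{i},\hat\beta^{i}}([a,b];\mathbb{R}^{1,d})]\le \mathcal{N}[m;\widehat{\mathcal{D}}_{\kappa,\hat\alpha,\hat\beta}([a,b];\mathbb{R}^{n,d})]$ for every $i$, and conversely $\mathcal{N}[m;\widehat{\mathcal{D}}_{\kappa,\hat\alpha,\hat\beta}([a,b];\mathbb{R}^{n,d})]\le c\sum_{i=1}^n\mathcal{N}[m^{i\cdot};\widehat{\mathcal{D}}_{\kappa,\hat\alpha^{i},\hat\beta^{i}}([a,b];\mathbb{R}^{1,d})]$; the analogous two-sided comparison holds for the $\mathcal{D}_\kappa$ semi-norms of an $\mathbb{R}^n$-valued path versus those of its components.

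Next I would apply Proposition~\ref{del:p1.3} to each of these $n$ scalar problems: this yields, for every $i$, a real-valued path $z^i$ with $z^i_a=\alpha^i$ and $\delta z^i=\mathcal{J}(m^{ij}dx^j)$ given by (\ref{del:1.3})--(\ref{del:1.4}), which belongs to $\mathcal{D}_{\kappa,\alpha^i,\beta^i}([a,b];\mathbb{R})$ with $\beta^i=m^{i\cdot}_a=\hat\alpha^i$, and which satisfies the estimate (\ref{del:1.5}) with $\hat\alpha$ and $m$ replaced by $\hat\alpha^i$ and $m^{i\cdot}$. Collecting the $z^i$ into $z=(z^1,\dots,z^n)$, part~(ii) of Proposition~\ref{del:p1.3} provides, for each $i$, a decomposition of $\delta z^i$ with first-level coefficient $(\zeta^{(1,i'')})^{i\,\cdot}$, an auxiliary decomposition of $m^{i\cdot}$, and remainders $\mathcal{\hat R}^i\in\mathcal{C}_2^{3\kappa}$, $\hat\rho^i\in\mathcal{C}_2^{2\kappa}$; these assemble verbatim into an $\mathbb{R}^n$-valued decomposition of $z$ of the form (\ref{del:1.1})--(\ref{del:1.1b}) with all the regularity requirements of Definition~\ref{del:d1.1} satisfied componentwise, so that $z\in\mathcal{D}_{\kappa,\alpha,\beta}([a,b];\mathbb{R}^n)$ with $\beta=\hat\alpha$. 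No nonlinearity is involved: the map $m\mapsto z$ of (\ref{del:1.3})--(\ref{del:1.4}) is linear in $m$ and its controlling data and $\Lambda$ is linear by Proposition~\ref{p2.1}, so the passage from a scalar to an $\mathbb{R}^n$ target is a purely linear repackaging; for the same reason, the fact that $\mathcal{J}(m^{ij}dx^j)$ reduces to a Riemann integral for smooth $m,x$, as well as the Riemann-sum representation, descend componentwise from parts~(i) and~(iv) of Proposition~\ref{del:p1.3}.

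Finally, the bound (\ref{del:1.10}) follows by summing the $n$ estimates (\ref{del:1.5}) over $i$: on the left one invokes the $\mathcal{D}_\kappa$ semi-norm comparison of the second paragraph, and on the right $|\hat\alpha^i|_{\mathbb{R}^{1,d}}\le|\hat\alpha|_{\mathbb{R}^{n,d}}$ together with the $\widehat{\mathcal{D}}_\kappa$ semi-norm comparison, absorbing all dimensional factors into the constant $c_{x,T,\kappa,\gamma}$. I do not foresee a genuine obstacle here; the only point deserving care is the index bookkeeping of the previous two paragraphs --- namely, that slicing the coefficients $\zeta^{(1,i'')}$, $\zeta^{(2,i',j')}$, $\zeta^{(3,i'',j'')}$ in their first index really does split (\ref{del:1.1.2})--(\ref{del:1.1.3}) into $n$ uncoupled $\mathbb{R}^{1,d}$-decompositions, and that the product semi-norms on the $\widehat{\mathcal{D}}$- and $\mathcal{D}$-spaces are, up to constants depending only on $n,d,q$, comparable to the sums of the row, resp.\ component, semi-norms.
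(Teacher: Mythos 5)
Your proof is correct, and it takes exactly the approach the paper has in mind: the paper gives no explicit argument for this corollary but declares it a "straightforward multidimensional extension" of Proposition~\ref{del:p1.3} in the same way that Corollary~\ref{c1.1} extends Proposition~\ref{p1.2}, which is precisely the row-by-row application you carry out, together with the linearity of $m\mapsto z$ (including $\Lambda$) and the equivalence, up to dimensional constants, of the product semi-norms with the sums of component semi-norms.
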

Another useful feature of our generalized integral is a continuity property with respect to the integrand $m$ in $\mathcal{J}(z\, dx)$, whose proof is omitted again for sake of conciseness:
\begin{prop}
Let $m^{(1)},m^{(2)}\in \widehat{\mathcal{D}}_{\kappa,\hat \alpha,\hat
\beta}([a,b];\mathbb{R}^{n,d})$ be two doubly delayed controlled paths, and
define $z^{(1)},z^{(2)}\in
\mathcal{D}_{\kappa,\alpha,\beta}([a,b];\mathbb{R}^n)$ by
$\delta z^{(l)}=\mathcal{J}(m^{(l)}\, dx)$, for $l=1,2$. Assume moreover that the paths $\zeta^{(1,i'';1)}$ and $\zeta^{(1,i'';2)}$ in the respective decompositions (\ref{del:1.1.2}) of $z^{(1)}$ and $z^{(2)}$ satisfy $\zeta^{(1,i'';1)}_a=\zeta^{(1,i'';2)}_a$, and that $b-a<1$. Then we have:
\begin{multline}\label{del:1.11}
\mathcal{N}[z^{(1)}-z^{(2)};\mathcal{D}_{\kappa,0,0}([a,b];\mathbb{R}^n)]  \\
\leq
c_{x,T,\kappa,\gamma}(b-a)^{\kappa\wedge(\gamma-\kappa)}\mathcal{N}[m^{(1)}-m^{(2)};\widehat{\mathcal{D}}_{\kappa,0,0}([a,b];\mathbb{R}^{n,d})].
\end{multline}
\end{prop}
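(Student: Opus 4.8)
The plan is to reduce the statement to a single application of the (linear) integration map of Proposition~\ref{del:p1.3}, used now in its \emph{homogeneous} form --- i.e. when all the initial data vanish --- and then to redo the H\"older estimates behind~(\ref{del:1.10}) while carefully tracking the powers of $b-a$ that the zero initial conditions make available.

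First I would observe that the right-hand side of~(\ref{del:1.3}) depends linearly on the coefficients $(m,\zeta^{(1,i'')},\zeta^{(2,i',j')},\zeta^{(3,i'',j'')})$ of the integrand, so that the map $m\mapsto\mathcal{J}(m\,dx)$ is linear. Setting $m:=m^{(1)}-m^{(2)}$ and $z:=z^{(1)}-z^{(2)}$, one thus has $\delta z=\mathcal{J}(m\,dx)$ with $z_a=0$. The integrand $m$ is a doubly delayed controlled path whose coefficients and remainders are the differences of those of $m^{(1)}$ and $m^{(2)}$; in particular $m_a=\hat\alpha-\hat\alpha=0$, and the assumption $\zeta^{(1,i'';1)}_a=\zeta^{(1,i'';2)}_a$ is precisely what forces the first-level coefficients $\zeta^{(1,i'')}$ of $m$ to vanish at $a$ as well. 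Hence $m\in\widehat{\mathcal{D}}_{\kappa,0,0}([a,b];\mathbb{R}^{n,d})$, and Proposition~\ref{del:p1.3}(ii) together with its multidimensional corollary gives $z\in\mathcal{D}_{\kappa,0,0}([a,b];\mathbb{R}^n)$, with the decomposition recalled there: the first-level coefficient of $z$ is $m$ itself, its second-level coefficients $\zeta^{(2,i')}_z$ coincide (up to relabelling of indices) with the first-level coefficients $\zeta^{(1,i')}_m$ of $m$, and its remainders $\hat{\mathcal R}$, $\hat\rho$ are the ones displayed in Proposition~\ref{del:p1.3}(ii).

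Next I would revisit the elementary estimates behind~(\ref{del:1.10}) and check that, in this homogeneous situation, each of the finitely many terms defining $\mathcal{N}[z;\mathcal{D}_{\kappa,0,0}([a,b];\mathbb{R}^n)]$ is bounded by $c_{x,T,\kappa,\gamma}\,(b-a)^{\theta}\,\mathcal{N}[m;\widehat{\mathcal{D}}_{\kappa,0,0}([a,b];\mathbb{R}^{n,d})]$ for some exponent $\theta\ge\kappa\wedge(\gamma-\kappa)$. Two standard mechanisms produce the power of $b-a$: a coefficient of $m$ vanishing at $a$ has sup norm bounded by $(b-a)^\kappa$ times its $\kappa$-H\"older norm; and any product of a coefficient against an increment of $x$, $\mathbf{x^2}$ or $\mathbf{x^3}$ (of H\"older order $\gamma$, $2\gamma$, $3\gamma$ respectively) leaves, once measured in $\kappa$-H\"older norm, an excess $\gamma-\kappa>0$, $2\gamma-\kappa>0$ or $3\gamma-\kappa>0$ which is converted into a power of $b-a$. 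Concretely: $\|\delta z\|_\kappa$ is handled by~(\ref{del:1.5.1}) with $\theta=\gamma-\kappa$; $\mathcal{N}[m;\mathcal{C}_1^\infty]\le(b-a)^\kappa\|\delta m\|_\kappa$ gives $\theta=\kappa$; the sup norm $\|\zeta^{(1,i')}_m\|_\infty$ of the second-level coefficient of $z$, which vanishes at $a$ by hypothesis, is again $\le(b-a)^\kappa\|\delta\zeta^{(1,i')}_m\|_\kappa$; the $\kappa$-H\"older norms of $m$ and of $\zeta^{(1,i')}_m$ are expanded via~(\ref{del:1.1.2})--(\ref{del:1.1.3}), each of whose terms pairs a bounded coefficient with an increment of $x$ or $\mathbf{x^2}$ of excess regularity; and the remainders $\hat{\mathcal R}=\zeta^{(2)}_m\mathbf{x^3}+\zeta^{(3)}_m\mathbf{x^3}+\Lambda(U)$, $\hat\rho$ are estimated exactly as in the proof of Proposition~\ref{del:p1.3}(ii), using the continuity of $\Lambda$ from Proposition~\ref{p2.1} applied to $U$ --- each summand of which carries a strictly positive regularity excess, so that $\Lambda(U)\in\mathcal{C}_2^{3\kappa+\gamma}$ --- which once more produces positive powers of $b-a$. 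Since $b-a<1$, each factor $(b-a)^\theta$ is dominated by $(b-a)^{\kappa\wedge(\gamma-\kappa)}$, and summing over the finitely many terms yields~(\ref{del:1.11}).

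The main obstacle --- and the one point where the hypothesis $\zeta^{(1,i'';1)}_a=\zeta^{(1,i'';2)}_a$ is genuinely used --- is the term $\mathcal{N}[\zeta^{(2,i')}_z;\mathcal{C}_1^\infty]=\|\zeta^{(1,i')}_m\|_\infty$ entering $\mathcal{N}[z;\mathcal{D}_{\kappa,0,0}]$: without that assumption $\zeta^{(1,i')}_m$ would not vanish at $a$, its sup norm would be bounded below by a constant independent of $b-a$, and no positive power of $b-a$ could be extracted, so the homogeneous contraction estimate would break down. With it in force, what remains is the same mechanical --- if somewhat tedious --- collection of H\"older computations already carried out in Propositions~\ref{p1.2} and~\ref{del:p1.3}, now performed while keeping track of the exponent of $b-a$, and I would omit those routine details.
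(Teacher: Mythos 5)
The paper itself omits the proof of this proposition ("whose proof is omitted again for sake of conciseness"), so there is no argument of the paper to compare against; what you have done is to supply the missing details, and your proposal is correct and follows the mechanism used throughout the paper for statements of this kind (compare the proof sketches of Propositions~\ref{p1.2} and~\ref{del:p1.3}).

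Your key reductions are all sound: linearity of the map $m\mapsto\cj(m\,dx)$, visible directly from the formula~(\ref{del:1.3}); the passage to the difference $m=m^{(1)}-m^{(2)}$, $z=z^{(1)}-z^{(2)}$, using the shared initial data to place $m$ in $\widehat{\cd}_{\kappa,0,0}$ (and $z$ in $\cd_{\kappa,0,0}$ via Proposition~\ref{del:p1.3}(ii)); and the catalogue of where the positive power of $b-a$ comes from in each of the seven terms of the $\cd_{\kappa,0,0}$ semi-norm. You are also right to single out $\cn[\zeta^{(2,i')}_z;\cac_1^\infty]=\|\zeta^{(1,i')}_m\|_\infty$ as the critical term for which the additional hypothesis $\zeta^{(1,i'';1)}_a=\zeta^{(1,i'';2)}_a$ is indispensable: without it, this supremum norm does not shrink with $b-a$ and no contraction estimate can hold. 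One term you could have spelled out a bit more explicitly is $\cn[m;\cac_1^\kappa]=\|\delta m\|_\kappa$, which is \emph{not} directly dominated by a power of $(b-a)$ times itself; it requires expanding $\delta m$ through~(\ref{del:1.1.2}) so that each summand pairs a bounded coefficient with an increment of $x$ or $\mathbf{x^2}$ of excess H\"older regularity (exponent $\ga-\kappa$, $2\ga-\kappa$) or with the remainder $\crr\in\cac_2^{3\kappa}$ (excess $2\kappa$) — but you do say "each of whose terms pairs a bounded coefficient with an increment of $x$ or $\mathbf{x^2}$ of excess regularity", so the mechanism is there. Finally, your treatment of $\Lambda(U)$ — noting that each summand of $U$ lies in $\cz\cac_3^\mu$ with $\mu\ge 3\kappa+\gamma>1$, applying the continuity of $\Lambda$ from Proposition~\ref{p2.1}, and converting the excess $\gamma$ into a factor $(b-a)^\gamma$ when passing to the $3\kappa$-norm — is exactly right. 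The proof is complete modulo the routine computations you correctly declare and omit.
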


\subsection{Rough delay differential equations}
\label{del:s2}

We shall now turn to the main goal of this section, namely the resolution of equation (\ref{def-delay-eq}), which can be written now as:
\begin{equation}\label{def-delay-eq-intg}
(\der y)_{st}=\cj_{st}\lp \si(y,\mathfrak{s}(y))\, dx\rp, \quad s,t\in[0,T],
\end{equation}
with initial condition $y_t=\xi_t$ for $t\in[-r,0]$, where we recall the convention (\ref{def-s(y)}) for $\mathfrak{s}$, and where the integral $\cj$ has to be interpreted according to Proposition \ref{del:p1.3}.

\smallskip

Before stating our main result in this direction, let us introduce a natural map, called $\Gamma$, associated to our delay equation. It is defined by $\Gamma(z,\tilde z):= \hat z$, where (recalling the notation $T_\si$ introduced at Proposition \ref{del:p1.1}) $\delta \hat z$ is given as $\delta \hat z=\cj(T_\sigma(z,\tilde z)\, dx)$, on the following spaces:
$$
\Gamma:\mathcal{D}_{\kappa,\alpha,\beta}([a,b];\mathbb{R}^n)\times\mathcal{D}_{\kappa,\tilde\alpha,\tilde\beta}([a-r_{q},b-r_1];\mathbb{R}^n)
\rightarrow \mathcal{D}_{\kappa,\alpha,\beta}([a,b];\mathbb{R}^n)
$$
for $0\leq a < b \leq T$. In the previous definition, $\al,\tilde\al,\beta$ and $\tilde\beta$ stand for some initial conditions, with the additional compatibility condition $\beta=\sigma(\alpha,\tilde z_{a-r_1},\ldots,\tilde z_{a-r_{q-1}},\tilde z_{a-r_{q}})$, which shall be satisfied in our delay equation context. Notice also that, from now on, we shall use the
convention that $z_s=\tilde z_s=\hat z_s=\xi_s$ for $s\in[-r_q,0]$. Since we have assumed $\xi\in\cac_1^{3\gamma}$, this means in particular that, on $[-r_q,0]$, the paths $z,\tilde z,\hat z$ are still controlled processes, whose degenerate decomposition is only given by a remainder term. This allows to complete easily the definition of $\Gamma$ on intervals of the form $[a,b]$ with $a<r_q$.

\begin{rem}
As in \cite{NNT07}, we could have handled the case of a controlled initial condition $\xi$. We did not consider this possibility here for sake of conciseness.
\end{rem}

Let us gather now some useful relations concerning the operator $\Gamma$ we have just defined: first of all, by putting together inequalities (\ref{del:1.10}) and (\ref{del:1.1.1}), one obtains
\begin{multline}\label{del:2.4}
\mathcal{N}[\Gamma(z,\tilde z);\mathcal{D}_{\kappa,\alpha,\beta}([a,b];\mathbb{R}^n)]\leq c_{\sigma,x,T,\kappa,\gamma}\big(1+\mathcal{N}^3[\tilde
z;\mathcal{D}_{\kappa,\tilde \alpha,\tilde \beta}([a-r_{q},b-r_1];\mathbb{R}^n)]\big)  \\
\times\big(1+(b-a)^{\gamma-\kappa}\mathcal{N}^3[z;\mathcal{D}_{\kappa,\alpha,\beta}([a,b];\mathbb{R}^n)]\big),
\end{multline}
which means that the semi-norm of the mapping
$\Gamma$ is cubically bounded in terms of the semi-norm of $z$ and
$\tilde z$.

\smallskip

Furthermore, let $z^{(1)},z^{(2)}\in
\mathcal{D}_{\kappa,\alpha,\beta}([a,b];\mathbb{R}^n)$ and $\tilde
z \in \mathcal{D}_{\kappa,\tilde \alpha,\tilde
\beta}([a-r_{q},b-r_1];\mathbb{R}^n)$. Then, if $b-a<1$, applying successively the inequalities (\ref{del:1.11}) and  (\ref{del:lips}), we get that
\begin{align}
\label{del:2.6}
&\mathcal{N}[\Gamma(z^{(1)},\tilde z)-\Gamma(z^{(2)},\tilde z);\mathcal{D}_{\kappa,0,0}([a,b];\mathbb{R}^n)]\nonumber\\
&\leq \tilde
c_{x,\sigma,T,\kappa,\gamma}(1+C(z^{(1)},z^{(2)},\tilde
z))^3(b-a)^{\kappa\wedge(\gamma-\kappa)}\mathcal{N}[z^{(1)}-z^{(2)};\mathcal{D}_{\kappa,0,0}([a,b];\mathbb{R}^n)],
\end{align}
where $C(z^{(1)},z^{(2)},\tilde z)$ is defined at (\ref{del:cons-lips}).
Therefore, for fixed $\tilde z$ the mappings $\Gamma(\cdot,\tilde
z)$ are locally Lipschitz continuous with respect to the semi-norm
$\mathcal{N}[\cdot;\mathcal{D}_{\kappa,0,0}([a,b];\mathbb{R}^n)]$.

\smallskip

With these preliminary results in hand, we can now prove our main
theorem on delay equations:
\begin{thm}
\label{del:t2.1}Let $x$ be a path satisfying Hypotheses \ref{del:h1.1} and
\ref{del:h1.2}, let $\sigma:\mathbb{R}^{n,q+1}\rightarrow
\mathbb{R}^{n,d}$ be a $C^4_b$-function and let $\xi\in
\mathcal{C}_1^{3\gamma}([-r,0];\mathbb{R}^n)$. Then

\smallskip

\noindent
(i) Equation (\ref{def-delay-eq-intg}) admits a unique solution
$y$ in
$\mathcal{D}_{\kappa,\xi_0,\sigma_r(\xi_0)}([0,T];\mathbb{R}^n)$,
for any $\kappa<\gamma$ such that $3\kappa+\gamma>1$ and any
$T>0$, where
$\sigma_r(\xi_0)\triangleq\sigma(\xi_0,\xi_{-r_1},\ldots,\xi_{-r_{q}})$.

\smallskip

\noindent
(ii)The map
$$
\Big(\xi,x,\{\mathbf{x}^\mathbf{2}(r_{i''})\}_{0\leq i''\leq
q},\{\mathbf{x}^\mathbf{3}(r_{j'},r_{i'})\}_{1\leq i',j'\leq
q},\{\mathbf{x}^\mathbf{3}(r_{j''}-r_{i''},r_{i''})\}_{0\leq
i'',j''\leq q}\Big)\mapsto y
$$
is locally Lipschitz continuous from
$$
{C}_1^{3\gamma}([-r,0];\mathbb{R}^n)\times\mathcal{C}_1^\gamma([0,T];\mathbb{R}^d)\times
\big(\mathcal{C}_2^{2\gamma}([0,T];\mathbb{R}^{d,d})\big)^{q+1}
\times
\big(\mathcal{C}_2^{3\gamma}([0,T];\mathbb{R}^{d,d,d})\big)^{2q^2+2q+1}
$$
to $\mathcal{C}_1^\kappa([0,T];\mathbb{R}^n$, in the following sense:
let $\tilde x$ be another driving rough path with corresponding
delayed L\'evy area and doubly delayed volume element
$\mathbf{\tilde x}^\mathbf{2}(v)$, $\mathbf{\tilde
x}^\mathbf{3}(v'',v)$, respectively, where $v''=v' \text{ or
}v'-v$, with $v,v'\in\{r_q,\ldots,r_0\}$, and $\tilde \xi$
another initial condition. Then, for every $M>0$, there exists a
constant $K_M>0$ such that the upper bound
\begin{align*}
&\mathcal{N}[y-\tilde y;\mathcal{C}_1^\kappa([0,T];\mathbb{R}^n)]\leq K_M
\Big\{\mathcal{N}[\xi-\tilde
\xi;\mathcal{C}_1^{3\gamma}([0,T];\mathbb{R}^n)]+\mathcal{N}[x-\tilde
x;\mathcal{C}_1^\gamma([0,T];\mathbb{R}^n)]\\
&
\quad+\sum_{i''=0}^{q}\mathcal{N}[\mathbf{x}^\mathbf{2}(r_{i''})-\mathbf{\tilde
x}^\mathbf{2}(r_{i''});\mathcal{C}_2^{2\gamma}([0,T];\mathbb{R}^{d,d})]\\
&
\quad+\sum_{i',j'=1}^{q}\mathcal{N}[\mathbf{x}^\mathbf{3}(r_{j'},r_{i'})-\mathbf{\tilde
x}^\mathbf{3}(r_{j'},r_{i'});\mathcal{C}_2^{3\gamma}([0,T];\mathbb{R}^{d,d,d})]\\
&
\quad+\sum_{i'',j''=0}^{q}\mathcal{N}[\mathbf{x}^\mathbf{3}(r_{j''}-r_{i''},r_{i''})-\mathbf{\tilde
x}^\mathbf{3}(r_{j''}-r_{i''},r_{i''});\mathcal{C}_2^{3\gamma}([0,T];\mathbb{R}^{d,d,d})]\Big\}.
\end{align*}
holds for all tuples
\begin{align*}
&\big(\xi,x,\{\mathbf{x}^\mathbf{2}(r_{i''})\}_{0\leq i''\leq
q},\{\mathbf{x}^\mathbf{3}(r_{j'},r_{i'})\}_{1\leq i',j'\leq
q},\{\mathbf{x}^\mathbf{3}(r_{j''}-r_{i''},r_{i''})\}_{0\leq
i'',j''\leq q}\big),\\
&\big(\tilde \xi,\tilde x,\{\mathbf{\tilde
x}^\mathbf{2}(r_{i''})\}_{0\leq i''\leq q},\{\mathbf{\tilde
x}^\mathbf{3}(r_{j'},r_{i'})\}_{1\leq i',j'\leq
q},\{\mathbf{\tilde
x}^\mathbf{3}(r_{j''}-r_{i''},r_{i''})\}_{0\leq i'',j''\leq
q}\big),
\end{align*}
satisfying the boundedness condition
\begin{align*}
&\mathcal{N}[\xi;\mathcal{C}_1^{3\gamma}([0,T];\mathbb{R}^n)]+\mathcal{N}[\tilde
\xi;\mathcal{C}_1^{3\gamma}([0,T];\mathbb{R}^n)]
+\mathcal{N}[x;\mathcal{C}_1^\gamma([0,T];\mathbb{R}^n)]+\mathcal{N}[\tilde
x;\mathcal{C}_1^\gamma([0,T];\mathbb{R}^n)]\\
&
+\sum_{i''=0}^{q}\mathcal{N}[\mathbf{x}^\mathbf{2}(r_{i''});\mathcal{C}_2^{2\gamma}([0,T];\mathbb{R}^{d,d})]+\sum_{i''=0}^{q}\mathcal{N}[\mathbf{\tilde
x}^\mathbf{2}(r_{i''});\mathcal{C}_2^{2\gamma}([0,T];\mathbb{R}^{d,d})]\\
&
+\sum_{i',j'=1}^{q}\mathcal{N}[\mathbf{x}^\mathbf{3}(r_{j'},r_{i'});\mathcal{C}_2^{3\gamma}([0,T];\mathbb{R}^{d,d,d})]+\sum_{i',j'=1}^{q}\mathcal{N}[\mathbf{\tilde
x}^\mathbf{3}(r_{j'},r_{i'});\mathcal{C}_2^{3\gamma}([0,T];\mathbb{R}^{d,d,d})]\\
&+\sum_{i'',j''=0}^{q}\mathcal{N}[\mathbf{x}^\mathbf{3}(r_{j''}-r_{i''},r_{i''});\mathcal{C}_2^{3\gamma}([0,T];\mathbb{R}^{d,d,d})]\\
&+\sum_{i'',j''=0}^{q}\mathcal{N}[\mathbf{\tilde
x}^\mathbf{3}(r_{j''}-r_{i''},r_{i''});\mathcal{C}_2^{3\gamma}([0,T];\mathbb{R}^{d,d,d})]\leq
M.
\end{align*}
\end{thm}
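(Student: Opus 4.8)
The strategy is the classical one for rough differential equations (see \thmref{t2.1} and \cite{Gu04,NNT07,GT08}): recast the equation as a fixed point problem for the map $\Gamma(\cdot,\tilde z)$ on a short interval, solve it by Banach's theorem, and then patch the local solutions. The only genuinely new feature is that the delay forces the patching to be run on two nested scales.

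\emph{Local existence and uniqueness.} Fix $[a,b]$ with $b-a=\tau\le1$ and a \emph{given} history $\tilde z\in\mathcal{D}_{\kappa,\tilde\alpha,\tilde\beta}([a-r_q,b-r_1];\mathbb{R}^n)$, and set $\beta=\sigma(\alpha,\tilde z_{a-r_1},\ldots,\tilde z_{a-r_q})$, so that Proposition \ref{del:p1.3}(ii) guarantees that $\Gamma(\cdot,\tilde z)$ maps $\mathcal{D}_{\kappa,\alpha,\beta}([a,b];\mathbb{R}^n)$ into itself, preserving the cascade relation (\ref{del:1.1b}) between $z$, $\zeta^1$ and the $\zeta^{(2,i')}$, i.e. landing in the correct fiber. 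From the cubic bound (\ref{del:2.4}), arguing as in (\ref{2.4}), the closed ball $B_M=\{z;\ z_a=\alpha,\ \zeta^1_a=\beta,\ \mathcal{N}[z;\mathcal{D}_{\kappa,\alpha,\beta}([a,b];\mathbb{R}^n)]\le M\}$ is left invariant by $\Gamma(\cdot,\tilde z)$ for $M$ large (depending on $\mathcal{N}[\tilde z;\mathcal{D}_{\kappa,\tilde\alpha,\tilde\beta}]$) and $\tau$ small; from the Lipschitz bound (\ref{del:2.6}), whose exponent $\kappa\wedge(\gamma-\kappa)$ is strictly positive, $\Gamma(\cdot,\tilde z)$ is a strict contraction on $B_M$ after $\tau$ is reduced further. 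Since $\mathcal{D}_{\kappa,\alpha,\beta}([a,b];\mathbb{R}^n)$ is complete for the metric $\mathcal{N}[z^{(1)}-z^{(2)};\mathcal{D}_{\kappa,0,0}([a,b];\mathbb{R}^n)]$, Banach's theorem yields a unique local solution of (\ref{def-delay-eq-intg}) for this history.

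\emph{Globalisation.} Build $y$ on $[0,T]$ by induction over $I_k=[kr_1,(k+1)r_1]\cap[0,T]$, with the convention $y=\xi$ on $[-r_q,0]$. If $y$ is known on $[-r_q,kr_1]$, then for $t\in I_k$ every delayed argument $t-r_{i'}$, $i'\ge1$, lies in $[-r_q,kr_1]$, so the history $\tilde z=\mathfrak{s}(y)$ is a \emph{known} delayed controlled path on $[kr_1-r_q,(k+1)r_1-r_1]$ — that $\mathfrak{s}(y)$ is again of this type follows by inspection of (\ref{del:1.1})--(\ref{del:1.1b}), and on $[-r_q,0]$ it is controlled with a degenerate (remainder-only) decomposition since $\xi\in\mathcal{C}_1^{3\gamma}$. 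Its norm being finite, the local result applies with a step $\tau_k$ depending on that norm; subdividing $I_k$ into finitely many intervals of length $\tau_k$ and concatenating the local solutions — the endpoint values of $z,\zeta^1,\zeta^{(2,i')}$ on one sub-interval furnishing the initial data on the next, consistency of the decomposition being read off the defining relations of $\mathcal{D}_{\kappa,\cdot,\cdot}$ — produces $y$ on $[-r_q,(k+1)r_1]$. After $\lceil T/r_1\rceil+1$ steps one reaches $[0,T]$, and uniqueness propagates interval by interval in the same way; this proves (i).

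\emph{Continuous dependence, and the main difficulty.} For (ii) one runs the same nested induction on the difference of two solutions $y,\tilde y$ associated with data $(\xi,x,\{\mathbf{x}^\mathbf{2}(r_{i''})\},\{\mathbf{x}^\mathbf{3}(\cdot,\cdot)\})$ and $(\tilde\xi,\tilde x,\ldots)$. On each short interval, the difference of the corresponding local solutions is estimated by combining the local Lipschitz continuity of $T_\sigma$ in all its arguments (Propositions \ref{del:p1.1}--\ref{del:p1.2}) with the Lipschitz continuity of $\mathcal{J}$ in its integrand together with a continuity estimate for $\mathcal{J}$ in the driving data $x,\mathbf{x}^\mathbf{2}(\cdot),\mathbf{x}^\mathbf{3}(\cdot,\cdot)$ — the delay analogue of Proposition \ref{p1.3}, proved the same way — obtaining a bound in terms of $\mathcal{N}[z^{(1)}-z^{(2)}]$, of the difference of histories $\mathcal{N}[\mathfrak{s}(y)-\mathfrak{s}(\tilde y)]$, and of the differences of the driving data, with prefactors polynomial in the norms of all objects involved. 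Under the hypothesis that all these norms are $\le M$ the prefactors are bounded by a constant depending only on $M,\sigma,T,\kappa,\gamma$; absorbing the $\mathcal{N}[z^{(1)}-z^{(2)}]$ term by taking the sub-interval length small, summing the finitely many sub-interval estimates (a discrete Gronwall/geometric-series argument), and iterating over the finitely many $I_k$ — noting that $\mathcal{N}[\mathfrak{s}(y)-\mathfrak{s}(\tilde y)]$ on $I_k$ is majorised by the bound already obtained on earlier intervals plus $\mathcal{N}[\xi-\tilde\xi;\mathcal{C}_1^{3\gamma}]$ — yields the announced inequality with $K_M$ depending only on $M,\sigma,T,\kappa,\gamma$. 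The conceptual content is slight; the real obstacle is the bookkeeping of this double induction: checking that concatenation of (doubly) delayed controlled paths preserves all the structural relations (\ref{del:1.1})--(\ref{del:1.1.3}), and, above all, propagating the local Lipschitz estimates through both the sub-interval patching and the $r_1$-interval induction without the constants exploding — which is precisely what the boundedness condition $\le M$ in (ii) is designed to permit.
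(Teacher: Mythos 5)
Your proof takes essentially the same route as the paper's: local existence and uniqueness via a fixed-point argument for $\Gamma(\cdot,\tilde z)$ using the cubic invariance bound (\ref{del:2.4}) to preserve a ball $B_M$ and the Lipschitz bound (\ref{del:2.6}) for the contraction, followed by the same two-scale patching (sub-intervals of length $\tau$ inside each $r_1$-block, and then induction over $r_1$-blocks), with the crucial observation that the step size is uniform inside a fixed $r_1$-block because the history $\mathfrak{s}(y)$ there is already known and has a fixed norm. Your sketch of (ii) also mirrors the paper's (which it declines to detail), combining local Lipschitz continuity of $T_\sigma$ and of $\mathcal{J}$ in all its arguments with a discrete Gronwall over the same nested decomposition.
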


\begin{proof}
With the previous algebraic structures and computations in hand, the proof of this theorem follows the lines of \cite[Theorem 4.2]{NNT07}. We shall give some hints for the proof of the existence-uniqueness result, which is based on a fixed point argument for the map $\Gamma$ defined above, for sake of completeness.

\smallskip

Without loss of generality suppose that $T=Nr_1$, where we recall that $r_1$ is the smallest delay in
(\ref{def-delay-eq-intg}). We shall
construct the solution of our delay equation by
induction over the intervals $[0,r_1]$, $[0,2r_1]$, $\ldots$,
$[0,Nr_1]$.

\smallskip

Let us first show that equation
(\ref{def-delay-eq-intg}) has a solution on the interval $[0,r_1]$.
To this purpose, define
$$
\tilde \tau_1=\Big(\frac{1}{c_1
M_1^2}-\frac{1}{M_1^3}\Big)^\frac{1}{\gamma-\kappa}\wedge
r_1\wedge 1
$$
where
$M_1>c_1=c_{\kappa,\gamma,\sigma,T}(1+\mathcal{N}^3[\xi;\mathcal{C}_1^{3\gamma}([-r_{q},0];\mathbb{R}^n)])$.
In addition, choose $\tau_1 \in [0,\tilde \tau_1]$ and $N_1\in
\mathbb{N}$ such that $N_1\tau_1=r_1$, and define
$$
I_{k',1}=[(k'-1)\tau_1,k'\tau_1],\qquad k'=1,\ldots,N_1.
$$
Finally, consider the following map:
let
$\Gamma_{1,1}:\mathcal{D}_{\kappa,\xi_0,\sigma_r(\xi_0)}(I_{1,1};\mathbb{R}^n)\rightarrow
\mathcal{D}_{\kappa,\xi_0,\sigma_r(\xi_0)}(I_{1,1};\mathbb{R}^n)$
be given by $\hat z=\Gamma_{1,1}(z)$, where
$$
(\delta  \hat z^i)_{st}=\mathcal{J}_{st}(T_\sigma^{i
j}(z,\xi)dx^j)
$$
for $0\leq s < t \leq \tau_1$.
Notice then that if $z^{(1,1)}$ is a fixed point of the map
$\Gamma_{1,1}$, then $z^{(1,1)}$ solves equation
(\ref{def-delay-eq-intg}) on the interval $I_{1,1}$. Therefore, we
shall prove that such a fixed point exists.

\smallskip

First, owing to (\ref{del:2.4}) we get the estimate
\begin{align}
\label{del:2.7}
\mathcal{N}[\Gamma_{1,1}(z);\mathcal{D}_{\kappa,\xi_0,\sigma_r(\xi_0)}(I_{1,1};\mathbb{R}^n)]\leq
c_1\big(1+\tau_1^{\gamma-\kappa}\mathcal{N}^3[z;\mathcal{D}_{\kappa,\xi_0,\sigma_r(\xi_0)}(I_{1,1};\mathbb{R}^n)]\big).
\end{align}
Therefore, thanks to our previous choice of $\tau_1$, we obtain that the ball
\begin{equation}
\label{del:2.8} B_{M_1}=\{z\in
\mathcal{D}_{\kappa,\xi_0,\sigma_r(\xi_0)}(I_{1,1};\mathbb{R}^n);\;\mathcal{N}[z;\mathcal{D}_{\kappa,\xi_0,\sigma_r(\xi_0)}(I_{1,1};\mathbb{R}^n)]\leq
M_{1}\}
\end{equation}
is invariant under $\Gamma_{1,1}$.
On the other hand, by changing $\tau_1$ to a smaller value (and
then $N_1$ accordingly) if necessary, observe that $\Gamma_{1,1}$
also is a contraction on $B_{M_1}$, see (\ref{del:2.6}). Thus,
applying the Fixed Point Theorem, it is easily shown that there exists a
unique solution $z^{(1,1)}$ to equation (\ref{def-delay-eq-intg}) on
the interval $I_{1,1}$.

\smallskip

If $\tau_1=r_1$, we have thus obtained the existence and uniqueness of a solution in the interval $[0,r_1]$.
Otherwise, define the map
$$
\Gamma_{2,1}:\mathcal{D}_{\kappa,z_{r_1}^{(1,1)},\sigma_r(
z_{r_1}^{(1,1)})}(I_{2,1};\mathbb{R}^n)\longrightarrow
\mathcal{D}_{\kappa,z_{r_1}^{(1,1)},\sigma_r(
z_{r_1}^{(1,1)})}(I_{2,1};\mathbb{R}^n)
$$ given by $\hat
z=\Gamma_{2,1}(z)$, where $\sigma_r(
z_{r_1}^{(1,1)})\triangleq\sigma(z_{r_1}^{(1,1)},\xi_0,\ldots,\xi_{-r_{q-1}})$
and
$$
(\delta  \hat z^i)_{st}=\mathcal{J}_{st}(T_\sigma^{i
j}(z,\xi)dx^j)
$$
for $\tau_1\leq s < t \leq 2\tau_1$. Since $\tau_1<r_1$, the following upper bound still holds true:
\begin{align}
\label{del:2.9}
\mathcal{N}[\Gamma_{2,1}(z);\mathcal{D}_{\kappa,z_{r_1}^{(1,1)},\sigma_r(z_{r_1}^{(1,1)})}(I_{2,1};\mathbb{R}^n)]\leq
c_1\big(1+\tau_1^{\gamma-\kappa}\mathcal{N}^3[z;\mathcal{D}_{\kappa,z_{r_1}^{(1,1)},\sigma_r(z_{r_1}^{(1,1)})}(I_{2,1};\mathbb{R}^n)]\big)
\end{align}
and we obtain, resorting to the same fixed point argument as above, the
existence of a unique solution $z^{(2,1)}$ to equation
(\ref{def-delay-eq-intg}) on the interval $I_{2,1}$.
Repeating this step as often as necessary, which is possible since
the estimates on the norms of the mappings $\Gamma_{l,1}$,
$l=1,\ldots,N_1$ are of the same type as (\ref{del:2.4}), that is, the
constant $c_1$ in (\ref{del:2.9}) does not change according to the iteration step, we obtain that
$z=\sum_{l=1}^{N_1}z^{(l,1)}\Ind_{I_{l,1}}$ is the unique solution
to the equation (\ref{def-delay-eq-intg}) on the interval $[0,r_1]$.

\smallskip

The patching of solutions defined on different intervals of the form $I_{l,k}$ is then a slight elaboration of the computations corresponding to \cite[Theorem 4.2]{NNT07}, and this step is left to the reader. The continuity of the Itô map is follows also the steps of \cite{NNT07}, except for the huge number of terms we have to deal with in the current situation. We prefer to omit this step for sake of conciseness.

\end{proof}

\section{Application to the fractional Brownian motion}\label{sec:fbm}

All the previous constructions rely on the specific assumptions
that we have made on the process $x$. In this section, we
prove how our results can be applied to the fractional Brownian
motion. More specifically, we first recall some basic definitions about fBm, and then define the delayed Lévy area $\mathbf{B^2}$. We shall then turn to the definition of the volume $\mathbf{B^3}$, which is the main difficulty in order to go from the case $H>1/3$ treated in \cite{NNT07} to our rougher situation.

\subsection{Basic facts on fractional Brownian motion}
Recall that a $d$-dimensional fBm with Hurst parameter $H\in(0,1)$
defined on the real line is a centered Gaussian process
$$
B=\{B_t=(B_t^1,\ldots,B_t^d);\;t\in\mathbb{R}\},
$$
where $B^1,\ldots,B^d$ are $d$ independent 1-dimensional fBm, that
is, each $B^i$ is a centered Gaussian process with continuous
sample paths and covariance function
\begin{equation}
\label{cov-func}
R_H(t,s)=\mathbb{E}(B_t^iB_s^i)=\frac{1}{2}(|t|^{2H}+|s|^{2H}-|t-s|^{2H}),
\end{equation}
for all $i\in\{1,\ldots,d\}$. In the sequel, all the random
variables we deal with are defined on a complete probability space
$(\Omega,\cf,\mathbb{P})$, and we assume that $\cf$ is generated
by the random variables $(B_t;\, t\in \mathbb{R})$. The fBm
verifies the following two important properties:

\smallskip

\begin{itemize}
\item Scaling property: for any $c>0$, $B^{(c)}=c^HB_{\cdot/c}$ is
a fBm,

\smallskip

\item Stationarity property: for any $h\in \mathbb{R}$,
$B_{\cdot+h}-B_h$ is a fBm.
\end{itemize}

\smallskip

\noindent
Notice that we work with a fBm indexed by $\mathbb{R}$ for sake
of simplicity as in \cite{NNT07}, since this  allows some more
elegant calculations for the definitions of the double delayed
L\'evy area and volume, respectively. Furthermore, since the case $H>1/2$ or the Brownian case $H=1/2$ are less demanding than the rougher case, we shall mainly focus in this section on the range of parameter $H<1/2$.

\subsubsection{Gaussian structure of $B$}
Let us give a few facts about the Gaussian structure of fractional
Brownian motion, following
Chapter 5 of \cite{Nu06}. All the considerations in this direction will concern a  1-dimensional fBm $B$, which will be enough for our applications.

\smallskip

Let $\mathcal{E}$ be the set of step-functions on
$\mathbb{R}$ with values in $\mathbb{R}$. Consider the Hilbert
space $\mathcal{H}$ defined as the closure of $\mathcal{E}$ with
respect to the scalar product induced by
\begin{align*}
\left\langle \Ind_{[t,t']}, \Ind_{[s,s']}
\right\rangle_{\mathcal{H}}=
R_H(t',s')-R_H(t',s)-R_H(t,s')+R_H(t,s),
\end{align*}
for any $-\infty<s<s'<+\infty$ and $-\infty<t<t'<+\infty$, and
where $R_H(t,s)$ is given by (\ref{cov-func}). The mapping
$$
\Ind_{[t,t']} \mapsto  B_{t'}  - B_t
$$
can be extended to an isometry between $\mathcal{H}$ and the
Gaussian space $H_{1}(B)$ associated with $B$. We denote this
isometry by $\varphi \mapsto B(\varphi)$.

\smallskip

The spaces $\ch$ and $H_1(B)$ can be characterized more precisely in the following way: first, we notice that a 1-dimensional fBm defined on the real
line, with $H\neq1/2$, has the following integral representation
in terms of a Wiener process $W$ defined on $\mathbb{R}$ (See \cite[Proposition 7.2.6]{ST94} for details):
\begin{align}
\label{moving_average_rep}
B_t=\frac{1}{C_1(H)}\int_{\mathbb{R}}\left[(t-s)_{+}^{H-1/2}-(-s)_{+}^{H-1/2}\right]dW_s,\quad
t\in\mathbb{R},
\end{align}
where
\begin{equation}\label{eq:61a}
C_1(H)=\left(\int_0^\infty\left[(1+s)^{H-1/2}-s^{H-1/2}\right]^2ds+\frac{1}{2H}\right)^{1/2},
\end{equation}
and where $a_+$ stands for the positive part of a real number $a$, namely $a_+=\Ind_{\R_+}(a)\, a$. Using the representation (\ref{moving_average_rep}), the authors
in \cite{PT00} define the following stochastic integral of a
deterministic function with respect to a 1-dimensional fBm $B$:
\begin{align*}
\int_\mathbb{R}f(u)dB_u=\frac{\Gamma\left(H+1/2\right)}{C_1(H)}\left\{
\begin{array}{ll}
\int_{\mathbb{R}}\left(\mathcal{D}_-^{1/2-H}f\right)(u)dW_u,\quad &H<1/2,\\
\\
\int_{\mathbb{R}}\left(\mathcal{I}_-^{H-1/2}f\right)(u)dW_u,\quad
&H>1/2,
\end{array}
\right.
\end{align*}
provided that the stochastic integral with respect to the Wiener
process $W$ makes sense, and where
\begin{align}
&\left(\mathcal{D}_-^{\alpha}f\right)(u)=\frac{\alpha}{\Gamma(1-\alpha)}\int_0^\infty\frac{f(r)-f(u+r)}{r^{1+\alpha}}dr,\label{deriv-frac}\\
&\left(\mathcal{I}_-^{\alpha}f\right)(u)=\frac{1}{\Gamma(\alpha)}\int_u^\infty\frac{f(r)}{(r-u)^{1-\alpha}}dr\label{int-frac},
\end{align}
for $0<\alpha<1$. The expressions (\ref{deriv-frac}) and (\ref{int-frac}) are respectively called
right-sided fractional derivative  and right-sided fractional integral on the whole real
line. We remark that, in general,
\begin{equation*}
\left(\mathcal{D}_-^{\alpha}f\right)(u)\equiv\underset{\varepsilon\rightarrow
0}{\lim}\frac{\alpha}{\Gamma(1-\alpha)}\int_\varepsilon^\infty\frac{f(r)-f(u+r)}{r^{1+\alpha}}dr.
\end{equation*}
We also notice that
\begin{equation}
\label{rel-frac-deriv-int}
\left(\mathcal{I}_-^{\alpha}(\left(\mathcal{D}_-^{\alpha}f\right)\right)(u)=\left(\mathcal{D}_-^{\alpha}\left(\mathcal{I}_-^{\alpha}f\right)\right)(u)
=f(u).
\end{equation}

\smallskip

When $f$ is a function defined on an interval $[a,b]$ with $-\infty<a<b<\infty$, extend $f$ by setting $f^\star=f\Ind_{[a,b]}$. Define then
\begin{align}
&\left(\mathcal{D}_-^{\alpha}f^\star\right)(u)=\left(\mathcal{D}_{-b}^{\alpha}f\right)(u)=\frac{f(u)}{\Gamma(1-\alpha)(b-u)^\alpha}+\frac{\alpha}{\Gamma(1-\alpha)}\int_u^b\frac{f(u)-f(r)}{(r-u)^{1+\alpha}}dr,\label{deriv-frac-int}\\
&\left(\mathcal{I}_-^{\alpha}f^\star\right)(u)=\left(\mathcal{I}_{-b}^{\alpha}f\right)(u)=\frac{1}{\Gamma(\alpha)}\int_u^b\frac{f(r)}{(r-u)^{1-\alpha}}dr\label{int-frac-int},
\end{align}
for $0<\alpha<1$, $a<u<b$. The expressions (\ref{deriv-frac-int}) and (\ref{int-frac-int}) are respectively called
right-sided fractional derivative  and right-sided fractional integral on the interval $[a,b]$. In this context, as in the case of the whole line (see \cite{SKM93} for details and also \cite{Za98}), the following relation holds true:
\begin{equation*}
\left(\mathcal{D}_{-b}^{\alpha}f\right)(u)\equiv\frac{f(u)}{\Gamma(1-\alpha)(b-u)^\alpha}+\underset{\varepsilon\rightarrow
0}{\lim}\frac{\alpha}{\Gamma(1-\alpha)}\int_{u+\varepsilon}^b\frac{f(u)-f(r)}{(r-u)^{1+\alpha}}dr.
\end{equation*}

\smallskip

With these notations in hand, it is proved in \cite{PT00} that the operator
\begin{align*}
(\mathcal{K}f)(u)\equiv
\frac{\Gamma\left(H+1/2\right)}{C_1(H)}\left\{
\begin{array}{ll}
\left(\mathcal{D}_-^{1/2-H}f\right)(u),\quad &H<1/2,\\
\\
\left(\mathcal{I}_-^{H-1/2}f\right)(u),\quad &H>1/2,
\end{array}
\right.
\end{align*}
is an isometry between $\mathcal{H}$ and a closed subspace of
$L^2(\mathbb{R})$. In fact,
$$
\langle \phi,\psi \rangle_\mathcal{H}=\langle
\mathcal{K}\phi,\mathcal{K}\psi \rangle_{L^2(\mathbb{R})},
$$
for all $\phi,\psi\in \mathcal{H}$. This also allows to write $B(\varphi)$ as $W(\mathcal{K}\varphi)$ for any $\varphi\in\ch$, where $W(\mathcal{K}\varphi)$ has to be interpreted as a Wiener integral with respect to the Gaussian measure $W$. In particular, we have:
\begin{equation}\label{eq:65b}
\mathbb{E}[|B(\varphi)|^2]=\|\varphi\|_{\ch}
=\|\mathcal{K}\varphi\|_{L^2(\mathbb{R})}.
\end{equation}

\subsubsection{Malliavin calculus with respect to the fBm $B$}
Let $\mathcal{S}$ be the
set of smooth cylindrical random variables of the form
$$
F = f(B(\varphi_1), \ldots, B(\varphi_k)), \qquad \varphi_i \in
\mathcal{H}, \quad i\in\{1, \ldots, k\},
$$
where $f\in C^{\infty}(\mathbb{R}^{d,k},\mathbb{R})$ is bounded
with bounded derivatives. The derivative operator $D$ of a smooth
cylindrical random variable of the above form is defined as the
$\mathcal{H}$-valued random variable
$$
DF= \sum_{i=1}^k \frac{\partial f}{\partial x_i}
(B(\varphi_1),\ldots, B(\varphi_k)) \varphi_i.
$$
This operator is
closable from $L^p(\Omega)$ into $L^{p}(\Omega; \mathcal{H})$. As
usual, $\mathbb{D}^{1,2}$ denotes the closure of the set of smooth
random variables with respect to the norm
$$
\|F\|_{1,2}^2 = \mathbb{E}|F|^2+\mathbb{E}\|DF\|_{\mathcal{H}}^2.
$$
In particular, considering a $d$-dimensional fBm
$(B^1,\ldots,B^d)$, if $D^{B^i}F$ denotes the Malliavin derivative
of $F\in\mathbb{D}^{1,2}_{B^i}$ with respect to $B^i$, where
$\mathbb{D}^{1,2}_{B^i}$ denotes the corresponding Sobolev space,
we have $D^{B^i}B^j_{t}=\delta_{i,j}{\Ind}_{(-\infty,t]}$ for
$i,j=1, \ldots, d$, where $\delta_{i,j}$ denotes the Kronecker
symbol.

\smallskip

The divergence operator $I$ is the adjoint of the derivative
operator. If a random variable $\phi \in L^{2}(\Omega;
\mathcal{H})$ belongs to ${\rm dom}(I)$, the domain of the
divergence operator, then $I(\phi)$ is defined by the duality
relationship
\begin{equation}
\label{duality}
 \mathbb{E}(FI(\phi))= \mathbb{E} \langle D F, \phi \rangle_{\mathcal{H}},
\end{equation}
for every $F \in \mathbb{D}^{1,2}$. In additon, let us recall two
useful properties verified by $D$ and $I$:
\begin{itemize}
\item If $\phi \in {\rm dom}(I)$ and $F \in \mathbb{D}^{1,2}$ such
that $F\phi \in L^2(\Omega; \mathcal{H})$, then we have the
following integration by parts formula:
\begin{equation}
\label{parts-form} I(F\phi)=FI(\phi)- \langle DF, \phi
\rangle_{\mathcal{H}}.
\end{equation}
\item If $\phi\in \mathbb{D}^{1,2}(\mathcal{H})$, $D_r \phi \in
{\rm dom}(I)$ for all $r \in\mathbb{R}$ and $ \{I(D_r \phi)\}_{r
\in \mathbb{R}}$ is an element of $L^{2}(\Omega; \mathcal{H})$,
then
\begin{equation}
\label{deriv-int} D_rI( \phi)= \phi_r + I(D_r \phi).
\end{equation}
\end{itemize}

\smallskip

One can relate the Malliavin derivatives with respect to $B$ and $W$ through the operator $\mathcal{K}$ defined above. Indeed, relation (\ref{rel-frac-deriv-int}) shows that  $\mathcal{K}$ is invertible. This allows to state, as in the case of a
1-dimensional fBm $B$ in an interval (see for example
\cite[Section 5.2]{Nu06} and also \cite{ALN01}), the following
relations for the Malliavin derivative and divergence operators
with respect to the processes $B$ and $W$:
\begin{itemize}
\item[(i)] For any $F \in \mathbb{D}^{1,2}_W=\mathbb{D}^{1,2}$, we have:
$$
\mathcal{K}DF = D^WF,
$$
where $D^W$ denotes the derivative operator with respect to the
process W, and $\mathbb{D}^{1,2}_W$ the corresponding Sobolev
space.

\item[(ii)] ${\rm Dom} (I) = \mathcal{K}^{-1}({\rm Dom} (I^W))$,
and for any $\mathcal{H}$-valued random variable $u$ in ${\rm
Dom}(I)$ we have $I(u) = I^W(\mathcal{K}u)$, where $I^W$ denotes
the divergence operator with respect to the process $W$.
\end{itemize}

\smallskip

\noindent
In addition, we have
$\mathbb{D}^{1,2}(\mathcal{H})=(\mathcal{K}^{-1})(\mathbb{L}^{1,2})$,
where $\mathbb{L}^{1,2}=\mathbb{D}^{1,2}(L^2(\mathbb{R}))$, and
this space is included in ${\rm dom}(I^W)$. Making use of
the notations $I^W(\phi)=\int_{R}\phi_u\;dW_u$ for any
$\phi\in{\rm dom}( I^{W})$, and $I(\phi)=\int_{R}\phi_u\;dB_u$ for
any $\phi\in{\rm dom}(I)$, we can write:
$$
\int_{\mathbb{R}}\phi_u\;dB_u=\int_{\mathbb{R}}(\mathcal{K}\phi)(u)\;dW_u.
$$

\smallskip

This kind of relation also holds when one considers functions defined on an interval. Indeed, for some fixed $-\infty<a<b<\infty$, and $H<1/2$, relation (\ref{deriv-frac-int}) yields
$$
\int_a^b\phi_u\;dB_u=\int_{\mathbb{R}}\phi_u\Ind_{[a,b]}(u)\;dB_u
=\int_{\mathbb{R}}(\mathcal{K}[\phi\Ind_{[a,b]}])(u)\;dW_u
=\int_{\mathbb{R}}(\mathcal{K}^{[a,b]}\phi)(u)\;dW_u,
$$
where the operator $\mathcal{K}^{[a,b]}$ is defined by:
$$
(\mathcal{K}^{[a,b]}f)(u)\equiv
\frac{\Gamma\left(H+1/2\right)}{C_1(H)}
\left(\mathcal{D}_{-b}^{1/2-H}f\right)(u),
\quad\mbox{for}\quad a<u<b,
$$
with $C_1(H)$ defined by (\ref{eq:61a}). In case of an interval
$[a,b]$, it should also be mentioned that an important subspace of
integrable processes is the following: let $\mathcal{E}^{[a,b]}$
be the set of step-functions on $[a,b]$ with values in
$\mathbb{R}$. As in \cite[Subsection 5.2.3]{Nu06}, we consider on
this space the semi-norm
$$
\|\varphi\|_{\mathcal{H}_K([a,b])}^2=\int_a^b
\frac{\varphi_u^2}{(b-u)^{1-2H}}du+\int_a^b\left(\int_u^b\frac{|\varphi_r-\varphi_u|}{(r-u)^{3/2-H}}dr\right)^2du.
$$
Let $\mathcal{H}_K([a,b])$ be the Hilbert space defined as the
closure of $\mathcal{E}^{[a,b]}$ with respect to the previous
semi-norm. Then the space $\mathcal{H}_K([a,b])$  is continuously
included in $\mathcal{H}$, and if $\phi\in
\mathbb{D}^{1,2}(\mathcal{H}_K([a,b]))$, then $\phi\in {\rm
Dom}(I)$.

\subsubsection{Generalized stochastic integrals}
The stochastic integrals we shall use in order to define our
doubly delayed L\'evy area and volume are defined,
in a natural way, by Russo-Vallois' symmetric approximations, that
is, for a given process $\phi$:
$$
\int_a^b \phi_w \, d^\circ B^i_w =L^2-\lim_{\varepsilon\rightarrow
0} \frac{1}{2\varepsilon} \int_a^b \phi_w \,
\big(B^i_{w+\varepsilon}-B^i_{w-\varepsilon}\big)dw,
$$
provided the limit exists. It is well known that the Russo-Vallois symmetric integral
coincides with Young's integral for $H>1/2$, and with the classical Stratonovich integral in the Brownian case $H=1/2$. Since these two cases are not very demanding from a technical point of view, we will focus our efforts on the case $1/4<H<1/2$. This being said, for $v_1\in[-r,r]$, $v_2\in[0,r]$, such that
$v_1+v_2\geq 0$, we will try to define the increments $\mathbf{B^2}$ and  $\mathbf{B^3}$ as
\begin{align}
\label{del:terms}
&\mathbf{B}^\mathbf{2}_{st}(v_1,v_2)=\int_{s-v_2}^{t-v_2} d^\circ
B_u \otimes \int_{s-v_2-v_1}^{u-v_1}d^\circ B_\tau, \text{ i.e. }
(\mathbf{B}^\mathbf{2}_{st}(v_1,v_2))^{ij}=\int_{s-v_2}^{t-v_2}
d^\circ B^j_u \int_{s-v_2-v_1}^{u-v_1}d^\circ
B^i_\tau \notag\\
&\mathbf{B}^\mathbf{3}_{st}(v_1,v_2)=\int_s^t d^\circ B_w
\otimes\int_{s-v_2}^{w-v_2} d^\circ B_u \otimes
\int_{s-v_2-v_1}^{u-v_1}d^\circ
B_\tau, \notag\\
&\hspace{4cm}\text{ i.e. }(\mathbf{B}^\mathbf{3}_{st}(v_1,v_2))^{ijk}=\int_s^t
d^\circ B^k_w\int_{s-v_2}^{w-v_2} d^\circ B^j_u
\int_{s-v_2-v_1}^{u-v_1}d^\circ B^i_\tau,
\end{align}
for all $i,j,k\in\{1,\ldots,d\}$, $0\leq s<t\leq T<\infty$.

\smallskip

Interestingly enough, one can establish the existence of symmetric integrals thanks to some Malliavin calculus criterions:

\begin{prop}
\label{integral-rep} Let $\phi$ be a stochastic process such that
$\phi\Ind_{[a,b]}\in\mathbb{D}^{1,2}(\mathcal{H}_K([a,b]))$, for
all $-\infty<a<b<\infty$. Suppose also that
$$
{\rm Tr}_{[a,b]}D\phi:=L^2-\lim_{\varepsilon\rightarrow
0}\frac{1}{2\varepsilon}\int_a^{b} \langle
D\phi_u,\Ind_{[u-\varepsilon,u+\varepsilon]}\rangle_\mathcal{H}du
$$
is an almost surely finite random variable. Then $\int_a^b \phi_u d^\circ B^i_u$ exists, and verifies
$$
\int_a^b \phi_u d^\circ B^i_u=I(\phi\Ind_{[a,b]})+{\rm
Tr}_{[a,b]}D\phi.
$$
\end{prop}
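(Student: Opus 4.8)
The plan is to relate the Russo--Vallois symmetric integral to the divergence operator $I$ by making the regularization procedure explicit, and then pass to the limit using the Malliavin calculus assumptions. First I would write, for fixed $\varepsilon>0$,
\begin{equation*}
\frac{1}{2\varepsilon}\int_a^b \phi_w\,\big(B_{w+\varepsilon}^i-B_{w-\varepsilon}^i\big)\,dw
=\frac{1}{2\varepsilon}\int_a^b \phi_w\, B^i(\Ind_{[w-\varepsilon,w+\varepsilon]})\,dw,
\end{equation*}
recalling that $B_{w+\varepsilon}^i-B_{w-\varepsilon}^i=B^i(\Ind_{[w-\varepsilon,w+\varepsilon]})$ in the notation of the Gaussian structure section. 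Since $\phi\Ind_{[a,b]}\in\D^{1,2}(\ch)$, each product $\phi_w B^i(\Ind_{[w-\varepsilon,w+\varepsilon]})$ is amenable to the integration by parts formula (\ref{parts-form}): applying it with $F=\phi_w$ and $\phi=\Ind_{[w-\varepsilon,w+\varepsilon]}$ gives
\begin{equation*}
\phi_w\, I(\Ind_{[w-\varepsilon,w+\varepsilon]})
= I\big(\phi_w\Ind_{[w-\varepsilon,w+\varepsilon]}\big)+\langle D\phi_w,\Ind_{[w-\varepsilon,w+\varepsilon]}\rangle_{\ch}.
\end{equation*}
Integrating over $w\in[a,b]$ and dividing by $2\varepsilon$, and using the closedness of $I$ to pull the divergence out of the $dw$-integral, one obtains
\begin{equation*}
\frac{1}{2\varepsilon}\int_a^b \phi_w\,\big(B_{w+\varepsilon}^i-B_{w-\varepsilon}^i\big)\,dw
= I\!\left(\frac{1}{2\varepsilon}\int_a^b \phi_w\,\Ind_{[w-\varepsilon,w+\varepsilon]}\,dw\right)
+\frac{1}{2\varepsilon}\int_a^b \langle D\phi_w,\Ind_{[w-\varepsilon,w+\varepsilon]}\rangle_{\ch}\,dw.
\end{equation*}

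Next I would identify the limits of the two terms on the right-hand side as $\varepsilon\to0$. The second term converges in $L^2$ to ${\rm Tr}_{[a,b]}D\phi$ by the very hypothesis of the proposition. For the first term, I would show that the $\ch$-valued random variables $\psi_\varepsilon:=\frac{1}{2\varepsilon}\int_a^b \phi_w\Ind_{[w-\varepsilon,w+\varepsilon]}\,dw$ converge to $\phi\Ind_{[a,b]}$ in $L^2(\Omega;\ch)$; since $I$ is a closed operator, this yields $I(\psi_\varepsilon)\to I(\phi\Ind_{[a,b]})$ in $L^2(\Omega)$. The convergence $\psi_\varepsilon\to\phi\Ind_{[a,b]}$ is where the space $\ch_K([a,b])$ and the assumption $\phi\Ind_{[a,b]}\in\D^{1,2}(\ch_K([a,b]))$ enter: one estimates $\|\psi_\varepsilon-\phi\Ind_{[a,b]}\|_{\ch}$, or rather $\|\mathcal{K}^{[a,b]}(\psi_\varepsilon-\phi\Ind_{[a,b]})\|_{L^2(\R)}$ via (\ref{eq:65b}), using the explicit form of the fractional derivative operator $\mathcal{D}_{-b}^{1/2-H}$ and the control of increments of $\phi$ built into the $\ch_K([a,b])$-norm. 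Combining the two limits gives exactly $\int_a^b\phi_u\,d^\circ B_u^i = I(\phi\Ind_{[a,b]})+{\rm Tr}_{[a,b]}D\phi$, and in particular the Russo--Vallois limit exists.

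The main obstacle I expect is the justification that $\psi_\varepsilon\to\phi\Ind_{[a,b]}$ in $L^2(\Omega;\ch)$: one must control the smoothing operation $\phi\mapsto\frac{1}{2\varepsilon}\int\phi_w\Ind_{[w-\varepsilon,w+\varepsilon]}\,dw$ in the non-trivial norm of $\ch$, which for $H<1/2$ is genuinely weaker than an $L^2(dt)$ norm and involves the singular kernel $(r-u)^{-(3/2-H)}$ appearing in $\|\cdot\|_{\ch_K([a,b])}$. Here one uses that $\phi\Ind_{[a,b]}\in\D^{1,2}(\ch_K([a,b]))$ together with the continuous inclusion $\ch_K([a,b])\hookrightarrow\ch$ recalled at the end of the previous subsection, and a dominated-convergence argument on the regularized kernels. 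A secondary but routine technical point is the commutation of $I$ with the Bochner integral $\int_a^b\cdots\,dw$, which follows from the closedness of $I$ and a Riemann-sum approximation. All the remaining manipulations are standard applications of (\ref{duality}), (\ref{parts-form}) and the isometry properties of $\mathcal{K}$, so I would not dwell on them.
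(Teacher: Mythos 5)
The paper states Proposition~\ref{integral-rep} without proof; it is of the type established in Al\`os--Le\'on--Nualart~\cite{ALN01} and in the Russo--Vallois literature, and is quoted here as a known criterion. There is therefore no paper argument to compare yours against, and I assess your plan on its own terms. The overall structure you propose is the standard one and is sound: write the symmetric regularization, observe $B^i_{w+\varepsilon}-B^i_{w-\varepsilon}=I(\Ind_{[w-\varepsilon,w+\varepsilon]})$, apply the integration-by-parts formula~(\ref{parts-form}) with $F=\phi_w$ to split off the inner-product term, commute $I$ with the Bochner integral in $w$, and pass to the limit separately in the two pieces. The sign bookkeeping in your use of~(\ref{parts-form}) is correct, and the trace term converges to ${\rm Tr}_{[a,b]}D\phi$ by hypothesis as you say.

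There is, however, a genuine gap in the divergence term. You assert that $\psi_\varepsilon:=\frac{1}{2\varepsilon}\int_a^b\phi_w\Ind_{[w-\varepsilon,w+\varepsilon]}\,dw\to\phi\Ind_{[a,b]}$ in $L^2(\Omega;\ch)$ and that ``since $I$ is a closed operator, this yields $I(\psi_\varepsilon)\to I(\phi\Ind_{[a,b]})$ in $L^2(\Omega)$.'' This is not what closedness gives. A closed operator only says: \emph{if} $\psi_\varepsilon\to\psi$ and $I(\psi_\varepsilon)\to G$ (both in the relevant norms), \emph{then} $\psi\in{\rm dom}(I)$ and $I(\psi)=G$. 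It does not convert convergence of arguments into convergence of images; $I$ is unbounded on $L^2(\Omega;\ch)$, and one can easily have $\psi_\varepsilon\to\psi$ in $L^2(\Omega;\ch)$ while $I(\psi_\varepsilon)$ fails to converge at all (think of non-adapted approximations whose Malliavin derivatives blow up). What is needed is convergence of $\psi_\varepsilon$ to $\phi\Ind_{[a,b]}$ in the graph norm, i.e.\ in $\mathbb{D}^{1,2}(\ch)$: one must control both $\mathbb{E}\|\psi_\varepsilon-\phi\Ind_{[a,b]}\|_\ch^2$ and $\mathbb{E}\|D\psi_\varepsilon-D(\phi\Ind_{[a,b]})\|_{\ch\otimes\ch}^2$, and then invoke the continuity of $I:\mathbb{D}^{1,2}(\ch)\to L^2(\Omega)$ (the Skorokhod/Meyer-type estimate $\mathbb{E}[I(u)^2]\le c\,\|u\|_{\mathbb{D}^{1,2}(\ch)}^2$). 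The hypothesis $\phi\Ind_{[a,b]}\in\mathbb{D}^{1,2}(\ch_K([a,b]))$, together with the continuous embedding $\ch_K([a,b])\hookrightarrow\ch$, is exactly what makes both pieces of this estimate available, so the gap is repairable along the lines you sketch for the $\ch$-norm alone; but as written the step from $L^2(\Omega;\ch)$ convergence of $\psi_\varepsilon$ to $L^2$ convergence of $I(\psi_\varepsilon)$ does not go through. The commutation of $I$ with the $dw$-integral that you flag as routine also relies on the same $\mathbb{D}^{1,2}$-type control rather than on closedness per se, though this one is indeed standard once the integrability is in place.
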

Furthermore, the following algebraic relation is trivially satisfied for this kind of integrals:
\begin{lem}\label{lem:5.2}
Let $\alpha=\{\alpha_w,\;w \in
[a,b]\}$ be a stochastic process such that its symmetric
Russo-Vallois integral with respect to a 1-dimensional fractional
Brownian motion $B$ exists, and let $F$ be a random variable. Then
$F\alpha$ is integrable with respect to $B$ in the Russo-Vallois
symmetric integral sense and
$\int_a^b F\alpha_w \, d^\circ B_w=F\int_a^b \alpha_w \, d^\circ B_w$.
\end{lem}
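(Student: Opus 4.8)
The statement follows at once from the definition of the symmetric Russo-Vallois integral recalled just above, and the plan is merely to unwind that definition. Fix $-\infty<a<b<\infty$, a process $\alpha$ whose symmetric integral with respect to a one-dimensional fBm $B$ exists, and a random variable $F$; abbreviate $J(\alpha):=\int_a^b\alpha_w\,d^\circ B_w$ and, for $\varepsilon>0$, set
$$
J_\varepsilon(\psi)=\frac{1}{2\varepsilon}\int_a^b \psi_w\,\big(B_{w+\varepsilon}-B_{w-\varepsilon}\big)\,dw .
$$
By hypothesis $J_\varepsilon(\alpha)\to J(\alpha)$ in $L^2(\Omega)$ as $\varepsilon\to 0$. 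The key observation, which is indeed trivial, is that $F$ does not depend on the integration variable $w$ and can therefore be taken out of the Lebesgue integral $dw$; hence, almost surely, $J_\varepsilon(F\alpha)=F\,J_\varepsilon(\alpha)$ for every $\varepsilon>0$.

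It then remains to pass to the limit $\varepsilon\to0$ in this identity. First I would use that $L^2$-convergence of $J_\varepsilon(\alpha)$ implies convergence in probability, and that multiplication by the fixed, almost surely finite random variable $F$ preserves convergence in probability; this already gives $J_\varepsilon(F\alpha)\to F\,J(\alpha)$ in probability, hence the claim if the symmetric integral is read in the in-probability sense. To obtain the conclusion in the exact $L^2$-form in which the integral has been defined above, I would argue by localization: for bounded $F$ the elementary bound $\|F\,(J_\varepsilon(\alpha)-J(\alpha))\|_{L^2(\Omega)}\le\|F\|_\infty\,\|J_\varepsilon(\alpha)-J(\alpha)\|_{L^2(\Omega)}$ yields $L^2$-convergence directly, and the general case then follows by approximating $F$ with the bounded random variables $F_N=F\,\mathbf{1}_{\{|F|\le N\}}$, applying the identity $\int_a^b F_N\alpha_w\,d^\circ B_w=F_N\,J(\alpha)$ and letting $N\to\infty$.

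The only point that requires the slightest care is precisely this last interchange between the $L^2$ and the in-probability modes of convergence for an unbounded $F$; everything else reduces to the factorisation $J_\varepsilon(F\alpha)=F\,J_\varepsilon(\alpha)$, which is why the lemma is labelled as trivially satisfied. In particular, no property of $B$ beyond the assumed existence of the symmetric approximations of $\alpha$ enters the argument.
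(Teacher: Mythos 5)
The paper offers no proof of this lemma; the authors simply declare it ``trivially satisfied,'' so you are on your own here, and your reconstruction correctly isolates the intended one-line idea: since $F$ is independent of the integration variable $w$, it factors out of the Lebesgue integral defining the symmetric approximation, giving $J_\varepsilon(F\alpha)=F\,J_\varepsilon(\alpha)$ almost surely, and one then lets $\varepsilon\to 0$.

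Your remark about the mode of convergence, however, is not just cosmetic caution --- it is the crux of the matter, and the truncation step you propose does not actually close the gap. The fractional integral has been \emph{defined} in the paper as an $L^2(\Omega)$-limit of $J_\varepsilon$, so saying ``$F\alpha$ is integrable'' means precisely that $J_\varepsilon(F\alpha)=F\,J_\varepsilon(\alpha)$ converges in $L^2(\Omega)$. For bounded $F$ your operator-norm bound settles it. For unbounded $F$, though, proving the identity $\int_a^b F_N\alpha_w\,d^\circ B_w=F_N\,J(\alpha)$ for each truncation $F_N$ and sending $N\to\infty$ only gives $F_N\,J(\alpha)\to F\,J(\alpha)$ almost surely; it says nothing about whether the $\varepsilon$-family $F\,J_\varepsilon(\alpha)$ converges in $L^2$, which is what integrability of $F\alpha$ requires. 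Indeed, if $F\notin L^2(\Omega)$ the candidate limit $F\,J(\alpha)$ need not even lie in $L^2$, so the lemma as literally stated (arbitrary random variable $F$, $L^2$-limit) is false. What rescues it in this paper is either of two implicit restrictions: one may read $d^\circ B$ in Russo--Vallois' original in-probability (ucp) sense, in which case your factorisation immediately yields the claim; or one may observe that every $F$ to which the lemma is applied in Section~\ref{sec:fbm} is an increment of $B$ or an element of a fixed low-order Wiener chaos, so that both $J_\varepsilon(\alpha)-J(\alpha)$ and $F\cdot(J_\varepsilon(\alpha)-J(\alpha))$ live in chaos of bounded order, where all $L^p$-norms are equivalent and $L^2$-convergence of one factor does transfer to the product. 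Either of these would make the lemma genuinely ``trivial''; your truncation route does not, and you should replace it by one of these two observations, or simply add the hypothesis that $F$ is bounded (or in a fixed chaos).
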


We are now ready to show the existence of delayed areas and
volumes with respect to fBm.

\subsection{Delayed Lévy areas}
Before we turn to statements involving increments as functions of two parameters, let us deal first with fixed times $s,t$:
\begin{prop}\label{prop:5.3}
Let $B$ be a $d$-dimensional fractional Brownian motion, with
Hurst parameter $H>1/4$. Then, for $s,t\in\ott$, $v_1\in[-r,r]$,
$v_2\in[0,r]$, such that $v_1+v_2\geq 0$, the doubly delayed
L\'evy area, denoted by $\mathbf{B}_{st}^\mathbf{2}(v_1,v_2)$ and
defined by (\ref{del:terms}), is well defined. In addition, we
have $\mathbb{E}[|\mathbf{B}_{st}^\mathbf{2}(v_1,v_2)|^2]\le c
|t-s|^{4H}$ for a strictly positive constant $c=c_{H,v_1,T}$
independent of $v_2$, exhibiting the following discontinuity phenomenon: we have $\lim_{v_1\to 0}c_{H,v_1,T}=\infty$, but $c_{H,0,T}$ is finnite.
\end{prop}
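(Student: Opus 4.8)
The plan is to reduce the statement to a family of scalar symmetric (Russo--Vallois) integrals and to combine the criterion of Proposition~\ref{integral-rep} with the Skorokhod isometry; I work throughout in the genuinely rough range $1/4<H<1/2$, the cases $H\ge 1/2$ being covered by Young (resp. Stratonovich) integration. Fix $s<t$ in $\ott$, $v_1\in[-r,r]$ and $v_2\in[0,r]$ with $v_1+v_2\ge0$, and put $a=s-v_2$, $b=t-v_2$, so that $b-a=t-s$. Since the symmetric integral of the constant $1$ is the ordinary increment, $(\mathbf{B}^\mathbf{2}_{st}(v_1,v_2))^{ij}=\int_a^b\phi^i_u\,d^\circ B^j_u$ with $\phi^i_u:=B^i_{u-v_1}-B^i_{a-v_1}=B^i(\Ind_{(a-v_1,u-v_1]})$. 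By stationarity of the fBm, the joint law of $(\phi^i,B^i)$ on $[a,b]$ depends only on $b-a=t-s$ and on $v_1$ (not on $v_2$), so every moment of $\mathbf{B}^\mathbf{2}_{st}(v_1,v_2)$ is automatically independent of $v_2$, and it suffices to show that this scalar integral exists and to bound its $L^2$-norm.

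Existence would follow from Proposition~\ref{integral-rep}. First, $\phi^i\Ind_{[a,b]}\in\mathbb{D}^{1,2}(\mathcal{H}_K([a,b]))$: one uses the a.s. $\gamma$-H\"older continuity of $B^i$ for some $\gamma\in(1/2-H,H)$ --- an interval which is nonempty exactly because $H>1/4$ --- together with $\E[|\phi^i_u|^2]=|u-a|^{2H}$, $\E[|\phi^i_r-\phi^i_u|^2]=|r-u|^{2H}$, and the deterministic derivative $D^{B^i}_r\phi^i_u=\Ind_{(a-v_1,u-v_1]}(r)$. Second, one computes the trace. For $i\ne j$ it vanishes (as $\phi^i$ is independent of $B^j$), so $\int_a^b\phi^i\,d^\circ B^j=I(\phi^i\Ind_{[a,b]})$. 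For $i=j$, the covariance (\ref{cov-func}) gives $\langle D\phi^i_u,\Ind_{[u-\varepsilon,u+\varepsilon]}\rangle_\ch=\tfrac12\big(|\varepsilon-v_1|^{2H}-|\varepsilon+v_1|^{2H}+|u-a+v_1+\varepsilon|^{2H}-|u-a+v_1-\varepsilon|^{2H}\big)$; dividing by $2\varepsilon$, integrating over $[a,b]$ and letting $\varepsilon\to0$ (justified by a truncation/dominated convergence argument, the singularity $|u-a+v_1|^{2H-1}$ being integrable since $H>0$) yields the deterministic number ${\rm Tr}_{[a,b]}D\phi^i=g_{v_1}(t-s)$, where $g_{v_1}(h):=\tfrac12\big(|h+v_1|^{2H}-|v_1|^{2H}\big)-H\,\mathrm{sgn}(v_1)\,|v_1|^{2H-1}\,h$, so that $\int_a^b\phi^i\,d^\circ B^i=I(\phi^i\Ind_{[a,b]})+g_{v_1}(t-s)$. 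Note the sanity check $g_0(h)=\tfrac12 h^{2H}$, the usual L\'evy-area self-correction.

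For the $L^2$-estimate I would use the isometry $\E[|I(u)|^2]\le\E\|u\|_\ch^2+\E\|Du\|_{\ch\otimes\ch}^2$ and bound both terms by $\mathcal{H}_K$-type norms, which dominate the $\ch$-norm. Since $\|\Ind_{(u-v_1,r-v_1]}\|_\ch=|r-u|^H$, both $\E\|\phi^i\Ind_{[a,b]}\|_{\mathcal{H}_K}^2$ and $\|D(\phi^i\Ind_{[a,b]})\|^2$ are controlled by the same two integrals $\int_a^b(u-a)^{2H}(b-u)^{2H-1}\,du$ and $\int_a^b\big(\int_u^b(r-u)^{2H-3/2}dr\big)^2du$; both converge (the second one precisely because $2H-1/2>0$, i.e. $H>1/4$) and equal a constant $c_H$ times $(b-a)^{4H}=(t-s)^{4H}$ --- a constant depending on $H$ alone, not on $v_1$ or $v_2$. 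Hence $\E[|I(\phi^i\Ind_{[a,b]})|^2]\le c_H(t-s)^{4H}$, and adding $2\,g_{v_1}(t-s)^2$ in the diagonal case and summing over $i,j$ gives $\E[|\mathbf{B}^\mathbf{2}_{st}(v_1,v_2)|^2]\le\big(c_H+c\,\sup_{0<h\le T}h^{-4H}g_{v_1}(h)^2\big)(t-s)^{4H}$. A Taylor expansion of $|h+v_1|^{2H}$ at $h=0$ (for $v_1\ne0$) shows $g_{v_1}(h)=\tfrac12H(2H-1)|v_1|^{2H-2}h^2+o(h^2)=O(h^{2H})$ near $0$, so $c_{H,v_1,T}:=c_H+c\,\sup_{0<h\le T}h^{-4H}g_{v_1}(h)^2<\infty$ and the stated bound holds with a constant independent of $v_2$.

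It remains to exhibit the discontinuity. Since $\E[I(\phi^i\Ind_{[a,b]})]=0$, the cross term vanishes and the diagonal component satisfies $\E[|(\mathbf{B}^\mathbf{2}_{st}(v_1,v_2))^{ii}|^2]\ge g_{v_1}(t-s)^2$; taking $s=0$ and a fixed $t=h_0\in(0,T]$ gives $c_{H,v_1,T}\ge h_0^{-4H}g_{v_1}(h_0)^2$. As $v_1\to0$ the term $H\,\mathrm{sgn}(v_1)|v_1|^{2H-1}h_0$ diverges (because $2H-1<0$) while $\tfrac12(|h_0+v_1|^{2H}-|v_1|^{2H})$ stays bounded, so $g_{v_1}(h_0)^2\to\infty$ and therefore $c_{H,v_1,T}\to\infty$; at $v_1=0$, $g_0\equiv\tfrac12h^{2H}$, so $c_{H,0,T}\le c_H+\tfrac14<\infty$. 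The main obstacle is the first half of the argument: checking the Malliavin regularity that makes Proposition~\ref{integral-rep} applicable (membership in $\mathbb{D}^{1,2}(\mathcal{H}_K)$ and the dominated-convergence justification of the trace limit, both of which are exactly where $H>1/4$ enters) and making sure the isometry estimates are genuinely uniform in $v_1$ and $v_2$, so that the whole $v_1$-dependence and the discontinuity are isolated in the single explicit scalar $g_{v_1}(t-s)$.
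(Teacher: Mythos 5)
Your proposal is correct and follows essentially the same route as the paper: split the symmetric integral defining $\mathbf{B}^\mathbf{2}$ into a Skorokhod term and a trace term via Proposition~\ref{integral-rep}, compute the trace explicitly (your $g_{v_1}$ agrees with the paper's expressions in all three regimes $v_1>0$, $v_1=0$, $v_1<0$), reduce to the interval $[0,t-s]$ by stationarity so that the estimate is automatically $v_2$-free, and bound the Skorokhod term by $c_H|t-s|^{4H}$ with a constant independent of $v_1$ via Malliavin-calculus estimates (this is where $2H-3/2>-1$, i.e.\ $H>1/4$, enters). Two remarks: you phrase the Skorokhod bound through $\E[|I(u)|^2]\le\E\|u\|_{\mathcal{H}}^2+\E\|Du\|_{\mathcal{H}\otimes\mathcal{H}}^2$ together with the $\mathcal{H}_K\hookrightarrow\mathcal{H}$ embedding, while the paper instead computes $D^{B^i}I^{B^i}(\phi)$ via the commutation formula (\ref{deriv-int}) and then applies the duality (\ref{duality}); these are the same estimate in disguise. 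More substantially, you actually \emph{prove} the blow-up $\lim_{v_1\to0}c_{H,v_1,T}=\infty$ by the lower bound $\E|(\mathbf{B}^\mathbf{2}_{st}(v_1,v_2))^{ii}|^2\ge g_{v_1}(t-s)^2$, the Skorokhod part being centered, whereas the paper merely observes that the constant its estimate produces diverges; your lower-bound argument is the cleaner way to make the discontinuity claim rigorous.
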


\begin{rem}
The discontinuity result on $c_{H,v_1,T}$ alluded to above is not a surprise, and had already been observed in \cite{NNT07}.
\end{rem}

\begin{proof}
As mentioned before, the case $H\ge 1/2$ is rather easy to handle, and we thus focus on $1/4<H<1/2$. It should also be mentioned that Lévy areas can be constructed in a similar way to \cite{NNT07}, though an extra attention has to be paid in order to treat irregular cases, when $H$ approaches $1/4$. As a last preliminary remark, observe that, due to the stationarity property of the fBm
we shall work without loss of generality on the interval $[0,t-s]$
instead of $[s-v_2,t-v_2]$ in the sequel, that is,
$\mathbf{B}^\mathbf{2}_{st}(v_1,v_2)$ behaves as
$\mathbf{B}^\mathbf{2}_{0,t-s}(v_1)=\mathbf{B}^\mathbf{2}_{0,t-s}(v_1,0)$.

\smallskip

{\noindent\it 1) Case} $i=j$ {\it and} $v_1\ge 0$. Consider the process
$\phi=(B^i_{\cdot-v_1}-B^i_{-v_1})\Ind_{[0,t-s]}(\cdot)$. When
$v_1\geq 0$, the arguments in \cite[Proposition 5.2]{NNT07} for
$1/3<H<1/2$ also hold for $1/4<H\leq 1/3$. Thus
\begin{equation*}
(\mathbf{B}^\mathbf{2}_{0,t-s}(v_1))^{ii}=I^{B^i}(\phi)+{\rm
Tr}_{[0,t-s]}D^{B^i}\phi,
\end{equation*}
where $I^{B^i}(\phi)$ denotes the divergence integral of $\phi$
with respect to $B^i$ and
$$
{\rm Tr}_{[0,t-s]}D^{B^i}\phi=\left\{
\begin{array}{cl}
\frac{1}{2}(t-s)^{2H}, &\text{if }v_1=0,    \\
-Hv_1^{2H-1}(t-s)+\frac{1}{2}\big((t-s+v_1)^{2H}-v_1^{2H}\big),
&\text{if } v_1>0.
\end{array}
\right.
$$
In addition, one can also prove, as in \cite{NNT07}, that
$$
\mathbb{E}\big|(\mathbf{B}^\mathbf{2}_{0,t-s}(v_1))^{ii}\big|^2\leq
c_{H,v_1}|t-s|^{4H},
$$
for any $v_1\in(0,r]$, where $\lim_{v_1\to 0}c_{H,v_1}=\infty$. On the other hand, the computations above also show that $\mathbb{E}|(\mathbf{B}^\mathbf{2}_{0,t-s}(v_1,0))^{ii}|^2\leq c_{H,v_1} |t-s|^{4H}$.

\smallskip

{\noindent\it 2) Case} $i=j$ {\it and} $v_1< 0$.
When $v_1<0$, we will show that
\begin{equation}\label{eq:70a}
(\mathbf{B}^\mathbf{2}_{0,t-s}(v_1))^{ii}=I^{B^i}(\phi)+{\rm
Tr}_{[0,t-s]}D^{B^i}\phi,
\end{equation}
where now
\begin{equation}
\label{delay-positive} {\rm
Tr}_{[0,t-s]}D^{B^i}\phi=H(-v_1)^{2H-1}(t-s)+\frac{1}{2}\big(|t-s+v_1|^{2H}-(-v_1)^{2H}\big).
\end{equation}
Indeed, notice that
$D_r^{B^i}\phi_u=\Ind_{[-v_1,u-v_1]}(r)\Ind_{[0,t-s]}(u)$ and furthermore, for
$u\in[0,t-s]$ and $\varepsilon \in[0,-v_1]$, one can write
\begin{align*}
&\langle \Ind_{[-v_1,u-v_1]},\Ind_{[u-\varepsilon,u+\varepsilon]}
\rangle_{\mathcal{H}}\\
&=\frac{1}{2}\big(|-v_1+\varepsilon|^{2H}-|-v_1-\varepsilon|^{2H}+|-v_1-u-\varepsilon|^{2H}-|-v_1-u+\varepsilon|^{2H}\big)\\
&=\frac{1}{2}\big((-v_1+\varepsilon)^{2H}-(-v_1-\varepsilon)^{2H}+|-v_1-u-\varepsilon|^{2H}-|-v_1-u+\varepsilon|^{2H}\big).
\end{align*}
Performing now a Taylor expansion in a neighbourhood of
$\varepsilon=0$, we get
\begin{equation*}
(-v_1+\varepsilon)^{2H}-(-v_1-\varepsilon)^{2H}=4H(-v_1)^{2H-1}\varepsilon+o\left(
\varepsilon ^{2}\right).
\end{equation*}
Thus, applying the dominated convergence theorem (details
are left to the reader) we obtain
\begin{equation}\label{eq:71}
\lim_{\ep\to 0}
\int_0^{t-s}\frac{1}{4\varepsilon}\big((-v_1+\varepsilon)^{2H}-(-v_1-\varepsilon)^{2H}\big)du
= H(-v_1)^{2H-1}(t-s).
\end{equation}
Along the same lines, by separating the cases $-v_1\geq t-s$, $0<u<-v_1<t-s$ and
$-v_1\leq u<t-s$, it can also be proved that
\begin{equation}\label{eq:72}
\lim_{\ep\to 0}\int_0^{t-s}\frac{1}{4\varepsilon} \big
(|-v_1-u-\varepsilon|^{2H}-|-v_1-u+\varepsilon|^{2H} \big)du =
\frac{1}{2}\big(|t-s+v_1|^{2H}-(-v_1)^{2H}\big).
\end{equation}
We now obtain (\ref{delay-positive}) by putting together (\ref{eq:71}) and (\ref{eq:72}).

\smallskip

Let us bound now ${\rm Tr}_{[0,t-s]}D^{B^i}\phi$ from expression (\ref{delay-positive}):
in the case $-v_1\geq t-s$, invoking the fact that, for $0<p<1$ and $a\geq
b>0$, the inequality $a^p-b^p\leq (a-b)^p$ holds true, we obtain
\begin{align*}
\big|{\rm
Tr}_{[0,t-s]}D^{B^i}\phi\big|&=H(-v_1)^{2H-1}(t-s)+\frac{1}{2}\big((-v_1)^{2H}-(-v_1-(t-s))^{2H}\big)\\
&\leq
H(t-s)^{2H}+\frac{1}{2}\big((-v_1)^{2H}-((-v_1)^{2H}-(t-s)^{2H})\big)\leq(t-s)^{2H},
\end{align*}
and in the case $-v_1<t-s$, we also have
\begin{align*}
\big|{\rm
Tr}_{[0,t-s]}D^{B^i}\phi\big|&=H(-v_1)^{2H-1}(t-s)+\frac{1}{2}\big|(t-s+v_1)^{2H}-(-v_1)^{2H}\big|\\
&\leq
H(-v_1)^{2H-1}T^{1-2H}(t-s)^{2H}+\frac{1}{2}\big((t-s)^{2H}+(-v_1)^{2H}+(-v_1)^{2H}\big)\\
&\leq \left(H(-v_1)^{2H-1}T^{1-2H}+\frac32\right)(t-s)^{2H}.
\end{align*}
Thus, we have found
\begin{align}
\label{del:bound_trace} \big|{\rm
Tr}_{[0,t-s]}D^{B^i}\phi\big|\leq
\left(H(-v_1)^{2H-1}T^{1-2H}+\frac32\right)(t-s)^{2H},
\quad\mbox{for all}\quad v_1\in[-r,0).
\end{align}

\smallskip

We proceed now to bound the term $I^{B^i}(\phi)$ in (\ref{eq:70a}): owing to (\ref{deriv-int}), we have
\begin{align}
\label{Malliavin-derivative}
D_r^{B^i}I^{B^i}(\phi)&=(B_{r-v_1}^i-B_{-v_1}^i)\Ind_{[0,t-s]}(r)+I^{B^i}\big(\Ind_{[-v_1,\cdot-v_1]}\Ind_{[0,t-s]}(\cdot)\big)\notag\\
&=(B_{r-v_1}^i-B_{-v_1}^i)\Ind_{[0,t-s]}(r)+I^{B^i}\big(\Ind_{[v_1+r,t-s]}(\cdot)\big)\Ind_{[-v_1,t-s-v_1]}(r)\notag\\
&=(B_{r-v_1}^i-B_{-v_1}^i)\Ind_{[0,t-s]}(r)+(B^i_{t-s}-B^{i}_{v_1+r})\Ind_{[-v_1,t-s-v_1]}(r).
\end{align}
Hence, thanks to (\ref{Malliavin-derivative}) and using the same arguments as in
the proof of \cite[Propositon 5.2]{NNT07}, we obtain
\begin{equation}
\label{del:2-term} \mathbb{E}|I^{B^i}(\phi)|^2\leq c_H|t-s|^{4H},
\end{equation}
with a constant $c_H>0$ independent of $v_1$.

\smallskip

Finally, (\ref{del:bound_trace}) and (\ref{del:2-term}) imply
$\mathbb{E}|(\mathbf{B}^\mathbf{2}_{0,t-s}(v_1))^{ii}|^2\leq
c_{H,v_1}|t-s|^{4H}$ for any $v_1\in[-r,0)$, and thus, according to our stationarity argument:
\begin{equation}\label{eq:77}
\mathbb{E}\big|(\mathbf{B}^\mathbf{2}_{st}(v_1,v_2))^{ii}\big|^2\leq
c_{H,v_1}|t-s|^{4H},
\end{equation}
for any $v_1\in[-r,0)$ and $v_2\in[0,r]$.

\smallskip

{\noindent\it 3) Case $i\neq j$.} This case can be treated similarly to \cite[Proposition 5.2]{NNT07}, and yields the same kind of inequality as in equation (\ref{eq:77}).

\smallskip

Our claim $\mathbb{E}[|\mathbf{B}_{st}^\mathbf{2}(v_1,v_2)|^2]\le
c |t-s|^{4H}$ now stems easily from the inequalities we have
obtained for the 3 cases $i=j$ and $v_1\ge 0$, $i=j$
and $v_1< 0$, and $i\ne j$.

\end{proof}

We can go one step further, and state a result concerning $\mathbf{B^{2}}$ as an increment.
\begin{prop}\label{prop:5.4}
Let $\mathbf{B^{2}}$ be the increment defined at Proposition \ref{prop:5.3}. Then $\mathbf{B^{2}}$ satisfies Hypothesis \ref{del:h1.1} and \ref{del:h1.2}.
\end{prop}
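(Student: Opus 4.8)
The plan is to verify the three substantive requirements on the area increment contained in Hypotheses \ref{del:h1.1} and \ref{del:h1.2}: the H\"older regularity $\mathbf{B}^\mathbf{2}(v'',v)\in\mathcal{C}_2^{2\gamma}(\R^{d,d})$, the Chen relation $\delta\mathbf{B}^\mathbf{2}(v'',v)=\delta(B(v+v''))\otimes\delta(B(v))$, and the two geometric-type identities of Hypothesis \ref{del:h1.2}; the conditions of Hypothesis \ref{del:h1.1} bearing on the volume element $\mathbf{B}^\mathbf{3}$ are dealt with in the next subsection. For the regularity I would start from the bound $\E[|\mathbf{B}^\mathbf{2}_{st}(v_1,v_2)|^2]\le c\,|t-s|^{4H}$ of Proposition \ref{prop:5.3}. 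Since each component of $\mathbf{B}^\mathbf{2}_{st}(v_1,v_2)$ lies in the second chaos generated by $B$, Gaussian hypercontractivity promotes this to $\E[|\mathbf{B}^\mathbf{2}_{st}(v_1,v_2)|^{2p}]\le c_p\,|t-s|^{4Hp}$ for every integer $p\ge 1$; together with the matching product bound $\E[|(\delta\mathbf{B}^\mathbf{2})_{sut}|^{2p}]\le c_p|u-s|^{2Hp}|t-u|^{2Hp}$ (a consequence of the Chen relation below and of $\E|(\delta B)_{ab}|^{4p}=c_p|b-a|^{4Hp}$), a standard application of the Garsia--Rodemich--Rumsey lemma in the $\mathcal{C}_2$-setting, as in \cite{Gu04,NNRT06}, provides for $p$ large enough a modification of $\mathbf{B}^\mathbf{2}(v'',v)$ which is almost surely an element of $\mathcal{C}_2^{2\gamma}(\R^{d,d})$ for every $\gamma<H$. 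Since $H>1/4$ one then fixes $\gamma\in(1/4,H)$.

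Chen's relation, and the first identity $\mathbf{B}^\mathbf{2}_{st}(v',v)=\mathbf{B}^\mathbf{2}_{s-v,t-v}(v',r_0)$ of Hypothesis \ref{del:h1.2}, are purely algebraic consequences of the definition (\ref{del:terms}) together with the additivity of the Russo--Vallois symmetric integral with respect to its domain and Lemma \ref{lem:5.2}. Rewriting $(\mathbf{B}^\mathbf{2}_{st}(v_1,v_2))^{ij}=\int_{s-v_2}^{t-v_2}(\delta B^i)_{s-v_2-v_1,\,u-v_1}\,d^\circ B^j_u$, a direct computation of $(\delta\mathbf{B}^\mathbf{2})_{swt}=\mathbf{B}^\mathbf{2}_{st}-\mathbf{B}^\mathbf{2}_{sw}-\mathbf{B}^\mathbf{2}_{wt}$ collapses the outer integral to the interval $[w-v_2,t-v_2]$ and reduces the inner increment to the random constant $(\delta B^i)_{s-v_2-v_1,\,w-v_2-v_1}$, which factors out by Lemma \ref{lem:5.2}; one is left with $(\delta B^i)_{s-v_2-v_1,\,w-v_2-v_1}(\delta B^j)_{w-v_2,\,t-v_2}$, which, upon setting $v_1=v''$, $v_2=v$, is exactly the relation of Hypothesis \ref{del:h1.1}. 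The first identity of Hypothesis \ref{del:h1.2} is then immediate from (\ref{del:terms}) once $r_0=0$ is inserted, since its two sides are literally the same iterated symmetric integral.

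The remaining, slightly more delicate, point is the second identity of Hypothesis \ref{del:h1.2}. I would obtain it from the Leibniz rule for the symmetric integral: applied to the processes $f_w=B^i_{w-v}$ and $g_w=B^j_{w-v'}$ it gives $(\delta(fg))_{st}=\int_s^t f\,d^\circ g+\int_s^t g\,d^\circ f$, whence $(\delta f)_{st}(\delta g)_{st}=\int_s^t(\delta f)_{s\cdot}\,d^\circ g+\int_s^t(\delta g)_{s\cdot}\,d^\circ f$. After the substitutions $w\mapsto w-v'$ in the first integral and $w\mapsto w-v$ in the second, these two terms are recognized, via the rewriting of $\mathbf{B}^\mathbf{2}$ used above, as $(\mathbf{B}^\mathbf{2}_{st}(v-v',v'))^{ij}$ and $(\mathbf{B}^\mathbf{2}_{st}(v'-v,v))^{ji}=((\mathbf{B}^\mathbf{2}_{st}(v'-v,v))^{\ast})^{ij}$ respectively, which is precisely $[\delta x(v)]_{st}\otimes[\delta x(v')]_{st}=\mathbf{B}^\mathbf{2}_{st}(v-v',v')+(\mathbf{B}^\mathbf{2}_{st}(v'-v,v))^{\ast}$; all symmetric integrals occurring here converge by Proposition \ref{prop:5.3}.

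The main obstacle is bookkeeping rather than a substantial new difficulty: one must make sure that, in the rougher range $1/4<H\le 1/3$, the second-moment estimate of Proposition \ref{prop:5.3} together with hypercontractivity really feeds the Garsia--Rodemich--Rumsey argument at the level of regularity $2\gamma$ with $\gamma>1/4$ (which works because $4H>1$), and that the manipulations with the symmetric integral --- additivity over subintervals, the Leibniz rule, and the factorization of Lemma \ref{lem:5.2} --- are all legitimate under the mere convergence of the integrals guaranteed by Proposition \ref{prop:5.3}. Once these points are granted, the verification is routine and follows the lines of \cite{NNT07}.
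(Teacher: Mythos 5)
Your proposal follows essentially the same route as the paper: equivalence of $L^p$-norms on the second Wiener chaos promotes the $L^2$ bound of Proposition~\ref{prop:5.3} to all moments, a GRR/Kolmogorov-type argument then yields the $\mathcal{C}_2^{2\gamma}$ modification, Lemma~\ref{lem:5.2} together with additivity of the Russo--Vallois integral gives Chen's relation, and the geometric identity of Hypothesis~\ref{del:h1.2} is obtained by a Leibniz/Fubini manipulation of the symmetric integral. The paper compresses the last verification into the single phrase ``Fubini's theorem for Stratonovich integrals,'' whereas you spell out the equivalent Leibniz-rule computation, but the content and the key lemma (\cite[Corollary 4]{Gu04}, fed by the second-chaos moment bounds) are the same.
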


\begin{proof}

First, we have to ensure the almost sure existence of
$\mathbf{B}^\mathbf{2}_{st}(v_1,v_2)$ for  all $s,t\in\ott$. This
can be done by noticing that $\mathbf{B}^\mathbf{2}_{st}(v_1,v_2)$
is a random variable in the second chaos of the fractional
Brownian motion $B$, on which all $L^p$-norms are equivalent for
$p>1$. Hence we can write:
\begin{equation}
\label{del:bound-2-term}
\mathbb{E}\big|(\mathbf{B}^\mathbf{2}_{st}(v_1,v_2))^{ij}\big|^p\leq
c_{H,v_1,p}|t-s|^{2pH},
\end{equation}
for any $i,j\in\{1,\ldots,d\}$ and $p\geq 2$. With the same kind of calculations, one can also obtain the inequality
$$
\mathbb{E}\big|(\mathbf{B}^\mathbf{2}_{s_2 t_2}(v_1,v_2))^{ij}
-(\mathbf{B}^\mathbf{2}_{s_1 t_1}(v_1,v_2))^{ij}\big|^p\leq
c_{H,v_1,p}\lp |t_2-t_1|^{pH} + |s_2-s_1|^{pH} \rp.
$$
Then, a standard application of Kolmogorov's criterion yields the almost sure definition of the whole family $\{\mathbf{B}^\mathbf{2}_{st}(v_1,v_2); \, s,t\in\ott\}$, and its continuity as a function of $s$ and $t$.

\smallskip

Moreover, a direct application of Lemma \ref{lem:5.2} gives\begin{equation}
\label{del:delta-2-term} \delta
\mathbf{B}^\mathbf{2}(v_1,v_2)=\delta (B(v_2+v_1)) \otimes \delta
(B(v_2)),
\end{equation}
and Fubini's theorem for Stratonovich integrals with respect to $B$ also yield easily Hypothesis \ref{del:h1.2}. Finally, it is readily checked that  $\mathbf{B}^\mathbf{2}(v_1,v_2)\in
\mathcal{C}_2^{2\gamma}(\mathbb{R}^{d,d})$  for any
$1/4<\gamma<H$, $v_1\in[-r,r]$ (separating the case $v_1=0$) and $v_2\in[0,r]$. Indeed, it is sufficient to apply
Corollary 4 in \cite{Gu04} (see also inequality (90) in
\cite{NNT07}), having in mind the bound
(\ref{del:bound-2-term}) and expression
(\ref{del:delta-2-term}).

\end{proof}

\subsection{Delayed volumes}
We study now the term $\mathbf{B}^\mathbf{3}(v_1,v_2)$, starting from a similar statement as in Proposition \ref{prop:5.3}:
\begin{prop}\label{prop:5.5}
Let $B$ be a $d$-dimensional fractional Brownian motion, with
Hurst parameter $H>1/4$. Then, for $s,t\in\ott$, $v_1\in[-r,r]$,
$v_2\in[0,r]$, such that $v_1+v_2\geq 0$, the doubly delayed
volume, denoted by $\mathbf{B}^\mathbf{3}(v_1,v_2)$ and defined by
(\ref{del:terms}), is well defined. In addition, we have
$\mathbb{E}[|\mathbf{B}^\mathbf{3}(v_1,v_2)|^2]\le c |t-s|^{6H}$
for a strictly positive constant $c=c_{H,T,v_1,v_2}$ such that it
goes to $\infty$ if $v_1\to 0$ or $v_2\to 0$, but is also well
defined if $v_1=v_2=0$.
\end{prop}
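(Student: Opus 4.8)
The plan is to reproduce the strategy of Proposition~\ref{prop:5.3} one level higher, constructing $\mathbf{B}^\mathbf{3}$ as an iterated symmetric integral and invoking the Malliavin criterion of Proposition~\ref{integral-rep}. As there, we may restrict to $1/4<H<1/2$, since for $H\ge 1/2$ all integrals in (\ref{del:terms}) reduce to Young (resp.\ Stratonovich) integrals, and by the stationarity property we may work on $[0,t-s]$. The key observation is the elementary identity
$$
(\mathbf{B}^\mathbf{3}_{st}(v_1,v_2))^{ijk}=\int_s^t \phi_w\, d^\circ B^k_w,
\qquad
\phi_w:=(\mathbf{B}^\mathbf{2}_{sw}(v_1,v_2))^{ij},
$$
which follows directly from (\ref{del:terms}): the doubly delayed volume is nothing but the already-constructed doubly delayed area, seen as a function of its right endpoint, integrated against a third copy of $B$.

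The first step is to establish the regularity of the integrand $\phi$. From Proposition~\ref{prop:5.3} and the increment estimates underlying Proposition~\ref{prop:5.4}, one has $\mathbb{E}|\phi_w|^2\le c_{H,v_1,T}|w-s|^{4H}$ together with a refined bound $\mathbb{E}|\phi_w-\phi_{w'}|^2\le c_{H,v_1,T}\big(|w'-s|^{2H}|w-w'|^{2H}+|w-w'|^{4H}\big)$, obtained by splitting $\phi_w-\phi_{w'}$ into a ``frozen coefficient'' increment plus a small delayed area over $[w'-v_2,w-v_2]$. Plugging these into the seminorm defining $\mathcal{H}_K([s,t])$ and using Minkowski's inequality, one checks that $\phi\Ind_{[s,t]}\in\mathbb{D}^{1,2}(\mathcal{H}_K([s,t]))$ with $\mathbb{E}\|\phi\Ind_{[s,t]}\|^2_{\mathcal{H}_K([s,t])}\le c_{H,v_1,T}|t-s|^{6H}$; here the convergence of the weighted integrals $\int_u^t(r-u)^{2H-3/2}\,dr$ and $\int_u^t(r-u)^{3H-3/2}\,dr$ is exactly what forces $H>1/4$. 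The Malliavin differentiability of $\phi_w$ and the explicit form of its derivatives are inherited from the representation $(\mathbf{B}^\mathbf{2}_{sw})^{ij}=I^{B^j}(\psi^{(w)})+{\rm Tr}\,D^{B^j}\psi^{(w)}$ already used in Proposition~\ref{prop:5.3}, together with (\ref{deriv-int}).

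The core of the argument is then the case analysis on the indices $i,j,k$ in the decomposition $(\mathbf{B}^\mathbf{3}_{st})^{ijk}=I^{B^k}(\phi\Ind_{[s,t]})+{\rm Tr}_{[s,t]}D^{B^k}\phi$ provided by Proposition~\ref{integral-rep}. If $k\notin\{i,j\}$ the trace vanishes and $\mathbb{E}|I^{B^k}(\phi\Ind_{[s,t]})|^2=\mathbb{E}\|\phi\Ind_{[s,t]}\|^2_{\mathcal{H}}\le c\,\mathbb{E}\|\phi\Ind_{[s,t]}\|^2_{\mathcal{H}_K}\le c_{H,v_1,T}|t-s|^{6H}$, using the continuous embedding $\mathcal{H}_K\hookrightarrow\mathcal{H}$. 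If $k=j$ (resp.\ $k=i$), one computes $D^{B^k}\phi_w$ explicitly; it is an increment of $B^i$ (resp.\ of $B^j$) supported on an interval lying at distance at least $v_2$ (resp.\ $v_1+v_2$) to the left of $w$, so that ${\rm Tr}_{[s,t]}D^{B^k}\phi$ converges, as $\varepsilon\to0$, to an absolutely convergent integral against the second-difference kernel $c_H|r-w|^{2H-2}$; estimating its $L^2$ norm yields a bound $c_{H,T}\,v_2^{2H-1}|t-s|^{3H}$ (resp.\ $c_{H,T}\,(v_1+v_2)^{2H-1}|t-s|^{3H}$), which (since $2H-1<0$) diverges as the relevant delay tends to $0$. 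The Skorohod term $I^{B^k}(\phi\Ind_{[s,t]})$ is controlled by the isometry $\mathbb{E}|I^{B^k}(\phi\Ind_{[s,t]})|^2=\mathbb{E}\|\phi\Ind_{[s,t]}\|^2_{\mathcal{H}}+\mathbb{E}\int\int D^{B^k}_r\phi_u\,D^{B^k}_u\phi_r\,du\,dr$, the second term again being handled by the $\mathcal{H}_K$-type estimates. Finally, when $v_1=v_2=0$ the object $\mathbf{B}^\mathbf{2}(0,0)$ is the ordinary L\'evy area, for which $c_{H,0,T}<\infty$ by Proposition~\ref{prop:5.3}, and the traces above become their diagonal versions, which are explicit (as in Proposition~\ref{prop:5.3}) and scale like $|t-s|^{3H}$ in $L^2$; hence $c_{H,T,0,0}<\infty$. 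Collecting the three cases and invoking the equivalence of all $L^p$ norms on a fixed Wiener chaos yields $\mathbb{E}[|\mathbf{B}^\mathbf{3}_{st}(v_1,v_2)|^2]\le c_{H,T,v_1,v_2}|t-s|^{6H}$ with the announced discontinuity.

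The main obstacle is the combination of the second and third steps: proving $\phi\Ind_{[s,t]}\in\mathbb{D}^{1,2}(\mathcal{H}_K([s,t]))$ and bounding the Skorohod and trace terms both require fractional-calculus estimates that lie right at the integrability threshold when $H$ is close to $1/4$, and one must keep precise track of the $v_1$- and $v_2$-dependence of every constant. This is precisely the step singled out in the introduction as the main new difficulty with respect to the case $H>1/3$ treated in \cite{NNT07}.
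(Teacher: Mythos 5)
Your proposal follows essentially the same route as the paper: both reduce $\mathbf{B}^\mathbf{3}$ to the symmetric integral of the previously constructed area $\mathbf{B}^\mathbf{2}$ against a third copy of $B$, apply Proposition~\ref{integral-rep} to split into a Skorokhod piece and a trace piece, organize the case analysis by coincidences of the indices $i,j,k$, and control both pieces via the $\mathcal{H}_K$-seminorm and fractional-derivative estimates, with the constraint $H>1/4$ appearing as the integrability threshold. The only differences are organizational: the paper, in the hardest case $i=j=k$, further splits the integrand $\psi=\mathbf{B}^\mathbf{2}_{0,\cdot}$ into its own Skorokhod and trace parts $\psi_1+\psi_2$ and bounds explicit Wiener-integral expressions $Q_1,Q_2,Q_3$, whereas you package the same estimates more compactly through $\mathbb{E}\|\phi\Ind_{[s,t]}\|^2_{\mathcal{H}_K}$ and the Skorokhod isometry; also, the paper's explicit computation (eq.~(\ref{eq:84})) shows that the trace of the level-three integrand in fact only blows up as $v_2\to0$ — the $(v_1+v_2)^{2H-1}$ singularity you mention in the scalar product is integrated away in $u$ — and the $v_1$-divergence comes instead from $I^{B^i}(\psi_2)$ (equivalently, from the $v_1$-dependence of the Proposition~\ref{prop:5.3} constant that enters your $\mathcal{H}_K$ bound), so your trace estimate for $k=i$ is not sharp, but this is only an over-estimate and does not affect the conclusion.
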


\begin{proof}
Here again, we focus on the case $1/4<H<1/2$, and due tho the stationarity property of
the fBm, we shall work without loss of generality on the interval
$[0,t-s]$ instead of $[s,t]$ in the sequel. For notational sake, we will also set $\tau=t-s$ in the remainder of the proof.

\smallskip

{\noindent\it 1) Case} $i=j=k$. Consider the process
$\psi=(\mathbf{B}^\mathbf{2}_{0,\cdot}(v_1,v_2)^{ii})\Ind_{[0,\tau
]}(\cdot)$. We will define
$(\mathbf{B}^{\mathbf{3}}(v_1,v_2))^{iii}$ as $\int_0^{\tau
}\psi_u d^\circ B_u^i$, which amounts to show that
$\psi\in\mathbb{D}^{1,2}(\mathcal{H}_K([0,T]))$ and to compute the
trace of the process $\psi$.

\smallskip

With this aim in mind, let us first compute the Malliavin derivative of $\psi$: it is easily seen that
\begin{align}
\label{del:3.1-term-ijk}
&D_r^{B^i}\psi_u  \\
&=(B^i_{r-v_1}-B^i_{-v_2-v_1})\Ind_{[-v_2,u-v_2]}(r)\Ind_{[0,\tau ]}(u)
+I^{B^i}\big(\Ind_{[-v_2-v_1,\cdot-v_1]}(r)\Ind_{[-v_2,u-v_2]}(\cdot)\big)\Ind_{[0,\tau ]}(u)\notag\\
&=(B^i_{r-v_1}-B^i_{-v_2-v_1})\Ind_{[-v_2,u-v_2]}(r)\Ind_{[0,\tau ]}(u)
+(B^i_{u-v_2}-B^i_{r+v_1})\Ind_{[-v_2-v_1,u-v_2-v_1]}(r)\Ind_{[0,\tau ]}(u). \notag
\end{align}
From this identity, one can check that $\psi\in\mathbb{D}^{1,2}(\mathcal{H}_K([0,T]))$. We will now evaluate $\int_0^{\tau }\psi_u d^\circ B_u^i$ by separating the Skorokhod and the trace term in the symmetric integral.

\smallskip

\noindent
{\it (i) Evaluation of the trace term.}
We start by observing that $D^{B^i}\psi_u$ can also be written as:
\begin{align}
\label{del:3.2-term-ijk}
D_r^{B^i}\psi_u&=I^{B^i}\big(\Ind_{[\cdot+v_1,u-v_2]}(r)\Ind_{[-v_2-v_1,u-v_2-v_1]}(\cdot)\big)\Ind_{[0,\tau ]}(u)\notag\\
&\quad+I^{B^i}\big(\Ind_{[-v_2-v_1,\cdot-v_1]}(r)\Ind_{[-v_2,u-v_2]}(\cdot)\big)\Ind_{[0,\tau ]}(u).
\end{align}
Apply then Fubini's Theorem in order to get
\begin{align}
&\int_0^{\tau }\big\langle D^{B^i}\psi_u,
\Ind_{[u-\varepsilon,u+\varepsilon]}\big\rangle_{\mathcal{H}}du \notag\\
&=\int_{-v_2-v_1}^{\tau -v_2-v_1}\left(\int_{w+v_2+v_1}^{\tau }\langle
\Ind_{[w+v_1,u-v_2]},
\Ind_{[u-\varepsilon,u+\varepsilon]}\rangle_{\mathcal{H}}\,du\right)dB_w^i \label{eq:82}\\
&\quad+\int_{-v_2}^{\tau -v_2}\left(\int_{w+v_2}^{\tau }\langle
\Ind_{[-v_2-v_1,w-v_1]},
\Ind_{[u-\varepsilon,u+\varepsilon]}\rangle_{\mathcal{H}}\,du\right)dB_w^i,
\label{eq:83}
\end{align}
where the last two integrals have to be interpreted in the Wiener sense, and are well-defined according to the criterions in \cite{PT00}.

\smallskip

Let us evaluate the scalar product in (\ref{eq:82}):  for a fixed $v_2>0$, $u\in [w+v_2+v_1,\tau ]$,
$w\in[-v_2-v_1,\tau -v_2-v_1]$ and $\varepsilon\in[0,v_2]$, we can
write
\begin{align*}
&\langle \Ind_{[w+v_1,u-v_2]},
\Ind_{[u-\varepsilon,u+\varepsilon]}\rangle_{\mathcal{H}}\\
&=\frac{1}{2}\big(|-v_2+\varepsilon|^{2H}-|-v_2-\varepsilon|^{2H}
+|w+v_1-u-\varepsilon|^{2H}-|w+v_1-u+\varepsilon|^{2H}\big)\\
&=\frac{1}{2}\big((v_2-\varepsilon)^{2H}-(v_2+\varepsilon)^{2H}+(u-w-v_1+\varepsilon)^{2H}-(u-w-v_1-\varepsilon)^{2H}\big)\\
&=2H\big(-v_2^{2H-1}+(u-w-v_1)^{2H-1}\big)\varepsilon+o(\varepsilon^2).
\end{align*}
If $v_2=0$, one can prove similarly that for $\varepsilon$ small enough,
\begin{align*}
&\langle \Ind_{[w+v_1,u-v_2]},
\Ind_{[u-\varepsilon,u+\varepsilon]}\rangle_{\mathcal{H}}=2H(u-w-v_1)^{2H-1}\varepsilon+o(\varepsilon^2).
\end{align*}
This yields easily the relation
\begin{align*}
&\lim_{\ep\to 0}\frac{1}{2\varepsilon}\langle \Ind_{[w+v_1,u-v_2]},
\Ind_{[u-\varepsilon,u+\varepsilon]}\rangle_{\mathcal{H}} =
\left\{\begin{array}{cl}
H\big(-v_2^{2H-1}+(u-w-v_1)^{2H-1}\big)&\text{ if }v_2>0,    \\
H(u-w-v_1)^{2H-1}&\text{ if }v_2=0.
\end{array}
\right.
\end{align*}
The same kind of elementary arguments work for the scalar product
in expression (\ref{eq:83}), and one obtains:
$$
\lim_{\ep\to 0}\frac{1}{2\varepsilon}\langle
\Ind_{[-v_2-v_1,w-v_1]},
\Ind_{[u-\varepsilon,u+\varepsilon]}\rangle_{\mathcal{H}} =
H\big(-(u-w+v_1)^{2H-1}+(v_2+v_1+u)^{2H-1}\big).
$$
Thus, by an application of the dominated convergence theorem (whose details are left to the reader) we get, for a fixed $v_2>0$,
\begin{align}\label{eq:84}
&{\rm Tr}_{[0,\tau ]}D^{B^i}\psi\\
&=\int_{-v_2-v_1}^{\tau -v_2-v_1}\Big(-Hv_2^{2H-1}(\tau -w-v_2-v_1)+\frac{1}{2}[(\tau -w-v_1)^{2H}-v_2^{2H}]\Big)dB^i_w\notag\\
&+\frac{1}{2}\int_{-v_2}^{\tau -v_2}\Big((v_2+v_1)^{2H}-(\tau
-w+v_1)^{2H} +(\tau +v_2+v_1)^{2H}-(2v_2+v_1+w)^{2H}\Big)dB^i_w,
\notag
\end{align}
and for $v_2=0$, we end up with:
\begin{multline*}
{\rm Tr}_{[0,\tau ]}D^{B^i}\psi
=\frac{1}{2}\int_{-v_1}^{\tau -v_1}(\tau -w-v_1)^{2H}dB^i_w\\
+\frac{1}{2}\int_{0}^{\tau }\Big(v_1^{2H}-(\tau -w+v_1)^{2H}+(\tau
+v_1)^{2H}-(v_1+w)^{2H}\Big)dB^i_w.
\end{multline*}

\smallskip

For the remainder of the paper, the relation $a\lesssim b$ stands for $a\le C b$ with a universal constant $C$. Starting from equation (\ref{eq:84}), let us evaluate ${\rm Tr}_{[0,\tau ]}D^{B^i}\psi$ for $v_2>0$. Observe first that one can write
$
\mathbb{E}[|{\rm Tr}_{[0,\tau ]}D^{B^i}\psi|^2] \lesssim \sum_{l=1}^{4} J_l,
$
where $J_l$ can be decomposed itself as $J_l=\mathbb{E}[|\int_0^{\tau } F_l(w) dB^i_w|^2]$, with
\begin{align*}
&F_1(w)=(\tau -w),\quad F_2(w)= (\tau +v_2+v_1)^{2H}-(v_2+v_1+w)^{2H} \\
&F_3(w)=(v_2+v_1)^{2H}-(\tau -w+v_2+v_1)^{2H}, \quad
F_4(w)=(\tau -w+v_2)^{2H}-v_2^{2H}.
\end{align*}
Thus, thanks to relation (\ref{eq:65b}), we obtain:
$$
J_l=\|F_l\|_{\mathcal{H}([0,\tau ])}^2=
c_H\left\|\mathcal{D}_{-{\tau} }^{1/2-H}F_l\right\|_{L^2([0,\tau ])}^2.
$$
Furthermore, each $F_l$ is a power function, whose fractional
derivative $\mathcal{D}_{-{\tau} }^{1/2-H}F_l$ can be computed
explicitly. It is then easily shown that $\mathbb{E}|{\rm
Tr}_{[0,\tau ]}D^{B^i}\psi|^2 \le c_{H,v_2,T}{\tau} ^{6H}$, where
$c_{H,v_2,T}=c_{H,T}v_2^{2(2H-1)}+c_H$. Analogously, for $v_2=0$,
we get $\mathbb{E}|{\rm Tr}_{[0,\tau ]}D^{B^i}\psi|^2\leq
c_{H}{\tau} ^{6H}$.

\smallskip

\noindent {\it (ii) Evaluation of the Skorokhod term.} We shall
prove that $\mathbb{E}|I^{B^i}(\psi)|^2\leq c_{H,v_1,T}{\tau}
^{6H}$ and to this aim, let us decompose $\psi$ into its Skorokhod
and trace part. This gives $\mathbb{E}|I^{B^i}(\psi)|^2\leq
2\mathbb{E}|I^{B^i}(\psi_1)|^2+2\mathbb{E}|I^{B^i}(\psi_2)|^2$,
where
\begin{eqnarray*}
\psi_1(w)&=&\int_{-v_2}^{w-v_2}[B^i_{u-v_1}-B^i_{-v_2-v_1}]dB_u^i  \\
\psi_2(w)&=&{\rm Tr}_{[0,w]} D^{B^i}\phi,
\quad\mbox{with}\quad
\phi=(B^i_{\cdot-v_1}-B^i_{-v_2-v_1})\Ind_{[-v_2,w-v_2]}(\cdot).
\end{eqnarray*}
The proof that
\begin{align*}
\mathbb{E}|I^{B^i}(\psi_2)|^2\leq c_{H,v_1,T}{\tau} ^{6H},
\end{align*}
where $c_{H,v_1,T}\to \infty$ if $v_1\to 0$ but is also well
defined if $v_1=0$, can be obtained using the same arguments as
for Step (i), and we then concentrate on the Skorokhod term
$I^{B^i}(\psi_1)$.

\smallskip

To estimate $\mathbb{E}|I^{B^i}(\psi_1)|^2$, we use  first identity (\ref{duality}), which can be read here as  $\mathbb{E}|I^{B^i}(\psi_1)|^2$ $=\mathbb{E}[\langle \psi_1,\,  D^{B^i}I^{B^i}(\psi_1)\rangle_{\ch}]$. Taking into account relation (\ref{deriv-int}), the expression (\ref{del:3.1-term-ijk}) we have obtained for $D^{B^i}\psi_1$, and the isomorphism (\ref{eq:65b}), we end up with
\begin{equation}\label{eq:86}
\mathbb{E}\lc |I^{B^i}(\psi_1)|^2\rc \lesssim Q_1+Q_2+Q_3,
\end{equation}
where $Q_1,Q_2,Q_3$ are respectively defined by:
\begin{eqnarray*}
Q_1&=&\mathbb{E}\left\|\mathcal{D}_{-{\tau} }^{1/2-H}\psi_1\right\|_{L^2([0,\tau ])}^{2}  \\
Q_2&=&\mathbb{E}\left\|\mathcal{D}_{-(\tau -v_2)}^{1/2-H}\left(\int_{\cdot+v_2}^{\tau }[B^i_{\cdot-v_1}-B^i_{-v_2-v_1}]dB^i_w\right)\right\|_{L^2([-v_2,\tau -v_2])}^{2} \\
Q_3&=&\mathbb{E}\left\|\mathcal{D}_{-(\tau -v_2-v_1)}^{1/2-H}\left(\int_{\cdot+v_2+v_1}^{\tau }[B^i_{w-v_2}-B^i_{r+v_1}]dB^i_w\right)\right\|_{L^2([-v_2-v_1,\tau -v_2-v_1])}^{2}.
\end{eqnarray*}

\smallskip

We now estimate those 3 terms separately, starting with $Q_1$:
invoking the very definition (\ref{deriv-frac-int}) of the
fractional derivative $\mathcal{D}_{-{\tau} }^{1/2-H}$, it is
easily seen that $Q_1\lesssim A_1+A_2$, where
\begin{eqnarray*}
A_1&=&
\mathbb{E}\int_{0}^{\tau }\left(\int_{-v_2}^{r-v_2}[B^i_{u-v_1}-B^i_{-v_2-v_1}]dB^i_u\right)^2\frac{1}{(\tau -r)^{1-2H}}dr  \\
A_2&=&
\mathbb{E}\int_0^{\tau }\left(\int_r^{\tau }\frac{\int_{-v_2}^{w-v_2}[B^i_{u-v_1}-B^i_{-v_2-v_1}]dB^i_u-\int_{-v_2}^{r-v_2}[B^i_{u-v_1}-B^i_{-v_2-v_1}]dB^i_u}{(w-r)^{3/2-H}}dw\right)^2dr
\end{eqnarray*}
The term $A_1$ is easily bounded: according to Fubini's theorem and to our previous bounds on $\mathbf{B}^{\mathbf{2}}$, we have
\begin{multline*}
A_1=\int_{0}^{\tau }\mathbb{E}\left(\int_{-v_2}^{r-v_2}[B^i_{u-v_1}-B^i_{-v_2-v_1}]dB^i_u\right)^2\frac{1}{(\tau -r)^{1-2H}}dr  \\
\leq c_H\int_{0}^{\tau }r^{4H}\frac{1}{(\tau -r)^{1-2H}}dr
\leq
c_H{\tau} ^{4H}\int_{0}^{\tau }\frac{1}{(\tau -r)^{1-2H}}dr=\frac{c_H}{2H}{\tau} ^{6H}.
\end{multline*}
The term $A_2$ is a little longer to treat. However, by resorting
to the same kind of tools, one is able to prove that $A_2\leq c_H
{\tau} ^{6H}$, and gathering the estimates on $A_1$ and $A_2$, we
obtain $Q_1\leq c_H {\tau} ^{6H}$ as well. Finally, after some
tedious computations which will be spared to the reader for sake
of conciseness, we obtain the same kind of bound for $Q_2$ and
$Q_3$.

\smallskip

Now one has to reverse our decomposition process: putting together
our estimates on $Q_1,Q_2,Q_3$ and plugging them into
(\ref{eq:86}), we get $\mathbb{E}[ |I^{B^i}(\psi_1)|^2] \leq c_H
{\tau} ^{6H}$, with a constant $c_H > 0$ independent of $v_1,v_2$.
Finally, gathering the bounds on the Skorokhod and the trace term,
one obtains
$\mathbb{E}[|(\mathbf{B}^{\mathbf{3}}(v_1,v_2))^{iii}|^2]\le c
|t-s|^{6H}$.

\smallskip

{\noindent\it 2) Other cases.}
The previous arguments and computations can be simplified to obtain the desired
result for the case $i=k\neq j$ and $j=k\neq i$. The cases $i=j\neq k$ and $i\neq j\neq k$ can be treated by means of Wiener integrals estimations. This finishes the proof of our claim $\mathbb{E}[|\mathbf{B}^{\mathbf{3}}(v_1,v_2)|^2]\le c |t-s|^{6H}$.

\end{proof}

\smallskip

As in the case of delayed Lévy areas, and with exactly the same kind of arguments, one can push forward the analysis in order to deal with $\mathbf{B}^\mathbf{3}$ as an increment:
\begin{thm}
Let $\mathbf{B^{3}}$ be the increment defined at Proposition
\ref{prop:5.5}. Then $\mathbf{B^{3}}$ satisfies Hypothesis
\ref{del:h1.1}. Taking into account Proposition \ref{prop:5.4}, Theorem
\ref{del:t2.1} can thus be applied almost surely to the paths of the
$d$-dimensional fBm with Hurst parameter $H>1/4$.
\end{thm}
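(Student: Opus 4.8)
The plan is to follow closely the argument already used for $\mathbf{B^2}$ in Proposition \ref{prop:5.4}, so I will only describe the main steps. First I would settle the almost sure existence of the whole family $\{\mathbf{B}^\mathbf{3}_{st}(v_1,v_2);\, s,t\in\ott\}$ as a continuous function of $(s,t)$. Since each $\mathbf{B}^\mathbf{3}_{st}(v_1,v_2)$ lives in the third chaos of $B$, on which all $L^p$-norms are equivalent, the $L^2$-bound obtained in Proposition \ref{prop:5.5} upgrades to $\mathbb{E}[|(\mathbf{B}^\mathbf{3}_{st}(v_1,v_2))^{ijk}|^p]\le c_{H,v_1,v_2,p}|t-s|^{3pH}$ for every $p\ge 2$; repeating the estimates of Proposition \ref{prop:5.5} on the increments in each time variable yields in the same way a bound of the type $\mathbb{E}[|(\mathbf{B}^\mathbf{3}_{s_2 t_2}(v_1,v_2))^{ijk}-(\mathbf{B}^\mathbf{3}_{s_1 t_1}(v_1,v_2))^{ijk}|^p]\le c_{H,v_1,v_2,p}(|t_2-t_1|^{pH}+|s_2-s_1|^{pH})$. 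A standard application of Kolmogorov's continuity criterion then produces an almost surely continuous version of the family, jointly in $(s,t)$, the degenerate situations $v_1=0$ or $v_2=0$ being treated separately because of the blow-up of the constants pointed out in Proposition \ref{prop:5.5}.

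Next I would check the algebraic (Chen-type) relation of Hypothesis \ref{del:h1.1}. Starting from the definition (\ref{del:terms}) of $\mathbf{B^3}(v_1,v_2)$ as a triple symmetric integral and splitting the outermost integral $\int_s^t d^\circ B_w$ over $[s,u]$ and $[u,t]$, Lemma \ref{lem:5.2} together with the fact that $w\mapsto\mathbf{B}^\mathbf{2}_{sw}(v_1,v_2)$ is an indefinite symmetric integral gives the identity $\delta \mathbf{B}^\mathbf{3}(v'',v)=\mathbf{B^2}(v'',v)\otimes \delta B+\delta (B(v+v'')) \otimes \mathbf{B^2}(v,r_0)$, which is precisely the second Chen relation of Hypothesis \ref{del:h1.1}; at this point one uses the identity $\mathbf{B}^\mathbf{2}_{st}(v',v)=\mathbf{B}^\mathbf{2}_{s-v,t-v}(v',r_0)$ already established in Proposition \ref{prop:5.4} in order to make the delays inside the two double integrals match. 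Once this algebraic identity is available, combining it with the moment estimate of the previous step and invoking Corollary 4 in \cite{Gu04} (used exactly as in the proof of Proposition \ref{prop:5.4}, see also inequality (90) in \cite{NNT07}) shows that $\mathbf{B}^\mathbf{3}(v_1,v_2)\in\mathcal{C}_2^{3\gamma}(\mathbb{R}^{d,d,d})$ for every $1/4<\gamma<H$, any $v_1\in[-r,r]$ and $v_2\in[0,r]$ with $v_1+v_2\ge 0$. This establishes that $\mathbf{B^3}$ satisfies Hypothesis \ref{del:h1.1}.

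Finally I would assemble the pieces: by Proposition \ref{prop:5.4} the delayed area $\mathbf{B^2}$ satisfies Hypotheses \ref{del:h1.1} and \ref{del:h1.2}, and by the above $\mathbf{B^3}$ satisfies Hypothesis \ref{del:h1.1}; hence, for almost every $\omega$, the path $B(\omega)$ equipped with its doubly delayed areas and volumes meets all the structural requirements of Theorem \ref{del:t2.1}. Applying that theorem with any $C^4_b$ coefficient $\sigma:\mathbb{R}^{n,q+1}\to\mathbb{R}^{n,d}$ and any $\xi\in\mathcal{C}_1^{3\gamma}([-r_q,0];\mathbb{R}^n)$ then yields, almost surely, the existence and uniqueness of a solution to the fractional delay equation in $\mathcal{D}_{\kappa,\xi_0,\sigma_r(\xi_0)}([0,T];\mathbb{R}^n)$, as well as the local Lipschitz continuity of the associated It\^o map. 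The main obstacle in this scheme is not the moment estimates, which are routine once Proposition \ref{prop:5.5} is in hand, but the careful bookkeeping of the delays when verifying the Chen relation: the outer increment produces a plain $\delta B$, whereas the inner double integral has to be recognised as $\mathbf{B}^\mathbf{2}(v,r_0)$ rather than $\mathbf{B}^\mathbf{2}(v'',v)$, and matching these shifts correctly — together with the use of Fubini for symmetric integrals and of the identity $\mathbf{B}^\mathbf{2}_{st}(v',v)=\mathbf{B}^\mathbf{2}_{s-v,t-v}(v',r_0)$ — is the delicate point of the computation.
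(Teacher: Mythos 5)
Your proposal is correct and follows essentially the same route the paper takes: the paper gives no explicit proof, stating only that one proceeds ``with exactly the same kind of arguments'' as in Proposition \ref{prop:5.4}, and that is precisely the template you reproduce — upgrading the $L^2$-bound of Proposition \ref{prop:5.5} to all $L^p$ via the equivalence of moments on a fixed Wiener chaos (third chaos here), applying Kolmogorov's criterion for the joint continuity in $(s,t)$, deriving the Chen relation for $\mathbf{B^3}$ from Lemma \ref{lem:5.2} and the already-established Chen relation for $\mathbf{B^2}$, and then invoking Corollary 4 of \cite{Gu04} to get the $\mathcal{C}_2^{3\gamma}$ regularity. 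Your bookkeeping of the delays in the Chen identity is also right: splitting the outer $d^\circ B_w$ integral and decomposing $\mathbf{B}^\mathbf{2}_{sw}(v'',v)$ via $\delta$ makes the inner block $\int_u^{\cdot}(\delta B)_{u-v,w-v}\,d^\circ B_w$ appear, which is exactly $\mathbf{B}^\mathbf{2}(v,r_0)$, giving the asymmetric form $\mathbf{B^2}(v'',v)\otimes\delta B+\delta(B(v+v''))\otimes\mathbf{B^2}(v,r_0)$ demanded by Hypothesis \ref{del:h1.1}. (One very minor remark: that last identification falls out directly from the definitions, so the shift identity $\mathbf{B}^\mathbf{2}_{st}(v',v)=\mathbf{B}^\mathbf{2}_{s-v,t-v}(v',r_0)$ is not strictly required at that point, though it is of course consistent and does no harm to invoke it.)
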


\medskip

\noindent {\bf Acknowledgment:} I. Torrecilla wishes to thank the IECN (Institut {\'E}lie Cartan Nancy) for its warm hospitality during a visit in 2008, which served to settle the basis of the current paper.

\end{document}